\newcommand{\mysection}[1]{\section{#1}
\setcounter{equation}{0}}
\newtheorem{theorem}{Theorem}[section]
\newtheorem{corollary}[theorem]{Corollary}
\newtheorem{lemma}[theorem]{Lemma}
\theoremstyle{definition}
\newtheorem{remark}[theorem]{Remark}
\theoremstyle{definition}
\theoremstyle{definition}
\def\dashint{\operatorname%
{\,\,\text{\bf--}\kern-.98em\DOTSI\intop\ilimits@\!\!}}
\renewcommand{\epsilon}{\varepsilon}
\def\bR{\mathbb{R}}
\def\bH{\mathbb{H}}
\def\bP{\mathbb{P}}
\def\hT{\hat{T}}
\def\ff{\mathfrak{f}}
\def\fg{\mathfrak{g}}
\def\cA{\mathcal{A}}
\def\cB{\mathcal{B}}
\def\cD{\mathcal{D}}
\def\cF{\mathcal{F}}
\def\cG{\mathcal{G}}
\def\cH{\mathcal{H}}
\def\cP{\mathcal{P}}
\def\cL{\mathcal{L}}
\newcommand{\ip}[1]{\left\langle#1\right\rangle}
\newcommand{\set}[1]{\left\{#1\right\}}
\newcommand{\Div}{\operatorname{div}}
\begin{document}
\title[Parabolic and elliptic systems]{Gradient estimates for parabolic and elliptic systems from linear laminates}

\author[H. Dong]{Hongjie Dong}
\address[H. Dong]{Division of Applied Mathematics, Brown University,
182 George Street, Providence, RI 02912, USA}
\email{Hongjie\_Dong@brown.edu}
\thanks{H. Dong was partially supported by the NSF under agreements DMS-0800129 and DMS-1056737.}

\subjclass[2010]{35R05,35J55}

\keywords{Second-order systems, partially H\"older coefficients, partially Dini coefficients}

\begin{abstract}
We establish several gradient estimates for second-order divergence type parabolic and elliptic systems. The coefficients and data are assumed to be H\"older or Dini continuous in the time variable and all but one spatial variables. This type of systems arises from the problems of linearly elastic laminates and composite materials. For the proof, we use Campanato's approach in a novel way. Non-divergence type equations under a similar condition are also discussed.
\end{abstract}

\maketitle

\setcounter{tocdepth}{1}
\tableofcontents

\mysection{Introduction}
                                        \label{secIntro}

We consider second-order divergence type parabolic and elliptic systems with coefficients and data which are irregular in one of spatial directions. This type of systems arises from the problems of linearly elastic laminates and composite materials. We are interested in the local regularity of the gradient of weak solutions to these systems.

Problems of this kind have been studied by many authors; see, for instance, \cite{CKC, BASL, BV, LiVo, LiNi}. 
In \cite{CKC}, Chipot, Kinderlehrer, and Vergara-Caffarelli considered the weak variational formulation of the equilibrium problem of a linear laminates, i.e. a domain in $\bR^d$ consisting of a finite number $M$ of linearly elastic, homogeneous, parallel laminae. They proved that any weak solution $u$ of the uniformly elliptic system
$\Div(A\nabla u)=f$
is actually locally Lipschitz, under the conditions that $f$ is in $H^k(\Omega), k\ge [d/2]$, and the coefficients matrix $A$ are constants in each parallel laminae. They also showed that the $W^{1,\infty}$ norm of $u$ is independent of the number $M$, so that in the limiting case $A$ are allowed to be functions of one direction alone. In \cite{LiVo}, Li and Vogelius considered scalar elliptic equations for a single real function $u$:
$$
\Div(A\nabla u)=f+\Div g
$$
in a domain which consists a finite number $M$ of disjoint sub-domains with $C^{1,\alpha}$ boundaries. This equation models deformations in composite media such as fiber-reinforced materials. The matrix $A$ and data are assumed to be H\"older continuous up to the boundary in each sub-domains, but may have jump discontinuities across the boundaries of the sub-domains. Under these assumptions, the authors derived global $W^{1,\infty}$ and piecewise $C^{1,\delta}$ estimates of the solution $u$ for $\delta\in (0,\frac{\alpha}{d(\alpha+1)}]$. Their results were later extended to elliptic systems for vector-valued function $u$ by Li and Nirenberg \cite{LiNi} under the same conditions, and the range of $\delta$ was also relaxed to $(0,\frac{\alpha}{2(\alpha+1)}]$. The bounds obtained in \cite{LiVo, LiNi}, however, may depend on the number $M$. The corresponding problem for parabolic systems is more complicated. In a forthcoming paper, Li and Li \cite{LiLi} further extend some results in \cite{LiNi} to parabolic systems under an additional assumption that the coefficients and data are at least twice differentiable in $t$. See also \cite{FKNN} for a related result on parabolic systems.

We would like to mention two recent papers \cite{DongSeickK09} and \cite{TianWang} on ``partial Schauder'' estimates. In \cite{DongSeickK09}, Dong and Kim considered both divergence and non-divergence form second-order scalar elliptic and parabolic equations. They proved that if the coefficients and data are H\"older continuous in some directions, then derivatives of solutions in these directions are H\"older continuous in the same directions. By using a different method, Tian and Wang \cite{TianWang} proved similar results for non-divergence form elliptic equations with coefficients and data Dini continuous in some variables. Under certain conditions, their results also extend to second-order fully nonlinear equations. An interesting question is how much regularity one can expect in the ``bad'' directions. In this paper we address this question when the ``bad'' direction is one-dimensional. We note that the proofs in \cite{DongSeickK09} and \cite{TianWang} do not apply to systems since the maximum principle is used in both papers. In the case that the coefficients are regular in all directions, a similar problem was studied long time ago by Fife \cite{Fife}.

In this paper, we are concerned with parabolic and elliptic systems:
\begin{align}
                            \label{parabolic0}
\cP u&:=-u_t+D_\alpha(A^{\alpha\beta}D_\beta u)+D_\alpha(B^{\alpha}u)+\hat B^{\alpha}D_\alpha u+ C u=\Div g + f,\\
\cL u&:=D_\alpha(A^{\alpha\beta}D_\beta u)+D_\alpha(B^{\alpha}u)+\hat B^{\alpha}D_\alpha u+ C u=\Div g + f.\nonumber
\end{align}
The coefficients of $\cP$ and $\cL$ are assumed to be bounded, and the operators are uniformly nondegenerate.
The aim of our paper is to obtain optimal regularity of weak solutions when coefficients are assumed to be regular in the time variable and all but one spatial variables. To be more precise, let us denote a typical point in ${\bR}^{d+1}$ by $z=(t,x)$, where $x=(x^1,\cdots,x^d):=(x',x^d)$ and $z':=(t,x')$. We shall prove that if the coefficients and data are Dini continuous in $z'$, then any weak solution $u$ to \eqref{parabolic0} is locally Lipschitz in all spatial variables, $C^{1/2}$ in $t$, and $D_{x'}u$ and $\hat U:=A^{d\beta}D_\beta u+B^d u-g_d$ are continuous; see Theorem \ref{thm1} below for more precise statement. We also prove that a H\"older regularity assumption in $z'$ on the coefficients and data gives a better regularity of $u$. In particular, $D_{x'} u$ and $\hat U$ are H\"older in all variables; see Theorem \ref{thm2}.

In the special case that the domain consists of $M$ parallel laminate sub-domains as in \cite{CKC} mentioned above, we show that if the coefficients and data are regular in each sub-domain and may have jump discontinuous across the boundaries, then $Du$ is regular in each sub-domain up to the boundary; see Remark \ref{rem5.1} i). Thus our results generalize the aforementioned results in \cite{CKC} by allowing more general coefficients and also deriving optimal $C^{1,\delta}$ estimate for $\delta\in (0,1)$. Unlike \cite{LiVo, LiNi}, we do not impose any restriction on $\delta$, and the bounds of various norms are independent of $M$. However, it should be pointed out that although we allow subdomains to have curved boundaries (see Remark \ref{rem5.1} ii)), the geometry of the domain considered in \cite{LiVo, LiNi} is more general than in the current paper.

Our arguments are quite different from those in \cite{LiVo, LiNi, LiLi} and \cite{DongSeickK09, TianWang}. Let us give a brief description as follows. The proofs below are based on Campanato's approach, which was used previously, for instance, in \cite{Giaq83,Lieberman92}. The main step of Campanato's approach is to show the mean oscillations of $Du$ in balls vanish in certain order as the radii of balls go to zero. However, we are not able to follow this approach in the usual way due to the lack of regularity of $u$ in the $x^d$-direction. To overcome this difficulty, we appeal to a recent result in \cite{DK09} regarding the $L_p$ estimate for systems with partially VMO coefficients. A crucial step in the proof is to deduce from this result some interior H\"older regularity of $D_{x'}u$ and $U:=A^{d\beta}D_\beta u$ for parabolic systems with coefficients depending on $x^d$ alone. We then use some perturbation arguments on $D_{x'}u$ and $U$ together with a certain decomposition of $u$ to get the desired estimates.

By using a similar idea, we also obtain the corresponding results for scalar non-divergence form parabolic equations:
$$
P u:=-u_t+a^{\alpha\beta}D_{\alpha\beta}u+b^\alpha D_\alpha u+c u=f.
$$
We prove that if the coefficients and $f$ are Dini continuous in $z'$, then any solution $u$ of the above equation is $C^1$ in $t$, $C^{1,1}$ in $x$, and $u_t$ and $D_{xx'} u$ are continuous; see Theorem \ref{thm3}. Under the stronger condition that the coefficients and $f$ are H\"older continuous in $z'$, we obtain additionally that $u_t$ and $D_{xx'} u$ are H\"older continuous in all variables; see Theorem \ref{thm4}. These theorems generalize some results in \cite{Fife, DongSeickK09, TianWang} for the Poisson equation. In the case that the domain consists of $M$ parallel laminate sub-domains and the coefficients and $f$ are regular in each sub-domain, we show that the second derivative of $u$ in the $x^d$-direction is also H\"older continuous up to the boundary in each sub-domain.

As mentioned above, some of our estimates rely on recent work about $L_p$-regularity for elliptic and parabolic equations (systems) with leading coefficients VMO in some of the independent variables; see Section \ref{sec3}. This series of work was initiated by Krylov in \cite{Krylov_2005}. For further developments in this direction, we also refer the reader to \cite{KK2, DK09, Dong09a} and references therein.


The organization of this paper is as follows. In Section \ref{sec2}, we state our main
theorems for divergence form systems and introduce some notation. We prove some auxiliary estimates in Section \ref{sec3}. The proofs of main theorems
are given in Section \ref{sec4} and Section \ref{sec5}. Finally we treat non-divergence form scalar equations in Section \ref{sec6}.

\mysection{Notation and main results}
                                            \label{sec2}

We are concerned with parabolic systems
\begin{equation}
                                                \label{parabolic}
\cP u:=-u_t+D_\alpha(A^{\alpha\beta}D_\beta u)+D_\alpha(B^{\alpha}u)+\hat B^{\alpha}D_\alpha u+ C u=\Div g + f,
\end{equation}
where $g=(g_1,g_2,\cdots,g_d)$.
The coefficients $A^{\alpha\beta}$, $B^\alpha$, $\hat B^\alpha$, $C$ are $n \times n$ matrices,
which are bounded by a positive constant $K$, and
the leading coefficient matrices $A^{\alpha\beta}$ are uniformly elliptic with ellipticity constant $\nu$:
$$
\nu|\xi|^2\le A^{\alpha\beta}_{ij}\xi^\alpha_i\xi^\beta_j,\quad |A^{\alpha\beta}|\le \nu^{-1}
$$
for any $\xi=(\xi^\alpha_i)\in \bR^{d\times n}$.
Here
$$
u = (u^1, \cdots, u^n)^{\text{tr}},
\quad
g_{\alpha} = (g^1_\alpha, \cdots, g^n_\alpha)^{\text{tr}},
\quad
f = (f^1,\cdots,f^n)^{\text{tr}}
$$
are (column) vector-valued functions. Throughout the paper, the summation convention over repeated indices is used. We also consider the following elliptic system
\begin{equation}
                                                \label{elliptic}
\cL u:=D_\alpha(A^{\alpha\beta}D_\beta u)+D_\alpha(B^{\alpha}u)+\hat B^{\alpha}D_\alpha u+ C u=\Div g + f.
\end{equation}
In this case $A^{\alpha\beta}$, $B^\alpha$, $\hat B^\alpha$, $C$, $g$, and $f$
are independent of $t$ and satisfy the same conditions as in the parabolic case.
\subsection{Notation}
By $Du=(D_i u)$ and $D^{2}u=(D_{ij}u)$ we mean the gradient and the Hessian matrix
of $u$. On many occasions we need to take these objects relative to only
part of variables. We also use the following notation:
$$
D_tu=u_t,\quad D_{x'}u=u_{x'},\quad
D_{xx'}u=u_{xx'}.
$$

Set
$$
B_r'(x') = \{ y \in \bR^{d-1}: |x'-y'| < r\}, \quad
B_r(x) = \{ y \in \bR^d: |x-y| < r\},
$$
$$
Q_r'(t,x) = (t-r^2,t) \times B_r'(x'),\quad
Q_r(t,x) = (t-r^2,t) \times B_r(x),
$$
and
$$
B_r'=B_r'(0),\quad
B_r = B_r(0),\quad
Q_r'=Q_r'(0,0),\quad
Q_r=Q_r(0,0).
$$

By $N(d,p,\cdots)$ we mean that $N$ is a constant depending only
on the prescribed quantities $d, p,\cdots$.
For a (matrix-valued) function $f(t,x)$ in $\bR^{d+1}$, we set
\begin{equation*}
(f)_{\cD} = \frac{1}{|\cD|} \int_{\cD} f(t,x) \, dx \, dt
= \dashint_{\cD} f(t,x) \, dx \, dt,
\end{equation*}
where $\cD$ is an open subset in $\bR^{d+1}$ and $|\cD|$ is the
$d+1$-dimensional Lebesgue measure of $\cD$.

\subsection{Lebesgue spaces}
For $p\in (1,\infty)$, we denote
\begin{align*}
W_{p}^{1,2}(\cD)&=
\set{u:\,u,u_t,Du,D^2u\in L_{p}(\cD)}.
\end{align*}
We also denote $\bH^{-1}_{p}(\cD)$ to be the space consisting of all functions $u$ satisfying
$$
\inf\set{\|g\|_{L_{p}(\cD)}+\|h\|_{L_{p}(\cD)}\,|\,u=\Div g+h}<\infty.
$$
It is easy to see that $\bH^{-1}_{p}(\cD)$ is a Banach space. Naturally, for any $u\in \bH^{-1}_{p}(\cD)$, we define the norm
\begin{equation*}
\|u\|_{\bH^{-1}_{p}(\cD)}=\inf\set{\|g\|_{L_{p}(\cD)}+\|h\|_{L_{p}(\cD)}\,|\,u=\Div g+h}.
\end{equation*}
We also define
$$
\cH^{1}_{p}(\cD)=
\set{u:\,u,Du \in L_{p}(\cD),u_t\in \bH^{-1}_{p}(\cD)}.
$$
We use the abbreviations $W_{p}^{1,2}=W_{p}^{1,2}(\bR^{d+1})$ and $\cH^{1}_{p}=\cH^{1}_{p}(\bR^{d+1})$, etc.

\subsection{Partially VMO and partially Dini spaces} For a function $u$ in $\cD$, we define  its modulus of continuity $\omega_{u,z'}$ (in the mean) with respect to $z'$ by
\begin{multline*}
\omega_{u,z'}(R)\\=\sup_{\substack{z_0\in \bR^{d+1}\\r\le R}}\left(
|Q_r|^{-2}\int_{Q_r(z_0)\cap \cD}\int_{Q_r(z_0)\cap \cD}|u(t,x',x^d)-u(s,y',x^d)|^2\,dy\,ds\,dx\,dt
\right)^{\frac 1 2}.
\end{multline*}
We say $u$ is partially VMO with respect to $z'$ if $\omega_{u,z'}(R)\to 0$ as $R\to 0$.

We call a continuous increasing function $\omega$  on $\overline{\bR^+}$ a Dini function if $\omega(0)=0$ and for any $t>0$
$$
I[\omega](t):=\int_0^t \omega(s)/s\,ds<\infty.
$$
We say function $u$ in $\cD$ is partially Dini continuous with respect to $z'$ if its modulus of continuity $\omega_{u,z'}$ in $z'$
is a Dini function. In this case, we write $u\in C^{\text{Dini}}_{z'}(\cD)$. Clearly, any function in $C^{\text{Dini}}_{z'}(\cD)$ is partially VMO with respect to $z'$.

In a similar way, in the time independent case we define $\omega_{u,x'}$ and the space $C^{\text{Dini}}_{x'}$.

We note that our definition of Dini continuity is slightly different from the usual definition, where the modulus of continuity is measured in the uniform sense.

\subsection{H\"older spaces}
For $\delta\in (0,1]$, we denote the $C^{\delta/2,\delta}$ semi-norm by
$$
[u]_{\delta/2,\delta;\cD}:=\sup_{\substack{(t,x),(s,y)\in \cD\\ (t,x)\neq (s,y)}}\frac {|u(t,x)-u(s,y)|}{|t-s|^{\delta/2}+|x-y|^\delta},
$$
and the $C^{\delta/2,\delta}$ norm by
$$
|u|_{\delta/2,\delta;\cD}:=[u]_{\delta/2,\delta;\cD}+
|u|_{0;\cD},
$$
where $|u|_{0;\cD}=\sup_{\cD}|u|$. Next we define
\begin{align*}
[u]_{(1+\delta)/2,1+\delta;\cD}&:=
[Du]_{\delta/2,\delta;\cD}+\ip{u}_{1+\delta;\cD},\\
|u|_{(1+\delta)/2,1+\delta;\cD}&:=[u]_{(1+\delta)/2,1+\delta;\cD}+|u|_{0;\cD}+|Du|_{0;\cD},
\end{align*}
where
\[
\ip{u}_{1+\delta;\cD}:=\sup_{\substack{(t,x),(s,x)\in\cD\\ t\neq s}}\frac {|u(t,x)-u(s,x)|}{|t-s|^{(1+\delta)/2}}.
\]
By $C^{(1+\delta)/2,1+\delta}(\cD)$ we denote the set of all bounded measurable functions $u$ for which the derivatives $D u$  are continuous and bounded in $\cD$ and $[u]_{(1+\delta)/2,1+\delta;\cD}<\infty$.

We define a partial H\"older semi-norm with respect to $z'$ as
\[
[u]_{z',\delta/2,\delta;\cD}:=\sup_{\substack{(t,x),(s,y)\in \cD\\x^d=y^d,(t,x)\neq (s,y)}}\frac {|u(t,x)-u(s,y)|}{|t-s|^{\delta/2}+|x-y|^\delta},
\]
and the corresponding norm as
\[
|u|_{z',\delta/2,\delta;\cD}:=[u]_{z',\delta/2,\delta;\cD}+|u|_{0;\cD}.
\]
By $C^{\delta/2,\delta}_{z'}(\cD)$ we denote the set of all bounded measurable functions $u$ on $\cD$ for which $[u]_{z', \delta/2,\delta;\cD}<\infty$. Similarly, we define \[
[u]_{z',(1+\delta)/2,1+\delta;\cD}:=
[D_{x'}u]_{z',\delta/2,\delta;\cD}+\ip{u}_{1+\delta;\cD},
\]
and the space $C_{z'}^{(1+\delta)/2,1+\delta}(\cD)$. In the time-independent case, we also define $[\cdot]_{x',\delta}$, $|\cdot|_{x',\delta}$ and the space $C^\delta_{x'}$ in a similar fashion.

\subsection{Main results}

We state the main results of the paper concerning divergence form parabolic systems. Roughly speaking, the first theorem reads if the coefficients and data are Dini continuous in $z'$, then any weak solution $u$ to \eqref{parabolic} is Lipschitz in all spatial variables and $1/2$-H\"older in $t$.
\begin{theorem}
                            \label{thm1}
Let $A\in C_{z'}^{\text{Dini}}$, $B\in C_{z'}^{\text{Dini}}$, $f\in L_\infty(Q_1)$ and $g\in C_{z'}^{\text{Dini}}(Q_1)$. Assume that $u$ is a weak solution to \eqref{parabolic} in $Q_1$. Then we have $u\in C^{1/2,1}(Q_{1/2})$ and
\begin{equation}
                                    \label{eq4.30}
|u|_{1/2,1;Q_{1/2}}\le N(I[\omega_{g,z'}](1)+|g|_{0;Q_1}+|f|_{0;Q_1}
+\|u\|_{L_2(Q_1)}),
\end{equation}
where $N=N(d,n,\nu,K,\omega_{A,z'},\omega_{B,z'})$. Moreover, $D_{x'}u$ and $\hat U$ are continuous in $\overline{Q_{1/2}}$,
where
$$
\hat U:=A^{d\beta}D_\beta u+B^d u-g_d.
$$
\end{theorem}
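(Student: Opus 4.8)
The plan is to run a Campanato-type argument, but for the pair $(D_{x'}u,\hat U)$ rather than for $Du$ itself: the latter cannot be expected to be continuous, since the coefficients and $g_d$ may jump in the $x^d$-direction, whereas the conormal flux $\hat U$ and $D_{x'}u$ do remain continuous. Since the system is uniformly elliptic, $A^{dd}+(A^{dd})^{\mathrm{tr}}\ge 2\nu$, so $A^{dd}$ is invertible with $|(A^{dd})^{-1}|\le N(\nu)$ and
$$
D_du=(A^{dd})^{-1}\big(\hat U-A^{d\beta'}D_{\beta'}u-B^du+g_d\big);
$$
hence an $L_\infty$ bound for $(D_{x'}u,\hat U)$ gives one for $Du$, i.e. Lipschitz regularity in $x$, and the continuity of $(D_{x'}u,\hat U)$ is the desired conclusion, the $C^{1/2}$ regularity in $t$ being an easy consequence of $Du\in L_\infty$ and the equation (test $u_t=\Div h+f_1$ against a bump of radius $|t-s|^{1/2}$ in $x$). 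The driving ingredient is the auxiliary result of Section~\ref{sec3} for the frozen system $-v_t+D_\alpha(\bar A^{\alpha\beta}(x^d)D_\beta v)+D_\alpha(\bar B^\alpha(x^d)v)=\Div\bar g(x^d)$, whose leading and $B$-coefficients and data depend on $x^d$ only: there $D_{x'}v$ and $U_v:=\bar A^{d\beta}D_\beta v+\bar B^dv-\bar g_d$ are H\"older continuous, with the excess-decay
$$
\Big(\dashint_{Q_\rho}\big(|D_{x'}v-(D_{x'}v)_{Q_\rho}|^2+|U_v-(U_v)_{Q_\rho}|^2\big)\Big)^{1/2}\le N(\rho/r)^{\mu}\Big(\dashint_{Q_r}\big(|D_{x'}v-(D_{x'}v)_{Q_r}|^2+|U_v-(U_v)_{Q_r}|^2\big)\Big)^{1/2}
$$
for $\rho\le r$ and some $\mu=\mu(d,n,\nu)\in(0,1)$. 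The reason is that all $z'$-derivatives of $v$ solve the homogeneous version of the frozen system, while rewriting it as $-v_t+D_{\alpha'}(\bar A^{\alpha'\beta}D_\beta v+\bar B^{\alpha'}v)+D_dU_v=0$ lets one express $D_dU_v$ and $D_dD_{x'}v$ through $z'$-derivatives of $v$ and $(\bar A^{dd})^{-1}$ alone; thus $(D_{x'}v,U_v)$ has all its first spatial derivatives and its $t$-derivative in $L_p$ for every $p$, and the partially-VMO $L_p$-estimates make this quantitative.

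For the preliminaries: by the $L_p$-theory for divergence-form parabolic systems with partially VMO coefficients (Dini continuity in $z'$ implies partial VMO), applied on interior subcylinders, $u\in\cH^1_p(Q_{3/4})$ for every finite $p$, so $u$ is bounded and $Du\in L_p(Q_{3/4})$ for every $p$. Now fix $z_0\in Q_{1/2}$ and a small $r$; let $\bar A^{\alpha\beta},\bar B^\alpha,\bar g_\alpha$ be the $z'$-averages over $Q_r'(z_0)$ (still functions of $x^d$), let $v$ solve the corresponding frozen system on $Q_r(z_0)$ with the same parabolic boundary data as $u$, and set $w:=u-v$. Then $w$ has zero parabolic boundary data and $-w_t+D_\alpha(\bar A^{\alpha\beta}D_\beta w)+D_\alpha(\bar B^\alpha w)=\Div\tilde g+\tilde f$ with $\tilde g_\alpha=(\bar A^{\alpha\beta}-A^{\alpha\beta})D_\beta u+(\bar B^\alpha-B^\alpha)u+g_\alpha-\bar g_\alpha$ and $\tilde f=f-\hat B^\alpha D_\alpha u-Cu$; the energy estimate then yields
$$
\Big(\dashint_{Q_r(z_0)}|Dw|^2\Big)^{1/2}\le N\Big(\dashint_{Q_r(z_0)}|(\bar A-A)Du|^2\Big)^{1/2}+N\big(\omega_{B,z'}(r)+\omega_{g,z'}(r)\big)\|u\|_{L_\infty(Q_{3/4})}+Nr\,C_0,
$$
where $C_0$ collects $\|f\|_{L_\infty(Q_1)}$, $\|u\|_{L_\infty(Q_{3/4})}$ and a local $L_p$-norm of $Du$. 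The one delicate term $\big(\dashint_{Q_r(z_0)}|(\bar A-A)Du|^2\big)^{1/2}$ is estimated by splitting $Du$ at a height $|Du|\le M$ and optimizing in $M$, using $|\bar A-A|\le 2K$, the $z'$-mean-oscillation bound $\omega_{A,z'}(r)$ and the higher integrability of $Du$; this gives a bound $\le N\omega_{A,z'}(r)^{1-2/p_0}\big(\dashint_{Q_r(z_0)}|Du|^{p_0}\big)^{1/p_0}$.

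Writing $D_{x'}u-D_{x'}v=D_{x'}w$ and $\hat U-U_v=\bar A^{d\beta}D_\beta w+\bar B^dw+(A^{d\beta}-\bar A^{d\beta})D_\beta u+(B^d-\bar B^d)u-(g_d-\bar g_d)$, and combining with the frozen-system excess-decay (applied on cylinders centered at $z_0$), one gets for a fixed small $\theta$ (chosen so that $N_0\theta^\mu\le\frac12$, where $N_0$ is the constant above)
$$
\phi(z_0,\theta r)\le\tfrac12\,\phi(z_0,r)+N\,\eta(r)\big(1+\Phi(cr)\big),\qquad \phi(z_0,r):=\Big(\dashint_{Q_r(z_0)}\big(|D_{x'}u-(D_{x'}u)_{Q_r(z_0)}|^2+|\hat U-(\hat U)_{Q_r(z_0)}|^2\big)\Big)^{1/2},
$$
where $\Phi(r):=\sup_{z_0\in Q_{1/2}}\phi(z_0,r)$, $c>1$ is fixed, and $\eta(r)$ is built from $\omega_{A,z'}(r)^{1-2/p_0},\omega_{B,z'}(r),\omega_{g,z'}(r),r$, each tending to $0$; here one uses the identity for $D_du$ together with the $L_p$-estimates (and Sobolev embedding) to bound $\big(\dashint_{Q_r(z_0)}|Du|^{p_0}\big)^{1/p_0}\le N(\Phi(cr)+C_0)$. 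I would first run this as a Morrey-type iteration to conclude that $\Phi(r)$ and the averages $(D_{x'}u)_{Q_r(z_0)},(\hat U)_{Q_r(z_0)}$ stay bounded as $r\to0$; Lebesgue's theorem then gives $D_{x'}u,\hat U\in L_\infty(Q_{1/2})$, hence $Du\in L_\infty(Q_{1/2})$ via the identity for $D_du$, and the $C^{1/2}$ bound in $t$ follows from the equation, which together give \eqref{eq4.30}. Once $Du$ is bounded, $\big(\dashint_{Q_r(z_0)}|(\bar A-A)Du|^2\big)^{1/2}\le\|Du\|_{L_\infty}\,\omega_{A,z'}(r)$, so the error becomes $N\eta(r)$ with $\eta(r)\to0$; the standard iteration lemma upgrades $\Phi(r)\to0$ as $r\to0$, uniformly in $z_0$, and a function with uniformly vanishing mean oscillation on shrinking parabolic cylinders is uniformly continuous, so $D_{x'}u$ and $\hat U$ are continuous on $\overline{Q_{1/2}}$.

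I expect the main obstacle to be the auxiliary result for the frozen system — producing H\"older continuity of $D_{x'}v$ and of the flux $U_v$, rather than of $Dv$, out of the partially-VMO $L_p$-estimates, the bootstrap on $z'$-derivatives, and the ellipticity identity for $D_dv$. A secondary difficulty is arranging the perturbation so that the coefficient-oscillation term $(\bar A-A)Du$ can be absorbed; this is what forces the two-stage scheme, since before $Du$ is known bounded only a sublinear power of $\omega_{A,z'}(r)$ is available — enough for the boundedness of $Du$ but not directly for the continuity of $D_{x'}u$ and $\hat U$.
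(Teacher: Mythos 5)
Your overall strategy --- a Campanato iteration on the pair $(D_{x'}u,\hat U)$, driven by an excess-decay estimate for model systems whose coefficients depend on $x^d$ alone --- is the same as the paper's, but your first stage, which is supposed to produce $|Du|_{0}$, has a genuine gap. First, the inequality $\bigl(\dashint_{Q_r(z_0)}|Du|^{p_0}\bigr)^{1/p_0}\le N(\Phi(cr)+C_0)$ cannot hold as stated: $\Phi$ is a supremum of mean oscillations and says nothing about the size of the averages $(D_{x'}u)_{Q_{cr}(z_0)}$ and $(\hat U)_{Q_{cr}(z_0)}$; to control these (and hence local averages of $Du$, via the identity for $D_du$) one must telescope the oscillations over scales, i.e.\ one needs $\sum_k\Phi(\theta^k r_0)$ under control, not merely $\sup_k\Phi(\theta^k r_0)<\infty$. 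Second, the error your splitting trick produces is $\omega_{A,z'}(r)^{1-2/p_0}$, and a sublinear power of a Dini function need not be Dini (take $\omega(r)=\log^{-2}(1/r)$), so the dyadic summability that the telescoping requires can fail; the recursion $\Phi(\theta r)\le\tfrac12\Phi(r)+N\eta(r)(1+\Phi(cr))$ then gives at best $\Phi(r)\to0$ with no rate, which does not yield $Du\in L_\infty$. This is precisely the difficulty the paper's Step 1 is built to circumvent: it first assumes smooth coefficients and data so that $|Du|_{0;Q_{3/4}}<\infty$ qualitatively, freezes the coefficients at a point $z_1'$ and averages the resulting estimate over $z_1'$, so that the full (un-powered) Dini modulus $\omega_{A,z'}(R)$ multiplies $|Du|_{0;Q_R(z_0)}$ as in \eqref{eq4.45}; it then telescopes to convert the modulus into $I[\omega_{A,z'}](\tau^{-1}R)$, makes this small by choosing $R_0$, and closes the estimate by the weighted summation over the nested cylinders $Q^{(\ell)}$ in \eqref{eq2.40b}, finally removing the smoothness assumption by approximation. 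Without a substitute for this a priori finiteness and absorption mechanism, your first stage does not close.

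A second, smaller but real, flaw is at the very end: you assert that a function whose mean oscillations over shrinking parabolic cylinders vanish uniformly is uniformly continuous; this is false (VMO does not imply continuity). What your second-stage iteration actually delivers --- since there the error $N\eta(r)$ is built from the full Dini moduli --- is a Dini majorant for the mean oscillation (cf.\ Lemma \ref{lem4.08}), and continuity then follows from the Sperner-type result the paper cites, i.e.\ from convergence of the averages obtained by telescoping; the justification you give would not stand on its own. Note also that the paper's model system in Lemma \ref{lem3.2} carries no lower-order terms: if you freeze $B$ and $g$ into the model operator you must prove the corresponding excess-decay yourself, whereas the paper keeps them as data and removes the $x^d$-dependent part of the flux datum with the explicit one-dimensional correction $u_0(x^d)$.
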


In the next theorem, we show that a partial H\"older regularity assumption on the coefficients and data gives a better regularity of $u$. In particular, the spatial derivatives of $u$ in $x'$ and $\hat U$ are H\"older continuous in all variables.

\begin{theorem}
                            \label{thm2}
Let $\delta\in (0,1)$, $A\in C_{z'}^{\delta/2,\delta}$, $B\in C_{z'}^{\delta/2,\delta}$, $f\in L_\infty(Q_1)$ and $g\in C_{z'}^{\delta/2,\delta}(Q_1)$. Assume that $u$ is a weak solution to \eqref{parabolic} in $Q_1$. Then we have $u\in C^{1/2,1}(Q_{1/2})$, $D_{x'}u,\hat U\in C^{\delta/2,\delta}(Q_{1/2})$ and
\begin{multline}
                                    \label{eq4.20}
|Du|_{0;Q_{1/2}}+\ip{u}_{1+\delta;Q_{1/2}}+
[D_{x'}u]_{\delta/2,\delta;Q_{1/2}}+[\hat U]_{\delta/2,\delta;Q_{1/2}}\\
\le N(|g|_{z',\delta/2,\delta;Q_{1}}+|f|_{0;Q_{1}}
+\|u\|_{L_2(Q_1)}),
\end{multline}
where $N=N(d,n,\delta,\nu,K,[A]_{z',\delta/2,\delta},[B]_{z',\delta/2,\delta})$.
\end{theorem}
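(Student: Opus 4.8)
The plan is to follow Campanato's scheme adapted to the degenerate $x^d$-direction, building on the auxiliary regularity results of Section \ref{sec3}. The key point, as the author indicates, is that $u$ itself is not regular in $x^d$, so one should never try to estimate the mean oscillation of $D_{x^d}u$ directly; instead, one works with the pair $(D_{x'}u, U)$ where $U:=A^{d\beta}D_\beta u$ (or its ``data-corrected'' version $\hat U$), which \emph{is} the right object: for systems with coefficients depending on $x^d$ alone, $(D_{x'}u,U)$ is H\"older continuous in the interior, and $U$ encodes exactly the flux that is continuous across interfaces. First I would freeze the coefficients in $z'$: given a small ball $Q_r(z_0)$, replace $A^{\alpha\beta}(t,x',x^d)$ by $\bar A^{\alpha\beta}(x^d):=$ its average over the $z'$-slice through $z_0$, and similarly for $B^\alpha$, $g$; the $C_{z'}^{\delta/2,\delta}$ hypothesis makes the resulting error terms of size $O(r^\delta)$ in the appropriate averaged norm. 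Decompose $u=w+v$ on $Q_r(z_0)$, where $w$ solves the homogeneous system $-\bar\cP w=0$ with $w=u$ on the parabolic boundary (using the frozen, $x^d$-dependent coefficients), and $v$ absorbs the perturbation; by the $L_p$/energy estimates for partially-VMO systems from Section \ref{sec3}, $v$ is controlled by the freezing error plus $\|f\|_{L_\infty}$ and the oscillation of $g$.

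Next I would use the interior H\"older regularity of $(D_{x'}w, \bar U_w)$ — where $\bar U_w := \bar A^{d\beta}D_\beta w$ — for the frozen-coefficient system; this is the crucial input deduced in Section \ref{sec3} from the partially-VMO $L_p$ theory. It gives, for $\rho\le r/2$, a Campanato-type decay
\begin{equation*}
\int_{Q_\rho(z_0)}|D_{x'}w-(D_{x'}w)_{Q_\rho}|^2 + |\bar U_w-(\bar U_w)_{Q_\rho}|^2
\le N\Big(\frac{\rho}{r}\Big)^{d+2+2\delta_0}\int_{Q_r(z_0)}|D_{x'}w|^2+|\bar U_w|^2
\end{equation*}
for some $\delta_0\in(\delta,1)$ (any exponent below $1$ is available since the frozen coefficients depend on a single variable and the corresponding one-dimensional profiles are smooth). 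Combining this with the perturbation estimate for $v$, and noting that $\bar U - U = O(r^\delta)\cdot(\text{bound on }Du)$ because $A$ is partially $\delta$-H\"older, I obtain an iteration inequality of the form
\begin{equation*}
\phi(\rho)\le N\Big(\frac{\rho}{r}\Big)^{d+2+2\delta_0}\phi(r)+N r^{d+2+2\delta}\,\Phi,
\end{equation*}
where $\phi(r):=\int_{Q_r(z_0)}|D_{x'}u-(D_{x'}u)_{Q_r}|^2+|\hat U-(\hat U)_{Q_r}|^2$ and $\Phi$ is the right-hand side data norm squared together with $\|u\|_{L_2(Q_1)}^2$. A standard iteration lemma (as in \cite{Giaq83, Lieberman92}) then yields $\phi(r)\le N r^{d+2+2\delta}\Phi$, which by Campanato's characterization gives $D_{x'}u,\hat U\in C^{\delta/2,\delta}(Q_{1/2})$ with the stated bound; the bounds on $|Du|_{0;Q_{1/2}}$ and $\ip{u}_{1+\delta;Q_{1/2}}$ follow since $D_{x^d}u$ is algebraically recovered from $\hat U$, $D_{x'}u$, $u$, and $g_d$ via the definition of $\hat U$ and uniform ellipticity (invert $A^{dd}$), and the time-H\"older seminorm $\ip{u}_{1+\delta}$ comes from the equation itself, writing $u_t$ in $\bH^{-1}$ form and using the now-established spatial regularity.

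The main obstacle, and the place where real care is needed, is the interplay between the two scales of regularity in the decomposition: $w$ is regular in $x^d$ only through the nonlinear dependence of $\bar U_w$ on $D_{x^d}w$, so one must phrase the Campanato decay entirely in terms of $(D_{x'}w,\bar U_w)$ and verify that this quantity genuinely controls the oscillation of $D_{x^d}w$ away from interfaces via the uniformly elliptic (in particular invertible) matrix $A^{dd}$. A second subtlety is matching the frozen flux $\bar U_w$ with the true corrected flux $\hat U=A^{d\beta}D_\beta u+B^d u-g_d$: one needs that the difference is $O(r^\delta)$ times a quantity already under control, which relies on $g\in C_{z'}^{\delta/2,\delta}$ and on an a priori $L_2$-bound for $Du$ on $Q_r$ obtained from the Caccioppoli inequality. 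Once these two points are handled cleanly, the iteration is routine; the only remaining bookkeeping is to track how the constant $N$ depends on $[A]_{z',\delta/2,\delta}$ and $[B]_{z',\delta/2,\delta}$ rather than on a modulus, which is automatic since the freezing error is now a genuine power of $r$.
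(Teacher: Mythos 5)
Your treatment of $[D_{x'}u]_{\delta/2,\delta}$ and $[\hat U]_{\delta/2,\delta}$ is essentially the paper's argument (freeze the coefficients in $z'$ keeping the $x^d$-dependence, split off a solution of the frozen homogeneous system, use the interior decay for $(D_{x'}w,\bar U_w)$ coming from the partially VMO $L_p$ theory, iterate and conclude by Campanato), and it is workable modulo two points you should make explicit. First, the freezing error must be measured against $|Du|_{0;Q_R(z_0)}$: to make it of size $R^{(d+2)/2+\delta}$ you need the $L_\infty$ gradient bound of Theorem \ref{thm1} (available here since partial H\"older continuity implies partial Dini continuity); a Caccioppoli $L_2$ bound of $Du$ on $Q_r$, as you propose, is not of the right order in $r$ and the iteration does not close with it. Second, in your frozen-coefficient decay the right-hand side contains $\int(|D_{x'}w|^2+|\bar U_w|^2)$ rather than mean oscillations; to iterate one must first normalize $w$ by subtracting a function of the form $\sum_{\beta<d}x^\beta c_\beta+h(x^d)$ (the paper's $\tilde w$ and $h$), which is a small but necessary step since constants cannot be subtracted from $\bar U_w$ directly.

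The genuine gap is the term $\ip{u}_{1+\delta;Q_{1/2}}$, which you dismiss by saying it ``comes from the equation itself'' once the spatial regularity is known. That argument does not give the exponent $(1+\delta)/2$. The standard device -- compare $u(t,x_0)$ and $u(s,x_0)$ with weighted spatial averages over a ball of radius $\rho=|t-s|^{1/2}$ and use the divergence-form equation to control the time variation of the average -- produces errors $|u(t,x_0)-\int u(t,y)\zeta(y)\,dy|\le N\rho\,|Du|_0$, and improving this to $N\rho^{1+\delta}$ would require a first-order Taylor expansion of $u(t,\cdot)$ in the $x^d$-direction, i.e. H\"older continuity of $D_du$ in $x^d$, which is exactly what is false in this setting (only $\hat U$, not $D_du$, is H\"older across $x^d$); likewise the horizontal fluxes $A^{\alpha\beta}D_\beta u+B^\alpha u-g_\alpha$, $\alpha<d$, have oscillation $O(1)$ in $x^d$, not $O(\rho^{\delta})$, so the flux term in the averaged identity also only yields $C^{1/2}$ in time. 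This is precisely why the paper devotes a separate argument (Subsection 5.2) to $\ip{u}_{1+\delta}$: following an idea of Safonov, one compares $u$ with its partial mollification $\hat u^{\kappa r}$ in $z'$ and with elements of the class $\hat\bP_1$ of functions linear in $x'$ whose coefficients depend on $x^d$, using Lemmas \ref{lem3.1}, \ref{lem3.2} and \ref{lem2.25} and an equivalence-of-seminorms theorem to bound $[u]_{z',(1+\delta)/2,1+\delta}$, which contains $\ip{u}_{1+\delta}$. Without an argument of this type (or a genuine substitute), your proof of \eqref{eq4.20} is incomplete in the $\ip{u}_{1+\delta;Q_{1/2}}$ term.
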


\begin{remark}
                                    \label{rem2.3}
It follows from the definition of $\hat U$ that $D_d u$ is continuous in $z'$ in Theorem \ref{thm1}, and is in $C_{z'}^{\delta/2,\delta}$ in Theorem \ref{thm2}. In other words, $D_d u$ is only discontinuous in the $x^d$ direction, which is a quite intuitive phenomenon from a physical point of view.
\end{remark}

\begin{remark}
The conditions on $f,g$ and $B$ in Theorems \ref{thm1} and \ref{thm2} can be relaxed. From the proofs below, it is easily seen that we only need $f$ to be in some weaker Morrey space (cf. \eqref{eq4.34}). For $g_\alpha,B^\alpha,\alpha=1,\cdots,d-1$, we only require the regularity with respect to the  $x^\alpha$-direction, in which the derivative is taken.
\end{remark}

\begin{remark}
                                        \label{rem5.1}
i) Suppose that $Q_1$ is divided into $M$ laminate sub-domains $\cD_1,\cD_2,\cdots,\cD_M$ by $M-1$ parallel hyperplanes with the common normal direction $(0,\cdots,0,1)$. Under the conditions of Theorem \ref{thm2}, we assume in addition that $A^{d\beta},\beta=1,\cdots,d$, $B^d$ and $g_d$ are in $C^{\delta/2,\delta}(\overline{\cD_i})$ for each $i=1,2,\cdots,M$, but may have jump discontinuities across these hyperplanes. Then we infer that
$$
D_d u=(A^{dd})^{-1}\left(\hat U+g_d-B^d u-\sum_{\beta=1}^{d-1}A^{d\beta}D_\beta u\right)
$$
is also in $C^{\delta/2,\delta}(\overline{\cD_i}\cap Q_{1-\epsilon})$ for any $i=1,\cdots,M$ and $\epsilon\in (0,1/2)$, with the $C^{\delta/2,\delta}$ norms independent of $M$. Similarly, in Theorem \ref{thm1}, if we additionally assume that $A^{d\beta},\beta=1,\cdots,d$, $B^d$ and $g_d$ are piecewise continuous, then $D_d u$ is also continuous in $\overline{\cD_i}\cap Q_{1-\epsilon}$.

ii) Our results can be extended to systems which model composite materials similar to those considered in \cite{LiVo,LiNi}. Let $T>0$, and $\Omega$ be a bounded domain in $\bR^d$ which consists $M$ disjoint subdomains $\Omega_1,\cdots,\Omega_M$ with $C^{1,\delta}$ boundaries. Suppose that any point belongs to the boundaries of at most two of the subdomains. We also assume that $A^{\alpha\beta}$, $B^\alpha$ and $g_\alpha$ are in $C^{\delta/2,\delta}((-T,0]\times \overline{\Omega_i})$ for each $i=1,2,\cdots,M$. For $\epsilon>0$, denote $$
\Omega_\epsilon=\{x\in \Omega:\text{dist}(x,\partial\Omega)>\epsilon\}.
$$
Let $u$ be a weak solution to \eqref{parabolic} in $(-T,0)\times \Omega$.
Then by the standard technique of locally flattening the boundaries, we can apply Theorem \ref{thm2} to obtain that
$$
u\in C^{(1+\delta)/2,1+\delta}((-T+\epsilon,0]\times (\Omega_\epsilon\cap \overline{\Omega_i}))
$$
for any $i=1,\cdots,M$ and $\epsilon\in (0,1/2)$. Similarly, if $\Omega_1,\cdots,\Omega_M$ have $C^{1,\text{Dini}}$ boundaries and $A^{\alpha\beta}$, $B^\alpha$, $g_\alpha$ are Dini continuous in $(-T,0]\times \overline{\Omega_i}$ for each $i=1,2,\cdots,M$, then from Theorem \ref{thm1} we get
$$
u\in C^{1/2,1}((-T+\epsilon,0]\times (\Omega_\epsilon\cap \overline{\Omega_i})),\quad i=1,\cdots,M,
$$
and $Du$ is continuous in $(-T+\epsilon,0]\times (\Omega_\epsilon\cap \overline{\Omega_i})$. However, the bounds of $u$ may also depends on the distances between different inclusions. Therefore, this method does not apply to the case when the boundaries of three subdomains touch at some point. We remark that the estimates obtained in \cite{LiVo, LiNi} (see also recently \cite{LiLi} and \cite{FKNN}) are independent of the distances between inclusions, under a restriction on the range of $\delta$. Estimates of this type are interesting from a physical point of view because, in the elliptic
model of extreme valued conductivity, the gradient blows up as the distance goes to zero.
\end{remark}

Next we state the results for elliptic systems \eqref{elliptic}, which follow immediately from Theorems \ref{thm1} and \ref{thm2} by viewing the solutions to the elliptic systems as a steady state solution to the corresponding parabolic systems.

\begin{corollary}
                            \label{cor1e}
Let $A\in C_{x'}^{\text{Dini}}$, $B\in C_{x'}^{\text{Dini}}$, $f\in L_\infty(B_1)$ and $g\in C_{x'}^{\text{Dini}}(B_1)$. Assume that $u$ is a weak solution to \eqref{elliptic} in $B_1$. Then we have
\begin{equation*}
|Du|_{0;B_{1/2}}\le N(I[\omega_{g,z'}](1)+|g|_{0;B_1}+|f|_{0;B_1}
+\|u\|_{L_2(B_1)}),
\end{equation*}
where $N=N(d,n,\nu,K,\omega_{A,z'},\omega_{B,z'})$. Moreover, $D_{x'}u$ and $\hat U$ are continuous in $\overline{B_{1/2}}$.
\end{corollary}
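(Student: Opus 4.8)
The plan is to reduce Corollary \ref{cor1e} to Theorem \ref{thm1} by regarding $u$ as a steady state, i.e.\ time-independent, solution of the associated parabolic system. Given a weak solution $u$ of \eqref{elliptic} in $B_1$ (so that $u$ and $Du$ lie in $L_2(B_1)$), define $\tilde u(t,x):=u(x)$ on the cylinder $Q_1=(-1,0)\times B_1$. Then $\tilde u$ and $D\tilde u$ are in $L_2(Q_1)$, and $\partial_t\tilde u=0\in\bH^{-1}_2(Q_1)$, so $\tilde u\in\cH^1_2(Q_1)$. Since all the coefficients $A^{\alpha\beta},B^\alpha,\hat B^\alpha,C$ and the data $f,g$ of \eqref{elliptic} are independent of $t$, one checks directly from the definitions that the parabolic identity $\cP\tilde u=\Div g+f$ in $Q_1$ holds: testing against a function $\varphi$ and integrating in $t$ the elliptic weak formulation (and using $\partial_t\tilde u\equiv 0$) reproduces the parabolic weak formulation. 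Hence $\tilde u$ is a weak solution of \eqref{parabolic} in $Q_1$ with the time-independent coefficients and right-hand side inherited from \eqref{elliptic}.

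Next I would verify that the regularity hypotheses pass to the extension without loss. For any $t$-independent function $v$ one has $v(t,x',x^d)-v(s,y',x^d)=v(x',x^d)-v(y',x^d)$, so its mean oscillation over a cylinder $Q_r(z_0)$ equals its mean oscillation over the spatial ball $B_r'(x_0')$; consequently $\omega_{v,z'}=\omega_{v,x'}$. Therefore $A,B\in C^{\text{Dini}}_{z'}(Q_1)$, $g\in C^{\text{Dini}}_{z'}(Q_1)$, $f\in L_\infty(Q_1)$, with $I[\omega_{g,z'}](1)=I[\omega_{g,x'}](1)$, $|g|_{0;Q_1}=|g|_{0;B_1}$, $|f|_{0;Q_1}=|f|_{0;B_1}$, and likewise $\omega_{A,z'}=\omega_{A,x'}$, $\omega_{B,z'}=\omega_{B,x'}$, so the constant $N$ furnished by Theorem \ref{thm1} depends only on the quantities named in Corollary \ref{cor1e}. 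Finally $\|\tilde u\|_{L_2(Q_1)}=\|u\|_{L_2(B_1)}$ because $(-1,0)$ has unit length.

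Then I would apply Theorem \ref{thm1} to $\tilde u$: it gives $\tilde u\in C^{1/2,1}(Q_{1/2})$ with the bound \eqref{eq4.30}, and the continuity of $D_{x'}\tilde u$ and of $\hat U$ (formed from $\tilde u$) on $\overline{Q_{1/2}}$. Restricting to the time slice $t=0$ recovers $u=\tilde u|_{t=0}$, $D_{x'}u=D_{x'}\tilde u|_{t=0}$, and the elliptic $\hat U=A^{d\beta}D_\beta u+B^d u-g_d$ is exactly the restriction of the parabolic $\hat U$ to $t=0$; since $\{0\}\times\overline{B_{1/2}}\subset\overline{Q_{1/2}}$, both $D_{x'}u$ and $\hat U$ are continuous on $\overline{B_{1/2}}$. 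Moreover, because $\tilde u$ is $t$-independent, $|Du|_{0;B_{1/2}}=|D\tilde u|_{0;Q_{1/2}}\le|\tilde u|_{1/2,1;Q_{1/2}}$, which combined with \eqref{eq4.30} and the identifications of norms above yields the asserted estimate. The only steps requiring (routine) care are checking that the constant-in-time extension is genuinely a weak solution in the parabolic sense and that the partial moduli of continuity are unchanged under the extension; neither presents a real obstacle.
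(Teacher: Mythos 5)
Your argument is correct and is exactly the paper's approach: the paper proves Corollaries \ref{cor1e} and \ref{cor2e} solely by the remark that an elliptic weak solution is a steady-state (time-independent) weak solution of the corresponding parabolic system, to which Theorem \ref{thm1} applies. Your write-up simply fills in the routine verifications (weak formulation of the constant-in-time extension, equality of the partial moduli of continuity, restriction to a time slice), all of which check out.
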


\begin{corollary}
                            \label{cor2e}
Let $\delta\in (0,1)$, $A\in C_{x'}^{\delta}$, $B\in C_{x'}^{\delta}$, $f\in L_\infty(B_1)$ and $g\in C_{x'}^{\delta}(B_1)$. Assume that $u$ is a weak solution to \eqref{elliptic} in $B_1$. Then we have
\begin{equation*}
|Du|_{0;B_{1/2}}+
[D_{x'}u]_{\delta;B_{1/2}}+[\hat U]_{\delta;B_{1/2}}\le N(|g|_{x',\delta;B_1}+|f|_{0;B_1}
+\|u\|_{L_2(B_1)}),
\end{equation*}
where $N=N(d,n,\delta,\nu,K,[A]_{x',\delta},[B]_{x',\delta})$.
\end{corollary}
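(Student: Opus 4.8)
The plan is to deduce this directly from Theorem \ref{thm2} by viewing a weak solution $u$ of the elliptic system \eqref{elliptic} in $B_1$ as a time-independent weak solution of the parabolic system \eqref{parabolic} in $Q_1=(-1,0)\times B_1$. First I would check that this reinterpretation is legitimate: since $u_t=0$ and the coefficients $A^{\alpha\beta},B^\alpha,\hat B^\alpha,C$ together with $f,g$ are independent of $t$, the identity $\cL u=\Div g+f$ in $B_1$ upgrades verbatim to $\cP u=-u_t+\cL u=\Div g+f$ in $Q_1$, tested against functions $\varphi\in C_0^\infty(Q_1)$. The hypotheses transfer as well: if $A,B\in C_{x'}^{\delta}$ and $g\in C_{x'}^{\delta}(B_1)$ in the spatial sense, then viewed as functions on $Q_1$ they lie in $C_{z'}^{\delta/2,\delta}$, with $[A]_{z',\delta/2,\delta;Q_1}=[A]_{x',\delta;B_1}$ and likewise for $B$ and $g$ — there is no time dependence, so the $|t-s|^{\delta/2}$ term in the denominator of the partial H\"older seminorm never gets tested. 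Similarly $f\in L_\infty(B_1)$ gives $f\in L_\infty(Q_1)$ with the same norm, and $\|u\|_{L_2(Q_1)}=\|u\|_{L_2(B_1)}$ since $|Q_1|$ in the time direction has length $1$.

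Next I would apply Theorem \ref{thm2} on $Q_1$ to obtain $u\in C^{1/2,1}(Q_{1/2})$, $D_{x'}u,\hat U\in C^{\delta/2,\delta}(Q_{1/2})$, and the estimate \eqref{eq4.20}. Since $u$ and all the relevant objects are independent of $t$, the restriction of everything in \eqref{eq4.20} to the time-slice $\{t=0\}\times B_{1/2}$ gives exactly the assertions on $B_{1/2}$: $|Du|_{0;B_{1/2}}=|Du|_{0;Q_{1/2}}$, and the H\"older seminorms $[D_{x'}u]_{\delta;B_{1/2}}$, $[\hat U]_{\delta;B_{1/2}}$ coincide with the spatial parts of $[D_{x'}u]_{\delta/2,\delta;Q_{1/2}}$, $[\hat U]_{\delta/2,\delta;Q_{1/2}}$ (the time-H\"older parts being zero). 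The term $\ip{u}_{1+\delta;Q_{1/2}}$ vanishes identically for a $t$-independent function, so it simply drops out. Plugging in the identifications of norms from the previous paragraph turns the right-hand side of \eqref{eq4.20} into $N(|g|_{x',\delta;B_1}+|f|_{0;B_1}+\|u\|_{L_2(B_1)})$, and the constant $N$ depends only on $d,n,\delta,\nu,K,[A]_{x',\delta},[B]_{x',\delta}$ as claimed.

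There is essentially no obstacle here; the only point requiring a line of care is confirming that a function constant in $t$ on $Q_1$ genuinely satisfies the parabolic weak formulation (one integrates the test function against $-u_t=0$ and against $\cL u$, with no boundary terms in $t$ because test functions are compactly supported in $(-1,0)$), and that the $L_2(Q_1)$ norm on the right is harmless — it is controlled by $\|u\|_{L_2(B_1)}$ up to the finite factor $|(-1,0)|^{1/2}=1$. Everything else is a matter of matching definitions of the H\"older and partial-H\"older seminorms across the two settings. Hence the corollary follows immediately, as asserted in the text.
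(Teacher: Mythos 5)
Your proposal is correct and follows exactly the paper's route: the corollary is obtained by viewing the weak solution of the elliptic system as a steady-state (time-independent) solution of the parabolic system on $Q_1$ and applying Theorem \ref{thm2}, with the partial H\"older norms in $z'$ reducing to the corresponding norms in $x'$. The extra checks you supply (the weak formulation upgrading to the parabolic one and the matching of seminorms) are the routine verifications the paper leaves implicit.
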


Estimates in the same spirit as in Theorems \ref{thm1} and \ref{thm2} for non-divergence form equations are stated in Section \ref{sec6}.

\mysection{Some auxiliary estimates}
                            \label{sec3}

We will use the following property of Dini functions.
\begin{lemma}
                                                    \label{lem4.08}
Suppose that $\omega$ is a Dini function, and
\begin{equation}
                                    \label{eq4.22}
\tilde \omega(t):=\sum_{k=0}^\infty a^k\left(\omega(b^k t)\chi_{b^kt\le 1}+\omega(1)\chi_{b^kt> 1}\right),
\end{equation}
for some constants $a\in (0,1)$ and $b>1$.
Then $\tilde \omega$ is also a Dini function.
\end{lemma}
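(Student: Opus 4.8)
The plan is to verify the three defining properties of a Dini function for $\tilde\omega$ directly: that it is non-decreasing and continuous on $\overline{\bR^+}$ with $\tilde\omega(0)=0$, and that $I[\tilde\omega](t)<\infty$ for every $t>0$.

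First I would record the elementary structural facts. Since $\omega(0)=0$ and $\omega$ is non-decreasing, each summand $t\mapsto a^k\big(\omega(b^kt)\chi_{b^kt\le1}+\omega(1)\chi_{b^kt>1}\big)$ in \eqref{eq4.22} is non-negative, non-decreasing, continuous (the two pieces match at $t=b^{-k}$, where $\omega(b^kt)=\omega(1)$), vanishes at $t=0$, and is bounded by $a^k\omega(1)$. Hence the series defining $\tilde\omega$ converges uniformly by the Weierstrass $M$-test, so $\tilde\omega$ is continuous and non-decreasing, with $\tilde\omega(0)=0$ and $\tilde\omega(t)\le\omega(1)/(1-a)$ for all $t\ge0$ (in particular $\tilde\omega\ge\omega$ on $[0,1]$, from the $k=0$ term).

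The substance is the finiteness of the Dini integral. Fix $t>0$. Since all terms are non-negative, Tonelli's theorem gives
\begin{equation*}
I[\tilde\omega](t)=\sum_{k=0}^\infty a^k\int_0^t\frac{\omega(b^ks)\chi_{b^ks\le1}+\omega(1)\chi_{b^ks>1}}{s}\,ds.
\end{equation*}
For the $k$-th integral, on the set where $b^ks\le1$ I would substitute $u=b^ks$ (so that $ds/s=du/u$), turning $\int_0^{t\wedge b^{-k}}\omega(b^ks)\,s^{-1}\,ds$ into $I[\omega](b^kt\wedge1)\le I[\omega](1)$; on the remaining set, which is nonempty only when $b^kt>1$, one computes $\int_{b^{-k}}^{t}\omega(1)\,s^{-1}\,ds=\omega(1)\ln(b^kt)=\omega(1)(k\ln b+\ln t)$. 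In either case the $k$-th integral is bounded by $I[\omega](1)+\omega(1)\big(k\ln b+|\ln t|\big)$, so summing against $a^k$ and using $\sum_{k\ge0}a^k=(1-a)^{-1}$ and $\sum_{k\ge0}ka^k=a(1-a)^{-2}$ yields
\begin{equation*}
I[\tilde\omega](t)\le\frac{I[\omega](1)+\omega(1)|\ln t|}{1-a}+\frac{a\,\omega(1)\ln b}{(1-a)^2}<\infty.
\end{equation*}
Since this holds for every $t>0$, $\tilde\omega$ is a Dini function.

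The only mildly delicate point is the factor growing linearly in $k$ produced by the logarithm $\ln(b^kt)=k\ln b+\ln t$ once $b^kt>1$; it is harmless because it is damped by the geometric weight $a^k$, so all one needs is the convergence of $\sum ka^k$. The rest — continuity via uniform convergence, the change of variables $u=b^ks$, and interchanging the sum with the integral by Tonelli — is routine.
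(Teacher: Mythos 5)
Your proof is correct and follows essentially the same route as the paper: check monotonicity, continuity, and $\tilde\omega(0)=0$ termwise, then interchange the sum with the Dini integral (Tonelli/Fubini) and use the substitution $u=b^k s$ to reduce each truncated piece to $I[\omega](1)$, summing geometrically. The only difference is cosmetic: for the tail $\omega(1)\chi_{b^k t>1}$ the paper first bounds its sum by $Nt^\gamma$ (since $a^k\le N t^{\gamma}$ with $\gamma=\log_b(1/a)$ once $b^k t>1$) and then integrates, whereas you integrate it to $\omega(1)\ln(b^k t)$ and control the linear-in-$k$ growth by $\sum_k k a^k<\infty$; both are valid, and yours has the small bonus of an explicit bound $I[\tilde\omega](t)\le\frac{I[\omega](1)+\omega(1)|\ln t|}{1-a}+\frac{a\,\omega(1)\ln b}{(1-a)^2}$.
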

\begin{proof}
Let $\hat \omega(t)=\omega(t)$ for $t\le 1$ and $\hat \omega(t)=\omega(1)$ for $t>1$. Then $\hat \omega$ is bounded, uniformly continuous Dini function. Moreover,
$$
\tilde \omega(t)=\sum_{k=0}^\infty a^k\hat \omega(b^k t).
$$
Then it is easy to see that $\tilde \omega$ is increasing and continuous.
It follows from \eqref{eq4.22} that
$$
\tilde \omega(t)\le \sum_{k=0}^\infty a^k\omega(b^k t)\chi_{b^kt\le 1}+Nt^{\gamma}
$$
for some constants $N$ and $\gamma>0$ depending only on $a,b$ and $\omega_1$. By Fubini's theorem, we also get $\int_0^1 \tilde \omega(t)/t\,dt<\infty$. The lemma is proved.
\end{proof}

We will also need the following energy inequality and a variant of the parabolic Poincar\'e inequality.

\begin{lemma}
                                \label{lem3.6}
Let $B=\hat B=C=0$ and $R>0$. Suppose $v\in \cH^1_2(Q_{R})$ is a weak solution to the equation
\[
\left\{
  \begin{aligned}
    \cP v= \Div g+f \quad & \hbox{in $Q_{R}$;} \\
    v=0 \quad & \hbox{on $\partial_p Q_{R}$,}
  \end{aligned}
\right.
\]
where $f,g\in L_2(Q_R)$. Then we have
$$
\|Dv\|_{L_2(Q_R)}+R^{-1} \|v\|_{L_2(Q_R)}
\le N\|g\|_{L_2(Q_R)}+NR\|f\|_{L_2(Q_R)},
$$
where $N=N(d,n,\nu)>0$.
\end{lemma}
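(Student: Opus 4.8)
The plan is to prove the standard parabolic energy estimate together with a Poincar\'e-type control of $\|v\|_{L_2}$ by $\|Dv\|_{L_2}$, exploiting that $v$ vanishes on the parabolic boundary $\partial_p Q_R$. By the scaling $v(t,x)\mapsto v(R^2 t, Rx)$ which sends $Q_R$ to $Q_1$ and transforms $(g,f)\mapsto (R^{-1}g,\, f)$ appropriately, it suffices to treat $R=1$ and then read off the $R$-dependence by dimensional analysis; alternatively one can carry the $R$ through directly. I will present the direct argument.

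\textbf{Step 1: Testing with $v$.} Since $v\in\cH^1_2(Q_R)$ solves $-v_t+D_\alpha(A^{\alpha\beta}D_\beta v)=\Div g+f$ weakly with zero lateral and initial data, I would use $v$ itself as a test function on the time-slab $Q_R\cap\{t<\tau\}$ for $\tau\in(-R^2,0)$. This yields
\[
\frac12\int_{B_R}|v(\tau,x)|^2\,dx+\int_{-R^2}^{\tau}\!\!\int_{B_R}A^{\alpha\beta}D_\beta v\cdot D_\alpha v
=-\int_{-R^2}^{\tau}\!\!\int_{B_R}\big(g_\alpha\cdot D_\alpha v-f\cdot v\big),
\]
where the boundary terms vanish because $v=0$ on $\partial_p Q_R$ and the term $\hat B^\alpha D_\alpha u + Bu + Cu$ is absent by hypothesis. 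By uniform ellipticity the left-hand quadratic form is bounded below by $\nu\int|Dv|^2$. Applying the Cauchy--Schwarz and Young inequalities to the right side, with a weight $\epsilon$ on the $\|Dv\|$ term absorbed into the left, and handling the $\int f\cdot v$ term via $\|v\|_{L_2}$, gives, after taking $\tau\to 0$,
\[
\sup_{\tau}\int_{B_R}|v(\tau,x)|^2\,dx+\nu\|Dv\|_{L_2(Q_R)}^2
\le N\|g\|_{L_2(Q_R)}^2+N\|f\|_{L_2(Q_R)}\,\|v\|_{L_2(Q_R)}.
\]

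\textbf{Step 2: Poincar\'e inequality to close the estimate.} Because $v(t,\cdot)\in W^{1,2}_0(B_R)$ for a.e.\ $t$, the spatial Poincar\'e inequality gives $\|v(t,\cdot)\|_{L_2(B_R)}\le NR\|Dv(t,\cdot)\|_{L_2(B_R)}$ with $N=N(d)$; integrating in $t$ yields $\|v\|_{L_2(Q_R)}\le NR\|Dv\|_{L_2(Q_R)}$. Substituting this into the $\int f\cdot v$ term of Step 1 and using Young's inequality ($NR\|f\|_{L_2}\|Dv\|_{L_2}\le \tfrac{\nu}{2}\|Dv\|_{L_2}^2+NR^2\|f\|_{L_2}^2$, with the first term absorbed), I obtain $\|Dv\|_{L_2(Q_R)}^2\le N\|g\|_{L_2(Q_R)}^2+NR^2\|f\|_{L_2(Q_R)}^2$, hence $\|Dv\|_{L_2(Q_R)}\le N\|g\|_{L_2(Q_R)}+NR\|f\|_{L_2(Q_R)}$. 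Combining once more with the Poincar\'e bound on $\|v\|_{L_2(Q_R)}$ gives $R^{-1}\|v\|_{L_2(Q_R)}\le N\|Dv\|_{L_2(Q_R)}\le N\|g\|_{L_2(Q_R)}+NR\|f\|_{L_2(Q_R)}$, which is the claimed inequality with $N=N(d,n,\nu)$.

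\textbf{Main obstacle.} The only genuine subtlety is the justification of using $v$ as a test function and of the integration-by-parts in time for a merely $\cH^1_2$ solution whose time derivative lies in $\bH^{-1}_2$: one must work with Steklov averages (or a density argument) to make sense of $\int v_t\cdot v = \tfrac12\,\partial_t\int|v|^2$ and to get the slab identity with the correct sign, then pass to the limit; this is routine but is where all the care goes. Everything else is Cauchy--Schwarz, Young's inequality, and the classical Poincar\'e inequality for $W^{1,2}_0(B_R)$, with the $R$-powers tracked by scaling.
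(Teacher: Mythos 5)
Your proof is correct and follows essentially the same route as the paper's (omitted) argument: test the equation with $v$ itself to get the energy estimate, then use the spatial Poincar\'e inequality $\|v\|_{L_2(Q_R)}\le NR\|Dv\|_{L_2(Q_R)}$ (valid since $v$ vanishes on the lateral boundary) to absorb the $f$-term and control $R^{-1}\|v\|_{L_2(Q_R)}$. The Steklov-average justification you flag is the standard way to make the testing rigorous for $v\in\cH^1_2$, and your tracking of the $R$-powers in the direct argument is accurate.
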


\begin{lemma}
                                        \label{paraPoin}
Let $p \in (1,\infty)$, $r \in (0,\infty)$ and $u \in C^\infty_{\textit{loc}}(\bR^{d+1})$.

i) Suppose $B=\hat B=C=0$, $g,f\in L_{p,\text{loc}}$, and $\cP u=\Div g+f$ in $Q_r$. Then
\begin{equation}
                                            \label{eq22.58}
\int_{Q_r}|u(t,x)-(u)_{Q_r}|^p\,dz\le Nr^{p}\int_{Q_r}(|Du|^p+|g|^p+r^p |f|^p)\,dz,
\end{equation}
where $N=N(d,\nu,p)>0$.

ii) There is a constant $N = N(d,p)$ such that
\begin{equation}
                \label{eq23.06a}
\begin{split}
\int_{Q_r}|Du(t,x)-(Du)_{Q_r}|^p\,dz&\le Nr^p\int_{Q_r}(|D^2u|^p+|u_t|^p)\,dz,\\
\int_{Q_r}|u(t,x)-(u)_{Q_r}-x^\beta(D_\beta u)_{Q_r}|^p\,dz&\le Nr^{2p}\int_{Q_r}(|D^2u|^p+|u_t|^p)\,dz.
\end{split}
\end{equation}
\end{lemma}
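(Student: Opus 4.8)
The plan is to prove both parts by the standard ``if the average estimate failed, rescale and pass to the limit'' compactness argument, or — more elementarily — by a direct application of the usual (elliptic/parabolic) Poincar\'e inequality together with the equation. Since $u\in C^\infty_{\textit{loc}}$, by translation and the parabolic scaling $u(t,x)\mapsto u(r^2t,rx)$ it suffices to treat the case $r=1$; note that under this scaling the coefficients of $\cP$ rescale so that the leading coefficients keep the same ellipticity constant $\nu$, the lower-order coefficients $B,\hat B,C$ are already assumed zero, and $g,f$ transform in the way that makes the claimed powers of $r$ on the right-hand side exactly the scaling-invariant ones. So I would reduce to proving \eqref{eq22.58} and \eqref{eq23.06a} with $r=1$.

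For part ii) there is no equation involved: \eqref{eq23.06a} is purely the parabolic Poincar\'e inequality applied to $Du$ (first line) and a second-order Poincar\'e-type inequality for $u$ (second line). The first inequality is the classical statement that a function on $Q_1$ is controlled in $L_p$ by its full derivative $(D(Du), D_t(Du)) = (D^2u, (Du)_t)$ after subtracting its mean; here one uses that $(Du)_t = D(u_t)$, so $\int_{Q_1}|(Du)_t|^p \le N\int_{Q_1}|u_t|^p$ — actually one needs the slightly stronger fact that subtracting only the spatial-gradient mean suffices, which is the standard parabolic Poincar\'e inequality (see e.g. the references to \cite{Giaq83, Lieberman92}). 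The second line follows from the first by applying it to $w := u - x^\beta (D_\beta u)_{Q_1}$, for which $D_\beta w = D_\beta u - (D_\beta u)_{Q_1}$ and $w_t = u_t$, so $\int_{Q_1}|w - (w)_{Q_1}|^p \le N\int_{Q_1}(|Dw|^p + |w_t|^p) \le N\int_{Q_1}(|D^2u|^p + |u_t|^p)$ after one more application of the first line to bound $\int|Dw - (Dw)_{Q_1}|^p$... in fact a cleaner route is to iterate the first inequality once.

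For part i), the point is to replace $u_t$ in the Poincar\'e inequality by information coming from the equation. Write $\cP u = -u_t + D_\alpha(A^{\alpha\beta}D_\beta u) = \Div g + f$, i.e. $u_t = D_\alpha(A^{\alpha\beta}D_\beta u - g_\alpha) - f = \Div G - f$ where $G_\alpha := A^{\alpha\beta}D_\beta u - g_\alpha$, with $|G| \le N(|Du| + |g|)$. Then $u_t \in \bH^{-1}_p(Q_1)$ with $\|u_t\|_{\bH^{-1}_p(Q_1)} \le N(\|Du\|_{L_p} + \|g\|_{L_p} + \|f\|_{L_p})$. The inequality \eqref{eq22.58} is then exactly the assertion that $\|u - (u)_{Q_1}\|_{L_p(Q_1)}$ is bounded by $\|Du\|_{L_p(Q_1)} + \|u_t\|_{\bH^{-1}_p(Q_1)}$; this is a Poincar\'e inequality on the parabolic cylinder in which the time derivative is only measured in the negative Sobolev norm. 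One proves it by the standard contradiction/compactness argument: if it failed, one would get a sequence $u_k$ with $\|u_k - (u_k)_{Q_1}\|_{L_p} = 1$ but $\|Du_k\|_{L_p} + \|(u_k)_t\|_{\bH^{-1}_p}\to 0$; Rellich-type compactness for the embedding $\{u : u, Du \in L_p,\ u_t \in \bH^{-1}_p\} \hookrightarrow L_p$ on $Q_1$ (an Aubin–Lions type lemma) extracts a subsequence converging in $L_p(Q_1)$ to some $v$ with $Dv = 0$ and $v_t = 0$, hence $v$ constant, hence $\|v - (v)_{Q_1}\|_{L_p} = 0$, contradicting that the limit has $L_p$ norm $1$.

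The main obstacle is the compactness/Aubin–Lions input in part i): one needs the embedding of the anisotropic space $\cH^1_p(Q_1)$ (functions with $u, Du \in L_p$ and $u_t \in \bH^{-1}_p$) into $L_p(Q_1)$ to be compact, which is where the negative-norm control of $u_t$ gets traded for spatial compactness. Everything else — the scaling reduction, the passage from \eqref{eq22.58} with $u_t$ replaced by $\Div G - f$ to the stated form, and all of part ii) — is routine. Alternatively, and perhaps more in the spirit of the rest of the paper, one can avoid abstract compactness entirely: test the equation for $u - (u)_{Q_1}$ against a suitable dual solution, or simply invoke the known $\cH^1_p$ solvability/estimates for $\cP$ with constant (indeed just Laplacian) leading part together with duality — but the compactness proof is the shortest to write down and is the step I would flag as needing the most care.
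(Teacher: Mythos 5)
Your strategy for part i) is sound and genuinely different from the paper's: you rewrite the equation as $u_t=\Div G-f$ with $G_\alpha=A^{\alpha\beta}D_\beta u-g_\alpha$, so that $\|u_t\|_{\bH^{-1}_p(Q_1)}\le N(\nu)\|Du\|_{L_p(Q_1)}+\|g\|_{L_p(Q_1)}+\|f\|_{L_p(Q_1)}$, and then prove the abstract inequality $\|u-(u)_{Q_1}\|_{L_p(Q_1)}\le N(d,p)\bigl(\|Du\|_{L_p(Q_1)}+\|u_t\|_{\bH^{-1}_p(Q_1)}\bigr)$ by an Aubin--Lions/Simon compactness argument; this works, the constant depends only on $(d,p)$ with $\nu$ entering through $G$, and your scaling reduction produces exactly the stated powers of $r$. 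The paper instead simply cites Lemmas 3.1 and 3.2 of \cite{Krylov_2005}, where the inequality is proved directly and quantitatively: one controls the time oscillation of the spatial averages $\int u(t,x)\zeta(x)\,dx$ by integrating $u_t=\Div G-f$ against a spatial cutoff $\zeta$ (integration by parts moves the divergence onto $D\zeta$), and combines this with the spatial Poincar\'e inequality at each time slice; that route avoids any compactness lemma.

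There is, however, a genuine gap in your treatment of part ii). You claim it involves ``no equation'' and follows from the parabolic Poincar\'e inequality applied to $Du$, using ``$(Du)_t=D(u_t)$, so $\int_{Q_1}|(Du)_t|^p\le N\int_{Q_1}|u_t|^p$''. That inequality is false as stated (it asserts an $L_p$ bound on $Du_t$ by $u_t$), and the naive parabolic Poincar\'e inequality applied to $v=D_iu$ produces $r^{2p}\int_{Q_r}|D_iu_t|^p$ on the right-hand side, not $r^p\int_{Q_r}|u_t|^p$; so the first line of \eqref{eq23.06a} is not justified by your argument as written. The repair is precisely the negative-norm mechanism you set up in part i): since $(D_iu)_t=D_i(u_t)=\Div(u_t e_i)$, where $e_i$ is the $i$-th coordinate vector, one has $\|(D_iu)_t\|_{\bH^{-1}_p(Q_1)}\le\|u_t\|_{L_p(Q_1)}$, and applying your $\bH^{-1}_p$-Poincar\'e inequality to $v=D_iu$ yields the first line of \eqref{eq23.06a}. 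The second line then follows, as you indicate, by applying the same inequality (now with the trivial decomposition $w_t=u_t$, i.e.\ $G=0$) to $w=u-x^\beta(D_\beta u)_{Q_r}$ and using the first line to bound $\int_{Q_r}|Dw|^p=\int_{Q_r}|Du-(Du)_{Q_r}|^p$. In short, part ii) is not ``equation-free Poincar\'e for $Du$''; it needs the same integration-by-parts/negative-norm input as part i).
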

\begin{proof}
See, for instance, Lemma 3.1 and Lemma 3.2 of \cite{Krylov_2005}. We remark that by an approximation argument, \eqref{eq22.58} remains valid for $u\in \cH^1_{p,\text{loc}}$, and \eqref{eq23.06a} remains valid for $u\in W^{1,2}_{p,\text{loc}}$.
\end{proof}

In the remaining part of the section, we shall prove some local estimates, which are deduced from the results obtained in \cite{DK09}.
\begin{lemma}
                                            \label{lem3.1}
Let $p\in (1,\infty)$. Assume  $A^{\alpha\beta}$ are partially VMO in $z'$, $u\in \cH^1_p(Q_1)$ and
\begin{equation}
                                        \label{eq3.12}
\cP u=\Div g+f,
\end{equation}
in $Q_1$, where $f,g\in L_p(Q_1)$. Then there exists a constant $N=N(d,n,\nu,K,\omega_{A,z'},p)$ such that
\begin{equation*}
\|u\|_{\cH_p^1(Q_{1/2})}\le N(\|u\|_{L_p(Q_1)}+\|g\|_{L_p(Q_1)}+\|f\|_{L_p(Q_1)}).
\end{equation*}
\end{lemma}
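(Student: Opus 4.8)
The plan is to reduce the interior estimate to a global estimate by localization, and then invoke the $L_p$-regularity theory for systems with partially VMO coefficients from \cite{DK09}. First I would fix a cutoff function $\eta\in C^\infty_0(Q_1)$ with $\eta=1$ on $Q_{1/2}$ and $0\le\eta\le 1$, and compute the equation satisfied by $v:=\eta u$. Using \eqref{eq3.12} and the product rule, $v$ satisfies $\cP v=\Div \tilde g+\tilde f$ in the whole space (after extending by zero), where $\tilde g$ and $\tilde f$ are explicit combinations of $g$, $f$, $u$, $Du$, and the derivatives of $\eta$ and of the lower-order coefficients $B^\alpha,\hat B^\alpha,C$; schematically $\tilde g=\eta g-A^{\alpha\beta}(D_\alpha\eta)u-\eta B^\alpha u$ plus similar terms, and $\tilde f$ collects $\eta f$, the terms $A^{\alpha\beta}D_\alpha\eta\,D_\beta u$, $(D_\alpha\eta)g_\alpha$, $\eta_t u$, the $\hat B^\alpha$ and $C$ contributions, and so on. All of these lie in $L_p(\bR^{d+1})$ with norms controlled by $\|u\|_{L_p(Q_1)}+\|Du\|_{L_p(Q_1)}+\|g\|_{L_p(Q_1)}+\|f\|_{L_p(Q_1)}$, using boundedness of the coefficients by $K$.

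Next I would apply the global solvability/a priori estimate from \cite{DK09} for the operator $\cP$ with leading coefficients $A^{\alpha\beta}$ that are VMO in $z'$ (measurable in $x^d$): since $v$ has compact support and $\cP v=\Div\tilde g+\tilde f$ on $\bR^{d+1}$, one gets
\[
\|v\|_{\cH^1_p}\le N\bigl(\|\tilde g\|_{L_p}+\|\tilde f\|_{L_p}\bigr),
\]
with $N=N(d,n,\nu,K,\omega_{A,z'},p)$; strictly speaking one may need to add a zeroth-order term $\lambda v$ to both sides to ensure invertibility and then absorb the resulting $\lambda\|v\|_{L_p}$ term, which is harmless since $v$ is compactly supported. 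Because $v=u$ on $Q_{1/2}$, the left side dominates $\|u\|_{\cH^1_p(Q_{1/2})}$. Combining with the bound on $\|\tilde g\|_{L_p}+\|\tilde f\|_{L_p}$ yields
\[
\|u\|_{\cH^1_p(Q_{1/2})}\le N\bigl(\|u\|_{L_p(Q_1)}+\|Du\|_{L_p(Q_1)}+\|g\|_{L_p(Q_1)}+\|f\|_{L_p(Q_1)}\bigr).
\]

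The remaining issue is to remove the $\|Du\|_{L_p(Q_1)}$ term on the right-hand side, since the statement only allows $\|u\|_{L_p}$. I expect this to be the main technical point. The standard remedy is an interpolation/iteration argument: run the above localization on a family of shrinking cylinders $Q_{\rho}\subset Q_{\rho'}$ with $1/2\le\rho<\rho'\le 1$, so that the cutoff derivatives produce a factor $(\rho'-\rho)^{-1}$ in front of $\|Du\|_{L_p(Q_{\rho'})}$; then interpolate $\|Du\|_{L_p}$ between $\|u\|_{\cH^1_p}$ (equivalently $\|D u\|_{L_p}$ plus the $\bH^{-1}_p$ norm of $u_t$) and $\|u\|_{L_p}$ with a small constant $\epsilon$, absorb the $\epsilon\|u\|_{\cH^1_p(Q_{\rho'})}$ term, and apply a standard iteration lemma (e.g., the Giaquinta-type filling-hole lemma) to conclude with $Q_{1/2}$ and $Q_1$. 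Alternatively, one can cite the interior version of the estimate directly from \cite{DK09} if it is stated there in this form. Either way the constant retains the stated dependence, and the proof is complete.
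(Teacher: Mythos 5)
Your first step (cutoff, compute the equation for $\eta u$, apply the global estimate of \cite{DK09}) matches the paper's strategy, but the step where you remove $\|Du\|_{L_p(Q_1)}$ from the right-hand side contains a genuine gap. The interpolation inequality you invoke, $\|Du\|_{L_p}\le \epsilon\|u\|_{\cH^1_p}+C(\epsilon)\|u\|_{L_p}$, is false: in the divergence-form setting $Du$ \emph{is} the top-order quantity in the $\cH^1_p$ norm (there is no $D^2u$ to interpolate against, and $u_t$ is only measured in $\bH^{-1}_p$), so highly oscillatory functions of small amplitude violate it. For the same reason the fallback via a Giaquinta-type filling-hole lemma does not close as described: after localization on $Q_\rho\subset Q_{\rho'}$ the gradient term appears with the coefficient $N(\rho'-\rho)^{-1}$, whereas the iteration lemma requires a fixed coefficient $\theta<1$ in front of the term to be absorbed; the needed smallness has to come from somewhere, and your argument does not produce it. Citing an interior estimate ``directly from \cite{DK09}'' is not available either --- Theorem 2.2 there is a global-in-space estimate with a parameter $\lambda$, which is precisely why the paper supplies this proof.

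The paper's fix is exactly to exploit that parameter: you already mention adding $\lambda v$ ``to ensure invertibility,'' but its real role is to generate smallness. The \cite{DK09} estimate reads $\|D(\zeta_k u)\|_{L_p}\le N\|g_k\|_{L_p}+N\lambda_k^{-1/2}\|f_k\|_{L_p}$, and the dangerous commutator term $A^{\alpha\beta}D_\beta u\,D_\alpha\zeta_k$ is placed in $f_k$, so it enters with the factor $\lambda_k^{-1/2}2^k$. Working on the cylinders $Q_k$ with radii $r_k=1-2^{-k}$, choosing $\lambda_k^{1/2}=N_12^{k+2}$, multiplying the $k$-th inequality by $\varepsilon^k$ with $\varepsilon=1/4$, and summing, the series $\sum_k\varepsilon^k\|D(\zeta_k u)\|_{L_p}$ (finite because $u\in\cH^1_p(Q_1)$, so $Du\in L_p(Q_1)$) absorbs the tail, giving $\|Du\|_{L_p(Q_{1/2})}\le N(\|u\|_{L_p(Q_1)}+\|g\|_{L_p(Q_1)}+\|f\|_{L_p(Q_1)})$; the bound on $\|u_t\|_{\bH^{-1}_p(Q_{1/2})}$ then comes from the equation itself. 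If you replace your interpolation step by this $\lambda$-weighted iteration (or any other mechanism producing a genuinely small constant in front of the gradient term), your outline becomes the paper's proof.
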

\begin{proof}
The lemma follows from Theorem 2.2 of \cite{DK09} by a standard localization argument.
For the sake of completeness, we give a proof in the Appendix.
\end{proof}

By using the Sobolev imbedding theorem and a bootstrap argument, we get
\begin{corollary}
                                    \label{cor3.1}
Let $p,q\in (1,\infty)$. Assume $A^{\alpha\beta}$  are partially VMO in $z'$, $u\in C^\infty_{\text{loc}}$ satisfies \eqref{eq3.12} in $Q_1$, where $f,g\in L_q(Q_1)$. Then there exists a constant $N=N(d,n,\nu,K,\omega_{A,z'},p,q)$ such that
\begin{equation*}
\|u\|_{\cH^1_q(Q_{1/2})}\le N(\|u\|_{L_p(Q_1)}+\|g\|_{L_q(Q_1)}+\|f\|_{L_q(Q_1)}).
\end{equation*}
In particular, if $q>d+2$, it holds that
$$
|u|_{\gamma/2,\gamma;Q_{1/2}}\le N(\|u\|_{L_p(Q_1)}+\|g\|_{L_q(Q_1)}+\|f\|_{L_q(Q_1)}),
$$
where $\gamma=1-(d+2)/q$.
\end{corollary}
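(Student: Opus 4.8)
\emph{Proposal.} The plan is to bootstrap Lemma \ref{lem3.1}, raising the integrability exponent at each step via the parabolic Sobolev embedding for the spaces $\cH^1_r$, at the cost of shrinking the cylinder slightly. First I would record a localized form of Lemma \ref{lem3.1}: for concentric cylinders $Q_\rho(z_0)\subset Q_{\rho'}(z_0)$ with $0<\rho<\rho'$, if $\cP v=\Div g+f$ in $Q_{\rho'}(z_0)$ and $v\in\cH^1_r(Q_{\rho'}(z_0))$, then
\[
\|v\|_{\cH^1_r(Q_\rho(z_0))}\le N\big(\|v\|_{L_r(Q_{\rho'}(z_0))}+\|g\|_{L_r(Q_{\rho'}(z_0))}+\|f\|_{L_r(Q_{\rho'}(z_0))}\big),
\]
with $N$ depending in addition on $\rho,\rho'$. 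This follows from Lemma \ref{lem3.1} by translation, parabolic dilation, and a covering argument; crucially, since $\omega_{A,z'}$ is increasing, rescaling to a smaller cylinder does not enlarge the (mean) modulus of continuity of the leading coefficients, so the constant stays under control. Note also that every $v\in C^\infty_{\text{loc}}$ lies in $\cH^1_r$ of each such bounded cylinder, so these applications are all legitimate.

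If $q\le p$ the conclusion is immediate: on the finite-measure set $Q_1$ we have $\|u\|_{L_q(Q_1)}\le N\|u\|_{L_p(Q_1)}$, and the localized Lemma \ref{lem3.1} with exponent $q$ on $Q_1\supset Q_{1/2}$ finishes it. So assume $q>p$, whence $L_q(Q_1)\subset L_s(Q_1)$ for every $s\le q$. Fix a finite chain of exponents $p=p_0<p_1<\dots<p_k=q$ in $(1,\infty)$ along which the anisotropic Sobolev embedding $\cH^1_{p_j}\hookrightarrow L_{p_{j+1}}$ holds on bounded cylinders: one may take $\tfrac1{p_{j+1}}\ge\tfrac1{p_j}-\tfrac1{d+2}$ while $p_j<d+2$, and once some $p_j\ge d+2$ one passes directly to $p_{j+1}=q$ since then $\cH^1_{p_j}$ embeds into $L_s$ for all $s<\infty$ (into $L_\infty$ if $p_j>d+2$); such a chain exists with $k=k(d,p,q)$. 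Fix radii $1=\rho_0>\rho_1>\dots>\rho_k=1/2$. Now iterate: apply the localized Lemma \ref{lem3.1} with exponent $p_0$ on $Q_{\rho_0}\supset Q_{\rho_1}$ to control $\|u\|_{\cH^1_{p_0}(Q_{\rho_1})}$ by $N(\|u\|_{L_p(Q_1)}+\|g\|_{L_q(Q_1)}+\|f\|_{L_q(Q_1)})$; then, for $j=1,\dots,k-1$, first invoke the Sobolev embedding on $Q_{\rho_j}$ to pass from $\cH^1_{p_{j-1}}$-control there to $L_{p_j}$-control there, and then the localized Lemma \ref{lem3.1} with exponent $p_j$ on $Q_{\rho_j}\supset Q_{\rho_{j+1}}$ (using $g,f\in L_q\subset L_{p_j}$) to recover $\cH^1_{p_j}$-control on $Q_{\rho_{j+1}}$. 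Since only $k=k(d,p,q)$ rounds are used and each contributes a bounded multiplicative factor, after the last round one obtains $\|u\|_{\cH^1_q(Q_{1/2})}\le N(\|u\|_{L_p(Q_1)}+\|g\|_{L_q(Q_1)}+\|f\|_{L_q(Q_1)})$ with $N=N(d,n,\nu,K,\omega_{A,z'},p,q)$.

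For the last assertion, when $q>d+2$ the parabolic Sobolev embedding $\cH^1_q(Q_{1/2})\hookrightarrow C^{\gamma/2,\gamma}(Q_{1/2})$ with $\gamma=1-(d+2)/q$ gives $|u|_{\gamma/2,\gamma;Q_{1/2}}\le N\|u\|_{\cH^1_q(Q_{1/2})}$, and combining with the bound just proved completes the proof. There is no serious analytic obstacle here; the points needing care are (a) having the correct anisotropic (parabolic) Sobolev and Morrey embeddings for the spaces $\cH^1_r$ at hand — these are quoted from the standard theory — and (b) ensuring the constant in the localized version of Lemma \ref{lem3.1} does not degenerate, which is handled by the monotonicity of $\omega_{A,z'}$ under downward rescaling. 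The remaining bookkeeping of the finite bootstrap is routine.
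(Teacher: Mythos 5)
Your proposal is correct and follows exactly the route the paper intends: the paper derives Corollary \ref{cor3.1} from Lemma \ref{lem3.1} precisely ``by using the Sobolev imbedding theorem and a bootstrap argument,'' which is what you have spelled out (localized version of Lemma \ref{lem3.1} on shrinking concentric cylinders, a finite chain of exponents via the parabolic embedding $\cH^1_{p_j}\hookrightarrow L_{p_{j+1}}$, and the Morrey-type embedding $\cH^1_q\hookrightarrow C^{\gamma/2,\gamma}$ for $q>d+2$). Your added care about the scaling behavior of $\omega_{A,z'}$ and the $q\le p$ case is consistent with, and merely more explicit than, the paper's argument.
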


Next we consider systems with coefficients depending on $x^d$ alone. We denote
$$
\cP_0 u=-u_t+D_\alpha(A^{\alpha\beta}(x^d)D_\beta u),
$$
and
\begin{equation*}							
U:=A^{d\beta}D_{\beta}u,
\quad
\text{i.e.,}
\quad
U^i=A^{d\beta}_{ij}D_{\beta}u^j,
\quad
i = 1, \cdots, n.
\end{equation*}
\begin{lemma}
                                    \label{lem3.2}
Let $p\in (1,\infty)$. Assume $u\in C^\infty_{\text{loc}}$ satisfies $\cP_0 u=0$ in $Q_1$. Then for any nonnegative integers $i,j$ such that $i+j\ge 1$ and any $q\in (1,\infty)$, there exists a constant $N=N(d,n,p,q,i,j,\nu)$ such that
\begin{equation}
                                    \label{eq3.23}
\|D_t^i D_{x'}^j u\|_{\cH^1_q(Q_{1/2})}\le N\|Du\|_{L_p(Q_1)}.
\end{equation}
For any $\gamma\in (0,1)$, we also have
\begin{equation}
                                    \label{eq3.48}
|D_t^i D_{x'}^j u|_{\gamma/2,\gamma;Q_{1/2}}\le N\|Du\|_{L_p(Q_1)}.
\end{equation}
Moreover,
\begin{align}
                                    \label{eq3.53}
\|D_t^i D_{x'}^j U\|_{\cH^1_q(Q_{1/2})}&\le N\|Du\|_{L_p(Q_1)},\\
                                        \label{eq3.53b}
|D_t^i D_{x'}^j U|_{\gamma/2,\gamma;Q_{1/2}}&\le N\|Du\|_{L_p(Q_1)}.
\end{align}
\end{lemma}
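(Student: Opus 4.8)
The plan is to reduce everything to the estimate \eqref{eq3.23} for $D_t^iD_{x'}^j u$ (with $i+j\ge 1$), and then recover the H\"older bounds \eqref{eq3.48}, \eqref{eq3.53}, \eqref{eq3.53b} as consequences. The crucial structural observation is that since the coefficients $A^{\alpha\beta}$ depend on $x^d$ alone, the operator $\cP_0$ commutes with $D_t$ and with $D_{x^\alpha}$ for $\alpha=1,\dots,d-1$; hence if $\cP_0 u=0$ then also $\cP_0(D_t^iD_{x'}^j u)=0$ in $Q_1$. This means that $w:=D_t^iD_{x'}^j u$ is itself a smooth solution of a homogeneous system with coefficients that are (trivially) partially VMO in $z'$, so Lemma \ref{lem3.1} applies to $w$ on a slightly smaller cylinder. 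The only thing one needs is an a priori bound on $w$ in some $L_q$ norm over, say, $Q_{3/4}$, in terms of $\|Du\|_{L_p(Q_1)}$.

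First I would handle a single derivative and then iterate. Fix $1<p<\infty$. Applying Lemma \ref{lem3.1} to $u$ itself (with $g=f=0$) gives $\|u\|_{\cH^1_p(Q_{3/4})}\le N\|u\|_{L_p(Q_1)}$; combined with the observation that $\cP_0$ annihilates $u$, iterating Lemma \ref{lem3.1} on a chain of shrinking cylinders between $Q_{3/4}$ and $Q_{1/2}$ lets us bound $\|u\|_{\cH^1_q(Q')}$ for any $q$ on any intermediate cylinder $Q'$. The point for the tangential and time derivatives is that $D_t^i D_{x'}^j u$ solves $\cP_0(\cdot)=0$, so the same iterated local $\cH^1_q$ estimate applies to it once we control it in $L_q$; this lower-order control is obtained by difference quotients in $t$ and $x'$ together with Sobolev embedding, exactly as in a standard interior-regularity bootstrap. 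This proves \eqref{eq3.23} for all $q\in(1,\infty)$ and all $i,j$ with $i+j\ge 1$. The H\"older estimate \eqref{eq3.48} for a given $\gamma\in(0,1)$ then follows by choosing $q>(d+2)/(1-\gamma)$ and invoking the Sobolev-type embedding quoted in Corollary \ref{cor3.1} (i.e.\ $\cH^1_q\hookrightarrow C^{\gamma/2,\gamma}$ for $q>d+2$), applied to $w=D_t^iD_{x'}^ju$ on $Q_{1/2}$ using the already-established bound on a larger cylinder.

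For the statements about $U=A^{d\beta}D_\beta u$, note that $D_t^iD_{x'}^j U=A^{d\beta}(x^d)D_\beta(D_t^iD_{x'}^j u)$ because the coefficients are independent of $t$ and $x'$. Write $v:=D_t^iD_{x'}^j u$. We already know $v\in \cH^1_q$ with norm controlled by $\|Du\|_{L_p(Q_1)}$, which controls $D_\beta v$ for $\beta\le d-1$ in $L_q$. The delicate term is $D_d v$, which is not directly controlled in $\cH^1_q$ — only in $L_q$. Here I would use the equation: from $\cP_0 v=0$ we have $D_d U^{(v)} = D_d(A^{d\beta}D_\beta v) = v_t - \sum_{\alpha=1}^{d-1}D_\alpha(A^{\alpha\beta}D_\beta v)$, and the right-hand side involves only $v_t$ and tangential derivatives $D_\alpha D_\beta v$ with $\alpha\le d-1$; all of these are $D_t^{i'}D_{x'}^{j'}$ of a solution to $\cP_0(\cdot)=0$ with $i'+j'\ge 1$, hence controlled in $L_q$ (indeed in $\cH^1_q$) by \eqref{eq3.23}. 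Combined with the trivial bound on the tangential derivatives $D_\alpha U^{(v)}=A^{d\beta}D_\alpha D_\beta v$, this gives $U^{(v)}=D_t^iD_{x'}^j U\in \cH^1_q(Q_{1/2})$ with the bound \eqref{eq3.53}; and \eqref{eq3.53b} follows again by Sobolev embedding exactly as for \eqref{eq3.48}.

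The main obstacle is the bookkeeping around $D_d u$ and $D_d U$: unlike the tangential and time derivatives, $u$ is not expected to be smooth enough in $x^d$ for a naive bootstrap, so one must be careful to extract all control of $x^d$-derivatives indirectly, via the equation, rather than by differentiating. Concretely, the estimate \eqref{eq3.53} for $U$ is obtained not by regularity of $A^{dd}D_d u$ per se but by rewriting $D_d U$ through $\cP_0 u=0$ in terms of quantities ($u_t$, $D_{x'}Du$) that are already under control; everything else is a routine localization/iteration of Lemma \ref{lem3.1} plus the Sobolev embedding of Corollary \ref{cor3.1}.
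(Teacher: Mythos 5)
Your overall strategy coincides with the paper's: commute $D_t$ and $D_{x'}$ with $\cP_0$ (possible because $A=A(x^d)$), apply the partially VMO estimate of Lemma \ref{lem3.1} together with the embedding of Corollary \ref{cor3.1} to the differentiated system, and recover $D_dU$ from the equation as $u_t-\sum_{\alpha<d}A^{\alpha\beta}D_{\alpha\beta}u$, with $D_{x'}U$ and $D_tU$ handled trivially since $A$ is independent of $z'$. That is exactly how the paper proves \eqref{eq3.23}--\eqref{eq3.53b}.

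The gap is in how you launch the argument when $i\ge 1$. Everything hinges on an estimate of the form $\|u_t\|_{L_2(Q_r)}\le N\|Du\|_{L_2(Q_R)}$ for $1/2\le r<R\le 1$ (the paper's inequality \eqref{eq22.22}), which the paper isolates as the key step and quotes from Lemma 3.3 of \cite{DK09}. You assert that this lower-order control comes from ``difference quotients in $t$ and $x'$ together with Sobolev embedding, exactly as in a standard interior-regularity bootstrap'', but that does not deliver it: the difference quotient $\delta^t_h u$ does solve $\cP_0(\delta^t_h u)=0$, and Caccioppoli controls $\|D\delta^t_h u\|_{L_2}$ by $\|\delta^t_h u\|_{L_2}$, yet neither Sobolev embedding nor the local $\cH^1_q$ regularity of $u$ bounds $\|\delta^t_h u\|_{L_2}$ by $\|Du\|_{L_2}$ uniformly in $h$ (membership in $\cH^1_q$ only yields time increments of order $|h|^{1/2}$ in $L_q$, so the naive bound blows up like $h^{-1/2}$). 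One genuinely needs the $t$-independence of the coefficients through an energy-type argument --- e.g.\ testing with $\zeta^2u_t$, or a duality-plus-absorption iteration on shrinking cylinders --- which is precisely the content of the cited result in \cite{DK09}. Note that this same estimate is also needed for \eqref{eq3.53} and \eqref{eq3.53b}, since $u_t$ enters your formula for $D_dU$. The tangential case $i=0$, $j\ge1$ is indeed unproblematic ($D_{x'}u$ is part of $Du$, so Lemma \ref{lem3.1} applies directly), and the remaining bookkeeping --- replacing $\|u\|_{L_p(Q_1)}$ by $\|Du\|_{L_p(Q_1)}$ on the right-hand side, using that $i+j\ge1$ annihilates constants together with the parabolic Poincar\'e inequality \eqref{eq22.58} --- is minor; but as written the time-derivative step, which is the crux of \eqref{eq3.23} for $i\ge1$, is asserted rather than proved.
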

\begin{proof}
Note that $\cP_0(D_t^i D_{x'}^j u)=0$ in $Q_1$. Thanks to Lemma \ref{lem3.1} and Corollary \ref{cor3.1}, to prove \eqref{eq3.23} it suffices to show that for any $1/2\le r<R\le 1$,
\begin{equation}
                                        \label{eq22.22}
\|u_t\|_{L_2(Q_{r})}\le N(d,n,\nu,r,R)\|Du\|_{L_2(Q_{R})}.
\end{equation}
Indeed, if $i\ge 1$, by Lemma \ref{lem3.1} and \eqref{eq22.22}, for any $1/2\le r_1<r_2<r_3\le1$,
$$
\|D_t^i D_{x'}^j u\|_{\cH^{1}_q(Q_{r_1})}\le N\|D_t^i D_{x'}^j u\|_{L_2(Q_{r_2})}
\le N\|D_t^{i-1} D_{x'}^j u\|_{\cH^{1}_2(Q_{r_3})}.
$$
Similarly, if $j\ge 1$, we have
$$
\|D_t^i D_{x'}^j u\|_{\cH^{1}_q(Q_{r_1})}\le N\|D_t^i D_{x'}^j u\|_{L_2(Q_{r_2})}
\le N\|D_t^{i} D_{x'}^{j-1} u\|_{\cH^{1}_2(Q_{r_3})}.
$$
Repeating this procedure to reduce $i$ and $j$, we reach
$$
\|D_t^i D_{x'}^j u\|_{\cH^{1,2}_q(Q_{1/2})}
\le N\|u_t\|_{L_2(Q_{3/4})}+N\|Du\|_{L_2(Q_{3/4})}\le N\|Du\|_{L_2(Q_{1})}.
$$
In the last inequality, we used \eqref{eq22.22}.
For the proof of \eqref{eq22.22}, see for instance Lemma 3.3 of \cite{DK09}. Inequality \eqref{eq3.48} is deduced from \eqref{eq3.23} by the parabolic Sobolev imbedding theorem.

To prove \eqref{eq3.53} and \eqref{eq3.53b}, one only needs to observe that in $Q_1$,
$$
D_dU = u_t - \sum_{\alpha=1}^{d-1}\sum_{\beta=1}^d D_{\alpha}(A^{\alpha\beta}D_{\beta}u)
= u_t -\sum_{\alpha=1}^{d-1}\sum_{\beta=1}^d A^{\alpha\beta}D_{\alpha\beta}u,
$$
and
$$
D_{x'}U=\sum_{\beta=1}^d A^{d\beta}D_{x'}D_{\beta}u,\quad
D_{t}U=\sum_{\beta=1}^d A^{d\beta}D_tD_{\beta}u.
$$
The $L_p$ norms of the right-hand sides are bounded by $\|Du\|_{L_p}$ due to \eqref{eq3.23}.
\end{proof}

\mysection{Systems with partially Dini coefficients}
                            \label{sec4}

This section is devoted to the proof of Theorem \ref{thm1}. The following lemma reduces the estimate of $[u]_{1/2,1}$ to the estimate of $|Du|_0$.

\begin{lemma}
                                \label{lem4.9}
Let $u$ be a weak solution to \eqref{parabolic} in $Q_1$. Suppose $|u|_{0;Q_{1/2}}<\infty$ and $|Du|_{0;Q_{1/2}}<\infty$. Then we have
\begin{equation}
                                            \label{eq1.48}
[u]_{1/2,1;Q_{1/4}}\le N(|u|_{0;Q_{1/2}}+|Du|_{0;Q_{1/2}}+|f|_{0;Q_{1/2}}+|g|_{0;Q_{1/2}}).
\end{equation}
\end{lemma}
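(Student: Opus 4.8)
The goal is to bound the parabolic Hölder semi-norm $[u]_{1/2,1;Q_{1/4}}$, which by the definition $[u]_{1/2,1} = [Du]_{0,1} + \ip{u}_{3/2}$ and the hypothesis $|Du|_{0;Q_{1/2}}<\infty$ reduces to controlling the time modulus $\ip{u}_{3/2;Q_{1/4}}$, i.e. showing
$$
|u(t,x)-u(s,x)| \le N(\ldots)\,|t-s|^{1/2}
$$
for $(t,x),(s,x)\in Q_{1/4}$. The plan is to exploit the equation \eqref{parabolic} directly: rewrite $u_t = D_\alpha(A^{\alpha\beta}D_\beta u)+D_\alpha(B^\alpha u)+\hat B^\alpha D_\alpha u + Cu - \Div g - f$ in the weak sense, and test this identity, localized by a cutoff, against a function that extracts the time difference $u(t,\cdot)-u(s,\cdot)$.

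Concretely, I would fix $(t_0,x_0)$ and $(s_0,x_0)$ in $Q_{1/4}$ with $s_0 < t_0$, set $\rho = |t_0-s_0|^{1/2}$, and choose a spatial cutoff $\zeta\in C_c^\infty(B_{2\rho}(x_0))$ with $\zeta\equiv 1$ on $B_\rho(x_0)$ and $|D\zeta|\le N/\rho$. Pairing the weak form of the equation against $\zeta^2$ (or testing against $\zeta^2$ directly in the definition of a weak solution), one gets that $\frac{d}{dt}\int u\,\zeta^2\,dx$ equals a spatial integral whose integrand is, after integrating by parts, a combination of $A^{\alpha\beta}D_\beta u\,D_\alpha(\zeta^2)$, $B^\alpha u\,D_\alpha(\zeta^2)$, $\hat B^\alpha D_\alpha u\,\zeta^2$, $Cu\,\zeta^2$, $g_\alpha D_\alpha(\zeta^2)$, and $f\zeta^2$. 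Every one of these is bounded pointwise by $N(|Du|_0 + |u|_0 + |g|_0 + |f|_0)\cdot(\rho^{-1}\chi_{B_{2\rho}} + \chi_{B_{2\rho}})$ using the boundedness of the coefficients by $K$ and the gradient bounds of the cutoff; integrating over $B_{2\rho}(x_0)$ contributes a factor $|B_{2\rho}|\sim \rho^d$ and a worst-case $\rho^{-1}$ from $D\zeta$, so
$$
\Bigl|\frac{d}{dt}\int u(t,\cdot)\zeta^2\,dx\Bigr| \le N\rho^{d-1}\bigl(|u|_{0;Q_{1/2}}+|Du|_{0;Q_{1/2}}+|g|_{0;Q_{1/2}}+|f|_{0;Q_{1/2}}\bigr).
$$
Integrating in $t$ from $s_0$ to $t_0$ (a time-interval of length $\rho^2$) yields $\bigl|\int (u(t_0,\cdot)-u(s_0,\cdot))\zeta^2\,dx\bigr| \le N\rho^{d+1}(\cdots)$. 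On the other hand, since $\zeta^2\equiv 1$ on $B_\rho(x_0)$ and $|Du|_0<\infty$, the left side differs from $|B_\rho||u(t_0,x_0)-u(s_0,x_0)|$ by at most $N\rho^{d+1}(|Du|_{0}+|u|_0)$ (the spatial oscillation of $u$ over $B_{2\rho}$ is $\le N\rho|Du|_0$, and the measure of the region where $\zeta<1$ is $\lesssim \rho^d$). Dividing by $|B_\rho|\sim\rho^d$ gives $|u(t_0,x_0)-u(s_0,x_0)| \le N\rho(\cdots) = N|t_0-s_0|^{1/2}(\cdots)$, which is exactly the claimed bound \eqref{eq1.48}.

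The main technical point — the only place any care is required — is justifying the manipulation $\frac{d}{dt}\int u\zeta^2\,dx = \langle u_t, \zeta^2\rangle$ and the use of a time-independent test function, given that a priori $u\in\cH^1_2$ only has $u_t\in\bH^{-1}_2$; this is a standard fact for weak solutions of parabolic equations (the function $t\mapsto \int u(t,\cdot)\zeta^2\,dx$ is absolutely continuous with the stated derivative), but it should be invoked cleanly, perhaps after an approximation or by testing with $\zeta^2$ times a mollified-in-time indicator of $(s_0,t_0)$ and passing to the limit. I expect no serious obstacle beyond this bookkeeping, since all the coefficient and data terms are handled purely by $L_\infty$ bounds and the cutoff estimates; no regularity of the coefficients is needed here, consistent with the constant $N$ in \eqref{eq1.48} depending only on the structural constants and not on $\omega_{A,z'}$ etc.
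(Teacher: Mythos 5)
Your argument is correct, but it follows a genuinely different route from the paper. The paper rewrites the system as $-u_t+D_\alpha(A^{\alpha\beta}D_\beta u)=D_\alpha\fg_\alpha+\ff$ with $\fg_\alpha=g_\alpha-B^\alpha u$ and $\ff=f-\hat B^\alpha D_\alpha u-Cu$, applies the parabolic Poincar\'e inequality of Lemma \ref{paraPoin} i) on every cylinder $Q_r(z_0)$, $z_0\in\overline{Q_{1/4}}$, to get $\int_{Q_r(z_0)}|u-(u)_{Q_r(z_0)}|^2\,dz\le Nr^{d+4}(|Du|_{0;Q_{1/2}}+|u|_{0;Q_{1/2}}+|g|_{0;Q_{1/2}}+|f|_{0;Q_{1/2}})^2$, and then invokes Campanato's characterization of H\"older continuity. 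You instead prove the time modulus estimate $|u(t_0,x_0)-u(s_0,x_0)|\le N|t_0-s_0|^{1/2}(\cdots)$ directly, by testing the weak formulation against a spatial cutoff at scale $\rho=|t_0-s_0|^{1/2}$, integrating in time over $(s_0,t_0)$, and using the pointwise bound on $Du$ to pass from the weighted spatial average of $u(t_0,\cdot)-u(s_0,\cdot)$ to its value at $x_0$, the spatial Lipschitz part of $[u]_{1/2,1}$ being free from $|Du|_{0;Q_{1/2}}$. This is the classical cutoff/Steklov argument; it is more self-contained (no appeal to Lemma \ref{paraPoin} or to Campanato's lemma, only $L_\infty$ bounds on coefficients and data), at the cost of justifying the absolute continuity of $t\mapsto\int u\,\zeta^2\,dx$ for an $\cH^1_2$ solution, which you correctly flag and which is standard. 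Two bits of bookkeeping should be fixed: the identity ``$[u]_{1/2,1}=[Du]_{0,1}+\ip{u}_{3/2}$'' is not the paper's definition of the $C^{1/2,1}$ semi-norm (which is $\sup|u(t,x)-u(s,y)|/(|t-s|^{1/2}+|x-y|)$), though your actual reduction --- spatial increments via $|Du|_0$ plus the time increment bound $|u(t,x)-u(s,x)|\le N|t-s|^{1/2}$ --- is exactly what is needed; and since $x_0\in B_{1/4}$ you should take $\rho\le 1/8$ so that $B_{2\rho}(x_0)\subset B_{1/2}$, handling larger $\rho$ by the trivial bound $|u(t_0,x_0)-u(s_0,x_0)|\le 2|u|_{0;Q_{1/2}}\le 16\,|u|_{0;Q_{1/2}}\,\rho$.
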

\begin{proof}
Rewrite \eqref{parabolic} as
$$
-u_t+D_\alpha(A^{\alpha\beta}D_\beta u)=D_\alpha(g_\alpha-B^{\alpha}u)+f-\hat B^{\alpha}D_\alpha u- C u:=D_\alpha \fg_\alpha+\ff,
$$
where
\begin{equation}
                    \label{eq328.20.58}
\fg_\alpha=g_\alpha-B^{\alpha}u,\quad
\ff=f-\hat B^{\alpha}D_\alpha u- C u.
\end{equation}
We fix $z_0\in \overline{Q_{1/4}}$ and take $r\in (0,1/4)$. By Lemma \ref{paraPoin}, we have
$$
\int_{Q_r(z_0)}|u-(u)_{Q_r(z_0)}|^2\,dz
\le Nr^2\int_{Q_r(z_0)}\left(|Du|^2+|\fg|^2+r^2|\ff|^2\right)
$$
$$
\le Nr^{n+4}\left(|Du|_{0;Q_{1/2}}+|u|_{0;Q_{1/2}}
+|g|_{0;Q_{1/2}}+|f|_{0;Q_{1/2}}\right)^2.
$$
The inequality \eqref{eq1.48} then follows from Campanato's characterization of H\"older continuous functions; see, for instance, \cite[Lemma 4.3]{Lieberman}.
\end{proof}

Now we are ready to prove Theorem \ref{thm1}. We shall divide the proof into two steps.

{\em Step 1: Estimate of $|Du|_{0}$.}
We first assume that all the coefficients and data are smooth, so that by the classical theory $|Du|_{0;Q_{3/4}}<\infty$. We take $0<\gamma_1<\gamma<1$.
Fix a point $z_0\in Q_{3/4}$, and take $0<r<R\le (3/4-|x_0|)/2$. Now take $z_1'\in Q'_{R}(z_0')$ and denote
$$
\cP_{z_1'} u:=-u_t+D_\alpha(A^{\alpha\beta}(z_1',x^d)D_\beta u).
$$
Recall the definitions of $\fg$ and $\ff$ in \eqref{eq328.20.58}.
Then we have
$$
\cP_{z_1'} u=\Div (\fg+m)+\ff,
$$
where
$$
m_\alpha^i(z)=(A^{\alpha\beta}_{ij}(z_1',x^d)-A^{\alpha\beta}_{ij}(z))
D_\beta u^j.
$$
Let $$
u_0(x^d)=\int_{-1}^{x^d}(A^{dd}(z_1',s))^{-1}\fg_d(z_1',s)\,ds,
\quad u_e=u-u_0.
$$
Clearly,
\begin{equation*}
\cP_{z_1'} u_e=\Div (\fg(z)-\fg(z_1',x^d)+m(z))+\ff.
\end{equation*}

Let $v$ be a weak solution to the equation
\[
\left\{
  \begin{aligned}
    \cP_{z_1'} v= \Div (\fg(z)-\fg(z_0',x^d)+m(z))+\ff \quad & \hbox{in $Q_{R}(z_0)$;} \\
    v=0 \quad & \hbox{on $\partial_p Q_{R}(z_0)$.}
  \end{aligned}
\right.
\]
By Lemma \ref{lem3.6}, we get
\begin{align}
&\|Dv\|_{L_2(Q_{R}(z_0))}\nonumber\\
&\le N\|\fg(z)-\fg(z_1',x^d)+m\|_{L_2(Q_{R}(z_0))}+NR\|\ff\|_{L_2(Q_{R}(z_0))}\nonumber\\
&\le N\|\fg(z)-\fg(z_1',x^d)+m\|_{L_2(Q_{R}(z_0))}+N
|\ff|_{0;Q_{R}(z_0)}R^{d/2+1+\gamma_1}.
                                \label{eq4.34}
\end{align}

Let $w=u_e-v$. Then $w$ satisfies $\cP_{z_1'} w = 0$ in $Q_{R}(z_0)$. Denote
\begin{equation}
                                \label{eq21.11}
W:=
A^{d\beta}(z_1',x^d)D_{\beta}w.
\end{equation}
It follows from Lemma \ref{lem3.2} and a suitable scaling that
\begin{align}
&\int_{Q_r(z_0)}|D_{x'}w-(D_{x'}w)_{Q_r(z_0)}|^2+|W-(W)_{Q_r(z_0)}|^2\,dz\nonumber\\
&\,\le N(r/R)^{d+2+2\gamma}
\int_{Q_R(z_0)}|Dw|^2\,dz\nonumber\\
&\,\le N(r/R)^{d+2+2\gamma}
\int_{Q_R(z_0)}|D_{x'}w|^2+|W|^2\,dz.
                                    \label{eq23.33}
\end{align}
In the last inequality, we also used the nondegeneracy of $A^{dd}$.
Define
$$
h(x^d)=\int_0^{x^d}(A^{dd}(z_1',s))^{-1}
\left((W)_{Q_R(z_0)}-\sum_{\beta=1}^{d-1}A^{d\beta}(z_1',\cdot)
(D_{\beta}w)_{Q_R(z_0)}\right)\,ds,
$$
and
$$
\tilde w:=w-\sum_{\beta=1}^{d-1}x^\beta (D_{\beta}w)_{Q_R(z_0)}-h(x^d).
$$
We define $\tilde W$ and $U_e$ as in \eqref{eq21.11} with $\tilde w$ and $u_e$ in place of $w$ respectively. Then,
$$
D_{x'}\tilde w=D_{x'}w-(D_{x'}w)_{Q_R(z_0)},\quad \tilde W=W-(W)_{Q_R(z_0)}.
$$
A direct calculation shows that $\tilde w$ also satisfies $\cP_{z_1'} \tilde w = 0$ in $Q_{R}(z_0)$. We substitute $w$ and $W$ in \eqref{eq23.33} by $\tilde w$ and $\tilde W$ to get
\begin{align}
&\int_{Q_r(z_0)}|D_{x'}w-(D_{x'}w)_{Q_r(z_0)}|^2+|W-(W)_{Q_r(z_0)}|^2
\,dz\nonumber\\
&\,\le N(r/R)^{d+2+2\gamma}
\int_{Q_R(z_0)}|D_{x'}w-(D_{x'}w)_{Q_R(z_0)}|^2+|W-(W)_{Q_R(z_0)}|^2\,dz.
                                    \label{eq5.35}
\end{align}
We combine \eqref{eq4.34} with \eqref{eq5.35} and use the triangle inequality to obtain
\begin{align}
&\int_{Q_r(z_0)}|D_{x'}u_e-(D_{x'}u_e)_{Q_r(z_0)}|^2+|U_e-(U_e)_{Q_r(z_0)}|^2\,dz\nonumber\\
&\,\le N_1(r/R)^{d+2+2\gamma}
\int_{Q_R(z_0)}|D_{x'}u_e-(D_{x'}u_e)_{Q_R(z_0)}|^2+|U_e-(U_e)_{Q_R(z_0)}|^2\,dz\nonumber\\
&\,\,+N\|\fg(z)-\fg(z_1',x^d)+m\|^2_{L_2(Q_{R}(z_0))}+N
|\ff|^2_{0;Q_{R}(z_0)}R^{d+2+2\gamma_1}.
                                    \label{eq5.36}
\end{align}
where $N_1=N_1(d,n,\nu)$. It is not very convenient to use \eqref{eq5.36} since both $u_e$ and $U_e$ depend on $z_0$.
However, by the definition of $u_0$, $u_e$ and $U_e$,
$$
D_{x'}u_e=D_{x'}u,\quad U_e=
A^{d\beta}(z_1',x^d)D_\beta u-\fg_d(z_1',x^d)
$$
Recall that
\begin{equation}
                                        \label{eq22.19}
\hat U=
A^{d\beta}D_\beta u-\fg_d(z),
\end{equation}
which is independent of the choice of $z_0$.
Clearly, in $Q_R(z_0)$
$$
|\hat U(z)-U_e(z)|\le N|A(z)-A(z_1',x^d)||Du|_{0;Q_{R}(z_0)}+|\fg_d(z)-\fg_d(z_1',x^d)|.
$$
Thus, coming back to \eqref{eq5.36} we get
\begin{align}
&\int_{Q_r(z_0)}|D_{x'}u-(D_{x'}u)_{Q_r(z_0)}|^2+|\hat U-(\hat U)_{Q_r(z_0)}|^2\,dz\nonumber\\
&\,\le N_1(r/R)^{d+2+2\gamma}
\int_{Q_R(z_0)}|D_{x'}u-(D_{x'}u)_{Q_R(z_0)}|^2+|\hat U-(\hat U)_{Q_R(z_0)}|^2\,dz\nonumber\\
&\,\,+N\|\fg(z)-\fg(z_1',x^d)\|^2_{L_2(Q_{R}(z_0))}
+N\|A(z)-A(z_1',x^d)\|^2_{L_2(Q_{R}(z_0))}
|Du|^2_{0;Q_{R}(z_0)}\nonumber\\
&\,\,+N|\ff|^2_{0;Q_{R}(z_0)}R^{d+2+2\gamma_1}.
                                    \label{eq5.36bb}
\end{align}
Now we take the average of both sides of \eqref{eq5.36bb} with respect to $z_1'\in Q_R'(z_0)$, and use the definitions of $\fg$ and $\ff$ to obtain
\begin{align}
&\int_{Q_r(z_0)}|D_{x'}u-(D_{x'}u)_{Q_r(z_0)}|^2+|\hat U-(\hat U)_{Q_r(z_0)}|^2\,dz\nonumber\\
&\,\le N_1(r/R)^{d+2+2\gamma}
\int_{Q_R(z_0)}|D_{x'}u-(D_{x'}u)_{Q_R(z_0)}|^2+|\hat U-(\hat U)_{Q_R(z_0)}|^2\,dz\nonumber\\
&\,\,+N\left(\omega_{g,z'}(R)+\omega_{B,z'}(R)|u|_{0;Q_{3/4}}
+\omega_{A,z'}(R)|Du|_{0;Q_{R}(z_0)}\right)^2R^{d+2}\nonumber\\
&\,\,+N\left([u]_{\gamma_1/2,\gamma_1;Q_{3/4}}+|\ff|_{0;Q_{R}(z_0)}\right)^2 R^{d+2+2\gamma_1}.
                                    \label{eq4.45}
\end{align}

Set $r=\tau R$ for some $\tau\in (0,1)$ to be chosen later, and denote
$$
\phi_r(z_0)=\dashint_{Q_r(z_0)}|D_{x'}u-(D_{x'}u)_{Q_r(z_0)}|^2
+|\hat U-(\hat U)_{Q_r(z_0)}|^2\,dz.
$$
It follows from \eqref{eq4.45} that
$$
\phi_{\tau R}(z_0)\le N_1\tau^{2\gamma}\phi_{R}(z_0)+N\left(
\omega_{g,z'}^2(R)+\omega_{B,z'}^2(R)|u|^2_{0;Q_{3/4}}
\right)\tau^{-d-2}
$$
$$
+N\omega_{A,z'}^2(R)|Du|^2_{0;Q_{R}(z_0)}\tau^{-d-2}
+N\left([u]^2_{\gamma_1/2,\gamma_1;Q_{3/4}}+|\ff|^2_{0;Q_{R}(z_0)}\right) \tau^{-d-2}R^{2\gamma_1}.
$$
We fix $\tau=\tau(d,n,\nu)<1$ sufficiently small such that $N_1\tau^{2\gamma}\le 1/2$. By an iteration, we obtain,
$$
\phi_{\tau^k R}(z_0)\le 2^{-k}\phi_{R}(z_0)+N\sum_{j=1}^{k}2^{-j}\Big(\omega_{g,z'}^2(\tau^{k-j}R)
+\omega_{B,z'}^2(\tau^{k-j}R)|u|^2_{0;Q_{3/4}}
$$
\begin{equation}
                                \label{eq5.48}
+\omega_{A,z'}^2(\tau^{k-j}R)|Du|^2_{0;Q_{R}(z_0)}\Big)
+N\left([u]_{\gamma_1/2,\gamma_1;Q_{3/4}}+|\ff|_{0;Q_{R}(z_0)}\right)^2 \sum_{j=1}^{k}2^{-j}(\tau^{k-j}R)^{2\gamma_1}.
\end{equation}
We define
$$
\psi_r(z_0)=\dashint_{Q_r(z_0)}|D_{x'}u|+|\hat U|\,dz.
$$
By the triangle inequality and H\"older's inequality,
\begin{align}
|\psi_{\tau r}(z_0)-\psi_r(z_0)|&\le \dashint_{Q_{\tau r}(z_0)}|D_{x'}u-(D_{x'}u)_{Q_r(z_0)}|+|\hat U-(\hat U)_{Q_r(z_0)}|\,dz\nonumber\\
                                \label{eq1.46}
&\le N(\phi_r(z_0))^{1/2}.
\end{align}
Combining \eqref{eq5.48} and \eqref{eq1.46}, we deduce
\begin{multline*}
|\psi_{\tau^k R}(z_0)-\psi_{\tau^{k-1} R}(z_0)|\le
N2^{-k/2}\phi_{R}^{1/2}(z_0)\\
+N\sum_{j=1}^{k}2^{-j/2}
\Big(\omega_{g,z'}(\tau^{k-j}R)
+\omega_{B,z'}(\tau^{k-j}R)|u|_{0;Q_{3/4}}
+\omega_{A,z'}(\tau^{k-j}R)|Du|_{0;Q_{R}(z_0)}\Big)\\
+N\left([u]_{\gamma_1/2,\gamma_1;Q_{3/4}}+|\ff|_{0;Q_{R}(z_0)}\right) \sum_{j=1}^{k}2^{-j/2}(\tau^{k-j}R)^{\gamma_1}.
\end{multline*}
Summing the inequality above in $k$ gives
$$
\psi_{\tau^k R}(z_0)\le \psi_{R}(z_0)+
N\phi_{R}^{1/2}(z_0)+N\sum_{j=1}^{k}\left(\omega_{g,z'}(\tau^{k-j}R)
+\omega_{B,z'}(\tau^{k-j}R)|u|_{0;Q_{3/4}}\right)
$$
$$
+N\sum_{j=1}^k\omega_{A,z'}(\tau^{k-j}R)|Du|_{0;Q_{R}(z_0)}
+N\left([u]_{\gamma_1/2,\gamma_1;Q_{3/4}}+|\ff|_{0;Q_{R}(z_0)}\right) R^{\gamma_1},
$$
where $N=N(d,n,\nu)$. To estimate the summations on the right-hand side, we recall
$$
I[\omega](r)=\int_0^r \omega(s)/s\,ds.
$$
It is easy to see that for any Dini function $\omega$ and $k\ge 1$,
$$
\sum_{j=1}^{k}\omega(\tau^{k-j}R)\le NI[\omega](\tau^{-1} R).
$$
Therefore, we deduce
\begin{multline}
\psi_{\tau^k R}(z_0)\le \psi_{R}(z_0)+
N\phi_{R}^{1/2}(z_0)+NI[\omega_{g,z'}](\tau^{-1} R)
+NI[\omega_{B,z'}](\tau^{-1} R)|u|_{0;Q_{3/4}}\\
                            \label{eq19.54}
+NI[\omega_{A,z'}](\tau^{-1} R)|Du|_{0;Q_R(z_0)}
+N\left([u]_{\gamma_1/2,\gamma_1;Q_{3/4}}+|\ff|_{0;Q_{R}(z_0)}\right) R^{\gamma_1}.
\end{multline}
It follows from \eqref{eq19.54} and the definition of $\ff$ that
\begin{align}
&(|Du|)_{Q_{\tau^k R}(z_0)}\le \psi_{R}(z_0)+
N\phi_{R}^{1/2}(z_0)+NI[\omega_{g,z'}](\tau^{-1} R)\nonumber\\
&\qquad+NI[\omega_{B,z'}](\tau^{-1} R)|u|_{0;Q_{3/4}}
+N(|g|_{0;Q_{3/4}}+|u|_{0;Q_{3/4}}+[u]_{\gamma_1/2,\gamma_1;Q_{3/4}})\nonumber\\
                            \label{eq20.00}
&\qquad+N_2(I[\omega_{A,z'}](\tau^{-1} R)+R^{\gamma_1})|Du|_{0;Q_R(z_0)}
+N|f|_{0;Q_{R}(z_0)} R^{\gamma_1},
\end{align}
where $N_2=N_2(d,n,\nu)>0$. We choose $R_0=R_0(d,n,\nu,\omega_{A,z'})\in (0,\tau/4))$ such that
$$
N_2(I[\omega_{A,z'}](\tau^{-1} R_0)+R_0^{\gamma_1})<2^{-d-2}.
$$
Now we confine $z_0$ to $Q^{(\ell)}:=Q_{3/4-2^{-\ell-1}R_0},\ell=1,2,...$ and set $R=2^{-\ell-2}R_0$. Since $k$ and $z_0\in Q^{(\ell)}$ in \eqref{eq20.00} are arbitrary, due to the Lebesgue lemma, we obtain
$$
|Du|_{0;Q^{(\ell)}}\le N_3 2^{\ell(d+2)/2}\left(\|Du\|_{L_2(Q_1)}+|g|_{0;Q_{3/4}}+I[\omega_{g,z'}](1)+|u|_{0;Q_{3/4}}
\right)
$$
\begin{equation}
                                \label{eq2.40b}
+N_3 2^{\ell(d+2)/2}\left(|f|_{0;Q_{R}(z_0)}+[u]_{\gamma_1/2,\gamma_1;Q_{3/4}}\right)
+2^{-d-2}|Du|_{0;Q^{(\ell+1)}},
\end{equation}
where $N_3=N_3(d,n,\nu,\omega_{A,z'},\omega_{B,z'})>0$. Multiplying both sides of \eqref{eq2.40b} by $2^{-\ell(d+2)}$ and summing in $\ell=1,2,...$ yield
\begin{multline*}
\sum_{\ell=1}^\infty2^{-\ell(d+2)}|Du|_{0;Q^{(\ell)}}\le N_3 \left(\|Du\|_{L_2(Q_1)}+|g|_{0;Q_{3/4}}+I[\omega_{g,z'}](1)+|u|_{0;Q_{3/4}}\right)\\
+ N_3\left(|f|_{0;Q_{R}(z_0)}+[u]_{\gamma_1/2,\gamma_1;Q_{3/4}}\right)
+\sum_{\ell=1}^\infty 2^{-(\ell+1)(d+2)}|Du|_{0;Q^{(\ell+1)}}.
\end{multline*}
Since we assume $|Du|_{0;Q_{3/4}}<\infty$, it follows from the inequality above by absorbing the summation on the right-hand side into the left-hand side that
$$
|Du|_{0;Q^{(1)}}\le
N_3 \left(\|Du\|_{L_2(Q_1)}+|g|_{0;Q_{3/4}}+I[\omega_{g,z'}](1)+|u|_{0;Q_{3/4}}\right)
$$
$$
+N_3\left(|f|_{0;Q_{R}(z_0)}+[u]_{\gamma_1/2,\gamma_1;Q_{3/4}}\right).
$$
To finish the proof of \eqref{eq4.30}, it suffices to use Corollary \ref{cor3.1}.

Now we remove the smoothness assumption on the coefficients and data by using a standard approximation argument, which we sketched below for the completeness. Let $A^{\alpha\beta}_{(m)},m=1,2,...$ be the mollifications of $A^{\alpha\beta}$. Similarly, we define $B^\alpha_{(m)}$,  $\hat B^\alpha_{(m)}$, $C^\alpha_{(m)}$, $g_{(m)}$ and $f_{(m)}$. We know that $A^{\alpha\beta}_{(m)},m=1,2,...$ are uniformly elliptic with the same ellipticity constant $\nu$, and as $m\to \infty$,
\begin{align*}
\left(f_{(m)},g_{(m)}\right)&\to (f,g)\,\,\text{in }  L_2(Q_1),\\
\left(A^{\alpha\beta}_{(m)},B^\alpha_{(m)},\hat B^\alpha_{(m)}\right)&\to \left(A^{\alpha\beta},B^\alpha,\hat B^\alpha\right)\,\,\text{a.e.}.
\end{align*}
Moreover,
$$
\omega_{A_{(m)},z'}\le \omega_{A,z'},\quad \omega_{B_{(m)},z'}\le \omega_{B,z'}.
$$
Let $\cP_m$ be the parabolic operator with coefficients $A^{\alpha\beta}_{(m)}, B^\alpha_{(m)}$ and $C_{(m)}$ instead of $A^{\alpha\beta}, B^\alpha$ and $C$.
Let $v_m\in \cH^1_2(Q_1)$ be the weak solution to the equation
\begin{multline}
\cP_m v_m= \Div(g-g_{(m)})+f-f_{(m)}
    +D_\alpha((A^{\alpha\beta}_{(m)}-A^{\alpha\beta})D_\beta u\\
                                            \label{eq21.23}
+(B^\alpha_{(m)}-B^\alpha)u)+(\hat B^\alpha_{(m)}-\hat B^\alpha)D_\alpha u+
      (\hat C_{(m)}-\hat C)u
\end{multline}
in $Q_1$ with the zero initial-boundary condition $v_m=0$ on the parabolic boundary of $Q_1$. Note that the right-hand side of \eqref{eq21.23} goes to zero in $L_2$ as $m\to \infty$. It follows from the $L_2$ estimate that $\|v_m\|_{L_2(Q_1)}+\|Dv_m\|_{L_2(Q_1)}\to 0$ as $m\to \infty$, which further implies that there is subsequence, which is still denoted by $\{v_m\}$, such that $|Dv_m|+|v_m|\to 0$ a.e. in $Q_1$. Let $u_m=u-v_m$. It is clear that $u_m$ is a weak solution to the equation
$$
\cP_m u_m=\Div g_{(m)}+f_{(m)}
$$
in $Q_1$. By the classical parabolic theory, $u_m$ is infinitely differentiable in $Q_1$. Thus, by the proof above and by slightly shrinking the domain, we obtain a uniform estimate
\begin{align*}
|Du_m|_{0;Q_{1/2}}&\le N(I[\omega_{g,z'}](1)+|g_{(m)}|_{0;Q_{5/6}}+|f_{(m)}|_{0;Q_{5/6}}
+\|u_m\|_{L_2(Q_{5/6})})\\
&\le N(I[\omega_{g,z'}](1)+|g|_{0;Q_1}+|f|_{0;Q_1}
+\|u_m\|_{L_2(Q_1)}).
\end{align*}
Since $Du_m\to Du$ a.e. and $u_m\to u$ in $L_2$ in $Q_1$, by taking the limit in the above inequality  we get \eqref{eq4.30}.

{\em Step 2: Continuity of $D_{x'}u$ and $\hat U$.} Next, we prove the second claim of the theorem.
Fix a point $z_0\in \overline{Q_{1/4}}$ and take $0<r<R\le 1/4$.

We define $\omega(R)=\omega_{g,z'}(R)+\omega_{B,z'}(R)+\omega_{A,z'}(R)+R^{\gamma}$, which is a Dini function. By the estimates on $u$ and $Du$ which have already been established, we get from \eqref{eq4.45} that
\begin{align}
&\dashint_{Q_r(z_0)}|D_{x'}u-(D_{x'}u)_{Q_r(z_0)}|^2+|\hat U-(\hat U)_{Q_r(z_0)}|^2\,dz\nonumber\\
&\,\le N_1(r/R)^{2\gamma}
\int_{Q_R(z_0)}|D_{x'}u-(D_{x'}u)_{Q_R(z_0)}|^2+|\hat U-(\hat U)_{Q_R(z_0)}|^2\,dz\nonumber\\
&\,\,+N_4\omega(R)(R/r)^{d+2},
                                    \label{eq4.46}
\end{align}
where $N_4$ is independent of $r,R$ and $z_0$.
Again we take $r=\tau R$ with $\tau=\tau(d,n,\nu)$ sufficiently small such that $N_1\tau^{2\gamma}<1/2$. By an iteration, we obtain from \eqref{eq4.46}
\begin{align*}
&\dashint_{Q_{\tau^kR}(z_0)}|D_{x'}u-(D_{x'}u)_{Q_r(z_0)}|^2+|\hat U-(\hat U)_{Q_r(z_0)}|^2\,dz\nonumber\\
&\,\le 2^{-k}
\dashint_{Q_R(z_0)}|D_{x'}u-(D_{x'}u)_{Q_R(z_0)}|^2+|\hat U-(\hat U)_{Q_R(z_0)}|^2\,dz\nonumber\\
&\,\,+N_4\sum_{j=1}^k 2^{-j}\omega(\tau^{k-j}R).
\end{align*}
We set $R=1/4$ and use Lemma \ref{lem4.08} to conclude that for $r\in (0,1/4)$
$$
\dashint_{Q_{r}(z_0)}|D_{x'}u-(D_{x'}u)_{Q_r(z_0)}|^2+|\hat U-(\hat U)_{Q_r(z_0)}|^2\,dz
\le \tilde \omega(r),
$$
where $\tilde \omega$ is a Dini function. It then follows from (6.3) of \cite{Sperner} that $D_{x'}u$ and $\hat U$ are uniformly continuous in $\overline{Q_{1/4}}$. By using a dilation and covering argument, the continuity of $D_{x'}u$ and $\hat U$ in $\overline{Q_{1/2}}$ follows.

\mysection{Systems with partially H\"older coefficients}
                        \label{sec5}
We prove Theorem \ref{thm2} in this section.
Thanks to Theorem \ref{thm1}, we only need to estimate the last three terms on the left-hand side of \eqref{eq4.20}. We estimate them separately by using different methods.

\subsection{Estimate of $D_{x'}u$ and $\hat U$}
We fix a point $z_0\in Q_{1/4}$ and take $0<r<R\le 1/4$. Following the proof of Theorem \ref{thm1}, we take $\gamma_1=\delta$ and $\gamma\in (\delta,1)$. Note that under the conditions of Theorem \ref{thm2}, we have
$$
\omega_{g,z'}(R)\le[g]_{z',\delta/2,\delta}R^\delta,
$$
and similar inequalities for $\omega_{A,z'}(R)$ and $\omega_{B,z'}(R)$. Owing to \eqref{eq4.45}, we get
\begin{align}
&\int_{Q_r(z_0)}|D_{x'}u-(D_{x'}u)_{Q_r(z_0)}|^2+|\hat U-(\hat U)_{Q_r(z_0)}|^2\,dz\nonumber\\
&\,\le N_1(r/R)^{d+2+2\gamma}
\int_{Q_R(z_0)}|D_{x'}u-(D_{x'}u)_{Q_R(z_0)}|^2+|\hat U-(\hat U)_{Q_R(z_0)}|^2\,dz\nonumber\\
&\,\,+N_2R^{d+2+2\delta}\left([g]_{z',\delta/2,\delta}+|\ff|_{0;Q_{R}(z_0)}
+|u|_{\delta/2,\delta;Q_{3/4}}
+|Du|_{0;Q_{R}(z_0)}\right)^2,
                                    \label{eq11.33}
\end{align}
where $N_1$ only depends on $d,n$ and $\nu$, and $N_2$ also depends on the $C_{z'}^{\delta/2,\delta}$ semi-norms of $A$ and $B$.

Since \eqref{eq11.33} holds for any $0<r<R\le 1/4$ and $\delta<\gamma$, by a well-known iteration argument
(see e.g., \cite[Lemma 2.1, p. 86]{Giaq83}),
\begin{align}
&\int_{Q_r(z_0)}|D_{x'}u-(D_{x'}u)_{Q_r(z_0)}|^2+|\hat U-(\hat U)_{Q_r(z_0)}|^2\,dz\nonumber\\
&\,\le N_1r^{d+2+2\delta}
\int_{Q_{1/4}(z_0)}|D_{x'}u-(D_{x'}u)_{Q_{1/4}(z_0)}|^2+|\hat U-(\hat U)_{Q_{1/4}(z_0)}|^2\,dz\nonumber\\
&\,\,+N_2r^{d+2+2\delta}\left([g]_{z',\delta/2,\delta;Q_1}+|\ff|_{0;Q_{R}(z_0)}
+|u|_{\delta/2,\delta;Q_{3/4}}
+|Du|_{0;Q_{R}(z_0)}\right)^2.
                                    \label{eq11.39}
\end{align}
By the definition of $\ff$ and $U_e$, we get from \eqref{eq11.39}, Theorem \ref{thm1}, Corollary \ref{cor3.1} that
\begin{align}
&\int_{Q_r(z_0)}|D_{x'}u-(D_{x'}u)_{Q_r(z_0)}|^2+|\hat U-(\hat U)_{Q_r(z_0)}|^2\,dz\nonumber\\
&\,\le N_2r^{d+2+2\delta}\left(|g|_{z',\delta/2,\delta;Q_1}+|f|_{0;Q_1}
+\|u\|_{L_2(Q_1)}\right)^2.
                                    \label{eq11.45}
\end{align}
Since \eqref{eq11.45} holds for any $r\in (0,1/4)$ and $z_0\in Q_{1/4}$, by Campanato's characterization of H\"older continuous functions, we obtain
\begin{equation*}
[D_{x'}u]_{\delta/2,\delta;Q_{1/4}}+[\hat U]_{\delta/2,\delta;Q_{1/4}}\le N_2\left(|g|_{z',\delta/2,\delta;Q_1}+|f|_{0;Q_1}
+\|u\|_{L_2(Q_1)}\right).
\end{equation*}
This together with a dilation and covering argument gives
\begin{equation}
[D_{x'}u]_{\delta/2,\delta;Q_{1/2}}+[\hat U]_{\delta/2,\delta;Q_{1/2}}\le N_2\left(|g|_{z',\delta/2,\delta;Q_1}+|f|_{0;Q_1}
+\|u\|_{L_2(Q_1)}\right).
                                    \label{eq11.55}
\end{equation}

\subsection{Estimate of $\ip{u}_{1+\delta}$}
We estimate $\ip{u}_{1+\delta}$ by modifying the argument in \cite{DongSeickK09}, which in turn is based on an idea by M. Safonov. The argument in \cite{DongSeickK09} uses the maximum principle, which is unavailable for systems. Here we use some estimates established in Section \ref{sec3} instead.

We denote by $\hat\bP_1$ the set of all functions $p$ on $\bR^{d+1}$ of the form
\[
p(z)=p(t,x^1,x^d)=\sum_{i=1}^{d-1} \alpha^i(x^d) x^i + \beta(x^d).
\]
Then we define the first-order partial Taylor's polynomial with respect to $z'=(t,x')$ of a function $v$ on $\bR^{d+1}$ at a point $z_0'=(t_0,x_0')$ as
\[
\hT^1_{z_0'}v(z',x^d):=v(z_0',x^d)+\sum_{i=1}^{d-1}  D_i v(z_0',x^d) (x^i-x_0^i).
\]
Let
$$
\zeta(z')=\zeta(t,x^1,\ldots,x^{d-1})=\eta(t+1)\prod_{i=1}^{d-1}\eta(x^i),
$$
where $\eta$ is a smooth even function on $\bR$ with a compact support in $(-1,1)$ satisfying
$$
\int_{\bR} \eta(t)\,dt=1,\quad \int_{\bR} t^2\eta(t)\,dt=0.
$$
For $\epsilon>0$ let $\zeta_\epsilon(t,x')=\epsilon^{-d-1}\zeta(\epsilon^{-2}t,\epsilon^{-1}x')$ and define a partial mollification of $v$ with respect to $z'$ as
\begin{align*}
\hat v^\epsilon(t,x',x^d)&=\int_{\bR}\int_{\bR^{d-1}}v(s,y',x^d) \zeta_\epsilon(t-s,x'-y')\,dy'\,ds\\
&=\int_{\bR} \int_{\bR^{d-1}}v(t-\epsilon^2s,x'-\epsilon y',x^d) \zeta(s,y')\,dy'\,ds.
\end{align*}

By virtue of Taylor's formula, it is not hard to prove the following lemma for partial mollifications (see, e.g., \cite[Chapter 3]{Kr96}).
\begin{lemma}
                                    \label{lem2.25}
Let $T\in (-\infty,\infty]$ and $\delta\in [0,2]$.
Suppose $v\in C^{\delta/2,\delta}_{z'}(\bR^{d+1}_T)$. Then for any $\epsilon>0$,
\[
\epsilon^{2-\delta}\sup_{\bR^{d+1}_T} |D_t \hat v^\epsilon|+\epsilon^{1-\delta} \sup_{\bR^{d+1}_T} |D_{x'} \hat v^\epsilon| +\epsilon^{2-\delta} \sup_{\bR^{d+1}_T} |D_{x'}^2 \hat v^\epsilon| \le N[v]_{z',\delta/2,\delta},
\]
\[
\sup_{\bR^{d+1}_T}|v-\hat v^\epsilon| \le N(d,\delta,\eta)\epsilon^{\delta}[v]_{z',\delta/2,\delta},
\]
where $N=N(d,\delta,\eta)$.
\end{lemma}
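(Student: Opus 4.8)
The plan is to obtain both displayed inequalities by freezing the variable $x^d$ and treating $\hat v^\epsilon$, in the remaining variables $z'=(t,x')$, as an ordinary parabolic mollification to which Taylor's formula applies; the only genuinely ``partial'' point is that every increment of $v$ that enters is taken along a slice $\{x^d=\mathrm{const}\}$ and is therefore controlled by $[v]_{z',\delta/2,\delta}$ alone. First I would record the moment properties of $\zeta$. From $\int_{\bR}\eta=1$ we get $\int_{\bR^{d}}\zeta(s,y')\,ds\,dy'=1$; since $\eta$ is even, $\int_{\bR^{d}}y^{i}\zeta(s,y')\,ds\,dy'=0$ for $i=1,\dots,d-1$; and the normalization $\int_{\bR}t^{2}\eta=0$ makes the pure second moments of $\zeta$ of the critical parabolic order vanish as well. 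By the change of variables defining $\zeta_\epsilon$ these identities carry over to $\zeta_\epsilon$, and differentiating $\int_{\bR^{d}}\zeta_\epsilon(t-\sigma,x'-\eta')\,d\sigma\,d\eta'\equiv1$ in $t$ and $x'$ gives $\int_{\bR^{d}}D_t\zeta_\epsilon=\int_{\bR^{d}}D_{x'}\zeta_\epsilon=\int_{\bR^{d}}D^{2}_{x'}\zeta_\epsilon=0$, while integration by parts shows that the first $x'$-moments of $D_t\zeta_\epsilon$ and of $D^{2}_{x'}\zeta_\epsilon$ vanish too. By scaling, $\int_{\bR^{d}}|D_t\zeta_\epsilon|\le N\epsilon^{-2}$, $\int_{\bR^{d}}|D_{x'}\zeta_\epsilon|\le N\epsilon^{-1}$ and $\int_{\bR^{d}}|D^{2}_{x'}\zeta_\epsilon|\le N\epsilon^{-2}$.

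For the $C^0$ bound I fix $z_0=(t_0,x_0',x_0^d)$. If $\delta\in[0,1]$, I subtract the constant $v(z_0)$ using $\int\zeta_\epsilon=1$ and write
\[
v(z_0)-\hat v^\epsilon(z_0)=\int_{\bR^{d}}\bigl(v(z_0)-v(\sigma,\eta',x_0^d)\bigr)\,\zeta_\epsilon(t_0-\sigma,x_0'-\eta')\,d\sigma\,d\eta'.
\]
Because $x^d=x_0^d$ is frozen, the integrand is at most $[v]_{z',\delta/2,\delta}\,(|t_0-\sigma|^{\delta/2}+|x_0'-\eta'|^{\delta})$, which on the support of $\zeta_\epsilon(t_0-\sigma,x_0'-\cdot)$ is $\le N\epsilon^{\delta}[v]_{z',\delta/2,\delta}$; this gives the second estimate. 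If $\delta\in(1,2]$ I run the same computation with $v(z_0)$ replaced by the partial Taylor polynomial $\hT^1_{z_0'}v(\sigma,\eta',x_0^d)$; the subtraction is legitimate since the zeroth and first $x'$-moments of $\zeta_\epsilon$ vanish, and splitting the remainder into a $t$-increment of $v$ (bounded via the $C^{\delta/2}$-in-$t$ part of the seminorm) plus the first-order $x'$-Taylor remainder of the slice $v(t_0,\cdot,x_0^d)$ (bounded via $[D_{x'}v]_{z',(\delta-1)/2,\delta-1}$) again yields $\le N\epsilon^{\delta}[v]_{z',\delta/2,\delta}$ on the support.

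For the derivative bounds I differentiate under the integral sign in the kernel rather than in $v$ (which need not be differentiable when $\delta\le1$), so that $D_t\hat v^\epsilon(z_0)$, $D_{x'}\hat v^\epsilon(z_0)$ and $D^{2}_{x'}\hat v^\epsilon(z_0)$ become integrals of $v(\sigma,\eta',x_0^d)$ against $D_t\zeta_\epsilon$, $D_{x'}\zeta_\epsilon$ and $D^{2}_{x'}\zeta_\epsilon$. Using the vanishing of $\int D_t\zeta_\epsilon$, $\int D_{x'}\zeta_\epsilon$, $\int D^{2}_{x'}\zeta_\epsilon$ (and of the pertinent first moments) I subtract the same constant, respectively polynomial, as above and estimate the remaining integrand by $N\epsilon^{\delta}[v]_{z',\delta/2,\delta}$ exactly as in the previous step; multiplying by $\int|D_t\zeta_\epsilon|\le N\epsilon^{-2}$, $\int|D_{x'}\zeta_\epsilon|\le N\epsilon^{-1}$, $\int|D^{2}_{x'}\zeta_\epsilon|\le N\epsilon^{-2}$ produces the factors $\epsilon^{\delta-2}$, $\epsilon^{\delta-1}$, $\epsilon^{\delta-2}$, i.e. the first displayed inequality of the lemma.

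The step I would be most careful with — though it is routine — is the moment bookkeeping: matching the degree of $\hT^1_{z_0'}v$ to $\lfloor\delta\rfloor$, checking that precisely those moments of $\zeta_\epsilon$ and of its derivatives that get used actually vanish, and remembering the anisotropic scaling in which $t$ counts with weight two, so that $D_t$ costs $\epsilon^{-2}$ rather than $\epsilon^{-1}$ and the normalization $\int t^{2}\eta=0$ is what is needed (rather than a first-moment condition in $t$). Everything else is the compact support of $\eta$ and a single application of Taylor's formula along the frozen $x^d$-slice; the computations in \cite[Chapter 3]{Kr96} are carried out in the non-partial setting, and the present argument differs only by keeping $x^d$ fixed throughout.
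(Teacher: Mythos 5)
Your overall strategy (freeze $x^d$, use the kernel's moment properties, apply Taylor's formula along the slice) is exactly the argument the paper has in mind -- its ``proof'' is only a citation of \cite[Chapter 3]{Kr96} -- and your treatment of the zero-order bound $\sup|v-\hat v^\epsilon|\le N\epsilon^\delta[v]_{z',\delta/2,\delta}$ and of the $D_t\hat v^\epsilon$ and $D^2_{x'}\hat v^\epsilon$ bounds is correct on the whole range $\delta\in[0,2]$: for those two kernels the total integrals and the first $x'$-moments do vanish, and no cancellation in time is ever needed because the time regularity encoded in the seminorm is of order $\delta/2\le 1$, so the $t$-increment is absorbed directly. (For the same reason your side remark about second moments is inaccurate but harmless: because of the shift $\eta(t+1)$, the time marginal of $\zeta$ has $\int s\,\eta(s+1)\,ds=-1$ and $\int s^2\eta(s+1)\,ds=1$, so no parabolic-order moment of $\zeta$ in $t$ actually vanishes; your proof never uses this.)

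The genuine gap is the first-derivative bound when $\delta\in(1,2]$. There you propose to ``subtract the same polynomial as above'' under $D_{x'}\zeta_\epsilon$, invoking ``the pertinent first moments,'' but the diagonal moment does not vanish: a change of variables and integration by parts give $\int\int (y^j-x_0^j)\,(D_j\zeta_\epsilon)(t_0-s,x_0'-y')\,dy'\,ds=1$, so the linear term $D_jv(s,x_0',x_0^d)(y^j-x_0^j)$ cannot be removed, and what remains is essentially a mollification of $D_jv$, which is not $O(\epsilon^{\delta-1})$. In fact, with the interpretation of $[v]_{z',\delta/2,\delta}$ for $\delta\in(1,2]$ that the paper needs later (namely $[D_{x'}v]_{z',(\delta-1)/2,\delta-1}+\ip{v}_{\delta}$), the inequality $\epsilon^{1-\delta}\sup|D_{x'}\hat v^\epsilon|\le N[v]_{z',\delta/2,\delta}$ is false: take $v=x^1$, whose seminorm vanishes while $D_1\hat v^\epsilon\equiv 1$. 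So this component cannot be established as you wrote it; the correct repair is to claim the $D_{x'}$ estimate only for $\delta\le 1$ (where your constant-subtraction argument works), and for $\delta>1$ to record instead that $D_{x'}\hat v^\epsilon$ is the partial mollification of $D_{x'}v$, giving e.g. $\sup|D_{x'}\hat v^\epsilon-D_{x'}v|\le N\epsilon^{\delta-1}[v]_{z',\delta/2,\delta}$. This does not affect the rest of the paper: in Section 5 only the $C^0$ bound and the $D_t$, $D^2_{x'}$ bounds are invoked at the exponent $1+\delta$, and your proof of those parts is sound.
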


Now we are ready to estimate $\ip{u}_{1+\delta}$.
First assume that \eqref{parabolic} holds in $\bR^{d+1}_0:=\{(t,x)\in \bR^{d+1}:t<0\}$ and $u\in C^{(1+\delta)/2,1+\delta}_{z'}(\bR^{d+1}_0)$. Define
$$
\|u\|_{\text{unif},L_{2,\text{loc}}}:=\sup_{Q_1(z_1)\subset \bR^{d+1}_0}\|u\|_{L_2(Q_1(z_1))}.
$$
We claim
\begin{equation}
                                            \label{eq2.14}
[u]_{z',(1+\delta)/2,1+\delta}\le N_2(|g|_{z',\delta/2,\delta}+|f|_{0}+\|u\|_{\text{unif},L_{2,\text{loc}}}),
\end{equation}
where $N_2=N_2(d,n,\nu,\delta,K,[A]_{z',\delta/2,\delta},[B]_{z',\delta/2,\delta})>0$.
Take $r>0$. Let $\kappa>4$ be a number to be chosen later. Denote
$$
\cP_0 u=-u_t+D_\alpha(A^{\alpha\beta}(0,x^d)D_\beta u).
$$
Then we have
$$
\cP_0 u=\Div (\fg+m)+\ff,
$$
where
$$
m_\alpha^i(z)=(A^{\alpha\beta}_{ij}(0,x^d)-A^{\alpha\beta}_{ij}(z))
D_\beta u^j,
$$
$$
\fg=(\fg_\alpha),\quad\fg_\alpha:=g_\alpha-B^\alpha u,\quad
\ff:=f-\hat B^\alpha D_\alpha u-Cu.
$$
Note that
\begin{equation}
                                            \label{eq5.53}
|m|\le N|z'|^{\delta}[A]_{z',\delta/2,\delta}|Du|_0.
\end{equation}
Moreover, since the coefficients of $\cP_0$ are independent of $z'$, we have
\[
\cP_0 \hat u^{\kappa r}=\Div (\hat{\fg}{}^{\kappa r}+\hat{m}{}^{\kappa r})+\hat{\ff}{}^{\kappa r}.
\]

Let $v\in \cH^1_2(Q_{\kappa r})$ be a weak solution to the equation
\[
\left\{
  \begin{aligned}
    \cP_0 v= \Div(\fg-\hat{\fg}{}^{\kappa r}+m-\hat{m}{}^{\kappa r})+\ff-\hat{\ff}{}^{\kappa r} \quad & \hbox{in $Q_{\kappa r}$;} \\
    v=0 \quad & \hbox{on $\partial_p Q_{\kappa r}$.}
  \end{aligned}
\right.
\]
By Lemma \ref{lem3.6}, we get
\begin{equation}
                                \label{eq3.10vp}
\|v\|_{L_2(Q_{\kappa r})}\le N\kappa r\|\fg-\hat{\fg}{}^{\kappa r}+m-\hat{m}{}^{\kappa r}\|_{L_2(Q_{\kappa r})}
+N(\kappa r)^{2}\|\ff-\hat{\ff}{}^{\kappa r}\|_{L_2(Q_{\kappa r})}.
\end{equation}
Corollary \ref{cor3.1} and a scaling argument yield
\[
|v|_{0;Q_{\kappa r/2}}\le N\kappa r |\fg-\hat{\fg}{}^{\kappa r}+m-\hat{m}{}^{\kappa r}|_{0;Q_{\kappa r}}
\]
\[
+ N(\kappa r)^2 |\ff-\hat{\ff}{}^{\kappa r}|_{0;Q_{\kappa r}}
+N(\kappa r)^{-(d+2)/2}\|v\|_{L_2(Q_{\kappa r})}.
\]
This together with \eqref{eq3.10vp}, \eqref{eq5.53} and Lemma~\ref{lem2.25} yields
\begin{equation}
                                                \label{eq3.14p}
|v|_{0;Q_{\kappa r/2}}\le N(\kappa r)^{1+\delta} ([\fg]_{z',\delta/2,\delta}+[A]_{z',\delta/2,\delta}|Du|_0+(\kappa r)^{1-\delta}|\ff|_{0}).
\end{equation}

Let $w=u-\hat{u}{}^{\kappa r}-v$. Then $w$ satisfies $\cP_0 w = 0$ in $Q_{\kappa r}$. It follows from Lemma \ref{lem3.2}, Lemma \ref{lem3.1} and a scaling that
\begin{equation*}
|w-\hT^1_{z_0'}w|_{0;Q_r} \le N r^2[w]_{z',1,2;Q_r}\le N\kappa^{-2} |w|_{0;Q_{\kappa r/2}}
\end{equation*}
With the triangle inequality, \eqref{eq3.14p} and Lemma \ref{lem2.25}, we further deduce
\begin{align}
                                            \label{eq19.09}
|w-\hT^1_{z_0'}w|_{0;Q_r} &\le N\kappa^{-2} (|v|_{0;Q_{\kappa r/2}}+|u-\hat{u}{}^{\kappa r}|_{0;Q_{\kappa r/2}})\nonumber\\
&\le N\kappa^{\delta-1}r^{1+\delta}
([\fg]_{z',\delta/2,\delta}+[A]_{z',\delta/2,\delta}|Du|_0
+(\kappa r)^{1-\delta}|\ff|_{0})\nonumber\\
&\,\,+N\kappa^{\delta-1}r^{1+\delta}[u]_{z',(1+\delta)/2,1+\delta}.
\end{align}

By Lemma~\ref{lem2.25}, we also get
\begin{align}
                                                \label{eq3.17p}
|\hat u^{\kappa r}-\hT^1_{z_0'} \hat u^{\kappa r}|_{0;Q_r}
&\le Nr^2 ([D_{x'}^2 \hat u^{\kappa r}]_{0;Q_r}+[D_t \hat u^{\kappa r}]_{0;Q_r})\\
\nonumber
&\le N\kappa^{\delta-1} r^{1+\delta}[u]_{z',(1+\delta)/2,1+\delta}.
\end{align}
Take $p=\hT^1_{z_0'}w+\hT^1_{z_0'} \hat u^{\kappa r}\in\hat\bP_1$. Then
from \eqref{eq3.14p}, \eqref{eq19.09} and \eqref{eq3.17p}, we get
\begin{align*}
|u-p|_{0;Q_r}
&\le |v|_{0;Q_r}+|\hat u^{\kappa r}-\hT^1_{z_0'} \hat u^{\kappa r}|_{0;Q_r}+|w-\hT^1_{z_0'}w|_{0;Q_r}\\
&\le N(\kappa r)^{1+\delta} ([\fg]_{z',\delta/2,\delta}+[A]_{z',\delta/2,\delta}|Du|_0+(\kappa r)^{1-\delta}|\ff|_{0})\\
&\,\,+N\kappa^{\delta-1} r^{1+\delta}[u]_{z',(1+\delta)/2,1+\delta}.
\end{align*}
By a shift of the coordinates, we have for any $z_0\in \bR^{d+1}_0$ and $r>0$,
\begin{multline}
                                \label{eq2.28p2}
r^{-1-\delta}\,\inf_{p\in\hat\bP_1}|u-p|_{0;Q_r(z_0)}\\
\le N\kappa^{1+\delta} ([\fg]_{z',\delta/2,\delta}+[A]_{z',\delta/2,\delta}|Du|_0+(\kappa r)^{1-\delta}|\ff|_{0})+
N_1\kappa^{\delta-1} [u]_{z',(1+\delta)/2,1+\delta},
\end{multline}
where $N_1=N_1(d,n,\nu)>0$.
We fix a large $\kappa=\kappa(d,n,\nu)$ such that $N_1\kappa^{\delta-1}<1/2$.
For any $r<1/\kappa$, we get from \eqref{eq2.28p2}
\begin{align*}
&r^{-1-\delta}\,\inf_{p\in\hat\bP_1}|u-p|_{0;Q_r(z_0)}\\
&\le N([\fg]_{z',\delta/2,\delta}+[A]_{z',\delta/2,\delta}|Du|_0+|\ff|_{0})+
1/2[u]_{z',(1+\delta)/2,1+\delta}.
\end{align*}
By the definition of $\ff$ and $\fg$,
\begin{align}
                                \label{eq21.28}
&r^{-1-\delta}\,\inf_{p\in\hat\bP_1}|u-p|_{0;Q_r(z_0)}\nonumber\\
&\le N([g]_{z',\delta/2,\delta}+|u|_0+[u]_{z',\delta/2,\delta}
+|Du|_0+|f|_{0})+1/2[u]_{z',(1+\delta)/2,1+\delta}\nonumber\\
&\le N_2(|g|_{z',\delta/2,\delta}+|f|_{0}+\|u\|_{\text{unif},L_{2,\text{loc}}})
+1/2[u]_{z',(1+\delta)/2,1+\delta},
\end{align}
where $N_2=N_2(d,n,\nu,\delta,K,[A]_{z',\delta/2,\delta},[B]_{z',\delta/2,\delta})>0$.
In the last inequality, we used Corollary \ref{cor3.1} and Theorem \ref{thm1}.
On the other hand, for $r\ge 1/\kappa$, clearly we have
\begin{equation*}
r^{-1-\delta}\,\inf_{p\in\hat\bP_1}|u-p|_{0;Q_r(z_0)}\le N|u|_0.
\end{equation*}
Thus, \eqref{eq21.28} holds in any case.

Now by first taking the supremum in \eqref{eq21.28} with respect to $r$ and $z_0$, and then using the equivalence of parabolic H\"older semi-norms similar to \cite[Theorem 8.5.2]{Kr96}, we obtain
\[
[u]_{z',(1+\delta)/2,1+\delta}\le N_2(|g|_{z',\delta/2,\delta}+|f|_{0}+\|u\|_{\text{unif},L_{2,\text{loc}}})
+1/2[u]_{z',(1+\delta)/2,1+\delta},
\]
which implies \eqref{eq2.14} under the assumption that $u\in C_{z'}^{(1+\delta)/2,1+\delta}(\bR^{d+1}_0)$.

Next, we localize the estimate. Let $u$ be a weak solution to \eqref{parabolic} in $Q_1$ and assume $u\in C_{z'}^{(1+\delta)/2,1+\delta}(Q_{3/4})$. We take a cutoff function $\varsigma$ such that $\varsigma=0$ in $\bR^{d+1}_0\setminus Q_{3/4}$ and $\varsigma=1$ in $Q_{1/2}$. Then $\tilde u=u\varsigma$ satisfies
$$
\cP \tilde u=\Div \tilde g+\tilde f\quad\text{in }\bR^{d+1}_0,
$$
where
\begin{align*}
\tilde g_\alpha&=g_\alpha\varsigma+A^{\alpha\beta}uD_\beta\varsigma,\\
\tilde f&=f\varsigma-u\varsigma_t-g_\alpha D_\alpha \varsigma
+\left(A^{\alpha\beta}D_\beta u+(B^\alpha+\hat B^\alpha) u\right)D_\alpha\varsigma .
\end{align*}
We then have \eqref{eq2.14} with $\tilde u,\tilde f,\tilde g$ in place of $u,f,g$. This together with Theorem \ref{thm1} and Corollary \ref{cor3.1} yields
\begin{equation}
[u]_{z',(1+\delta)/2,1+\delta;Q_{1/2}}\le N_2\left(|g|_{z',\delta/2,\delta;Q_1}+|f|_{0;Q_1}
+\|u\|_{L_2(Q_1)}\right).
                                    \label{eq2.37}
\end{equation}
Finally, we drop the assumption that $u\in C_{z'}^{(1+\delta)/2,1+\delta}(Q_{3/4})$ by using the same approximation argument as in the proof of Theorem \ref{thm1}. Combining \eqref{eq11.55} and \eqref{eq2.37}, the theorem is proved.

\mysection{Non-divergence form equations}
                    \label{sec6}

In this section, we are concerned with non-divergence scalar parabolic equations
\begin{equation}
                                                \label{eq21.28b}
P u:=-u_t+a^{\alpha\beta}D_{\alpha\beta}u+b^\alpha D_\alpha u+c u=f,
\end{equation}
The coefficients $a^{\alpha\beta}$, $b^\alpha$, and $c$ are assumed to be measurable and bounded by $K$, and
the leading coefficients $a^{\alpha\beta}$ are uniformly elliptic with ellipticity constant $\nu$:
$$
\nu|\xi|^2\le a^{\alpha\beta}\xi^\alpha\xi^\beta,\quad |a^{\alpha\beta}|\le \nu^{-1}
$$
for any $\xi\in \bR^d$.
The aim here is to prove estimates in the same spirit as in Theorems \ref{thm1} and \ref{thm2} for non-divergence form equations. We state the main results of this section as follows. Theorem \ref{thm4} improves Theorem 2.14 \cite{DongSeickK09} and Theorem 3.1 \cite{TianWang} in the case $q=d-1$.

\begin{theorem}
                            \label{thm3}
Let $a,b,c\in C_{z'}^{\text{Dini}}$ and $f\in C_{z'}^{\text{Dini}}(Q_1)$. Assume that $u\in W^{1,2}_2(Q_1)$ is a strong solution to \eqref{eq21.28b} in $Q_1$. Then we have
\begin{equation}
                                    \label{eq21.58b}
|u|_{1,2;Q_{1/2}}\le N(I[\omega_{f,z'}](1)+|f|_{0;Q_1}+\|u\|_{L_2(Q_1)}),
\end{equation}
where $N=N(d,\nu,K,\omega_{a,z'},\omega_{b,z'},\omega_{c,z'})$. Moreover, $u_t$ and $D_{xx'}u$ are continuous in $\overline{Q_{1/2}}$.
\end{theorem}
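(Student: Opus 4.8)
The plan is to mirror the strategy developed for the divergence-form system in Theorem \ref{thm1}, but now working directly with the non-divergence operator $P$ and using as the ``frozen'' model equation $P_0 u := -u_t + a^{\alpha\beta}(x^d) D_{\alpha\beta}u$, whose coefficients depend on $x^d$ alone. First I would record the analogue of Lemma \ref{lem3.2}: if $P_0 v = 0$ in $Q_1$, then $D_t^i D_{x'}^j v$ and $D_{x'} D_d v$ (equivalently the ``flux'' $a^{d\beta}D_\beta v$ together with its $z'$-derivatives) satisfy interior H\"older estimates, since $P_0(D_t^i D_{x'}^j v) = 0$ and the $x^d$-derivatives can be recovered from $v_t$ via the equation $a^{dd}D_{dd}v = v_t - \sum_{(\alpha,\beta)\neq(d,d)} a^{\alpha\beta}D_{\alpha\beta}v$, exactly as in the proof of \eqref{eq3.53}. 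The relevant $L_p$-solvability input here is Krylov's theory for non-divergence equations with coefficients VMO in $z'$ (Section \ref{sec3}), which gives $\|u\|_{W^{1,2}_p(Q_{1/2})}$ bounds.

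Next I would run the Campanato iteration on the quantities $D_{xx'}u$ and $u_t$ rather than on $Du$. Fix $z_0\in Q_{3/4}$, freeze the leading coefficients at a point $z_1'$ in the $z'$-variables, write $P_{z_1'}u = f + (a^{\alpha\beta}(z_1',x^d) - a^{\alpha\beta}(z))D_{\alpha\beta}u + (\text{lower order})$, split off a particular solution $v$ of the inhomogeneous frozen equation on $Q_R(z_0)$ with zero boundary data (controlled in $W^{1,2}_2$ by the energy estimate), and apply the H\"older decay for $w = u - v$ which solves $P_{z_1'}w = 0$. Subtracting an appropriate element of $\hat\bP_1$-type polynomial (now with a quadratic correction in $x^d$ built from $D_{dd}$ of the frozen solution) I would obtain, after averaging in $z_1'\in Q_R'(z_0)$, the excess-decay inequality
\begin{align*}
\phi_{\tau R}(z_0) &\le N_1\tau^{2\gamma}\phi_R(z_0) + N\tau^{-d-2}\bigl(\omega_{f,z'}^2(R) + \omega_{a,z'}^2(R)|D^2u|_{0;Q_R(z_0)}^2 \\
&\quad\quad + \omega_{b,z'}^2(R)|Du|_{0}^2 + \omega_{c,z'}^2(R)|u|_0^2\bigr) + N\tau^{-d-2}R^{2\gamma_1}(\cdots),
\end{align*}
where $\phi_r(z_0)$ is the mean oscillation of $(D_{xx'}u, u_t)$ over $Q_r(z_0)$. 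Iterating and summing as in \eqref{eq5.48}--\eqref{eq2.40b}, and using that the $\omega$'s are Dini together with Lemma \ref{lem4.08}, I would first bootstrap a bound on $|D^2u|_0 + |u_t|_0$ (absorbing the $\omega_{a,z'}$ term into the left-hand side over a nested sequence of subdomains via the Lebesgue lemma, exactly as in Step 1 of Theorem \ref{thm1}), and then in a second pass conclude that the mean oscillations of $D_{xx'}u$ and $u_t$ are dominated by a Dini function, hence these quantities are continuous on $\overline{Q_{1/2}}$ by the Campanato-type criterion from \cite{Sperner}. The estimate \eqref{eq21.58b} on $|u|_{1,2;Q_{1/2}}$ then follows by combining the $C^0$ bounds on $u_t, D^2u$ with an interpolation/Poincar\'e argument for the lower-order norms, plus Krylov's $W^{1,2}_p$ estimate to start the machine; the passage from smooth to general coefficients is the same mollification-and-limit argument used at the end of Step 1 of Theorem \ref{thm1}.

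The main obstacle I anticipate is the same structural one that forced the unusual ordering in Theorem \ref{thm1}: because $u$ has no a priori regularity in the $x^d$-direction beyond what the equation gives, one cannot directly estimate the full Hessian $D^2u$ in the Campanato scheme — only the ``good'' combinations $D_{xx'}u$ and $u_t$ (and thence $D_{dd}u = (a^{dd})^{-1}(u_t - \sum_{(\alpha,\beta)\neq(d,d)}a^{\alpha\beta}D_{\alpha\beta}u + \text{l.o.t.} - f)$ recovered algebraically) can be handled this way. Making the frozen-coefficient decomposition and the subtracted polynomial respect this asymmetry — in particular choosing the $x^d$-dependent quadratic term so that the homogeneous frozen solution's oscillation genuinely decays — is the delicate point, and it also explains why $D_{dd}u$ inherits only $z'$-continuity (Dini case) or $z'$-H\"older regularity (Theorem \ref{thm4}), not continuity in all variables. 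A secondary technical nuisance is that $f$ need only lie in $L_\infty$ (or a Morrey space) with a Dini modulus in $z'$, so the particular-solution estimate must be arranged to lose only a harmless power $R^{\gamma_1}$ from the $|f|_0$ term, as in \eqref{eq4.34}.
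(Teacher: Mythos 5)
Your proposal follows essentially the same route as the paper's proof: freeze the leading coefficients at $z_1'$, split $u=v+w$ with $v$ a particular solution (estimated via the energy inequality after rewriting the frozen operator in divergence form, plus the interior $W^{1,2}_p$ estimate of Lemma \ref{lem6.3} on a slightly smaller cylinder), use the regularity of the $x^d$-dependent model (Lemma \ref{lem6.4}) together with the $x^d$-quadratic correction to get oscillation decay of $(D_{xx'}u,u_t)$, average in $z_1'$, iterate and absorb over nested subdomains as in Step 1 of Theorem \ref{thm1}, and deduce continuity from the Dini decay of mean oscillations via \cite{Sperner}; lower-order terms and non-smooth coefficients are handled exactly as you indicate.

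One bookkeeping caveat. As literally written --- $v$ absorbing all of $f$ so that $P_{z_1'}w=0$, with the $|f|_{0}$ contribution ``losing a harmless power $R^{\gamma_1}$ as in \eqref{eq4.34}'' --- the scheme would not close: in the non-divergence case $f$ enters at the same order as $D^2u$, so the estimate for $v$ gains no factor of $R$, and an error of size $|f|_{0}$ at every scale is not summable over dyadic radii; it would yield only a BMO-type bound on $(D_{xx'}u,u_t)$, not the $L_\infty$ bound needed for \eqref{eq21.58b}. The fix, which your displayed excess-decay inequality already presupposes and which the paper uses, is to put only $f(z)-f(z_1',x^d)$ (together with $m$ and the lower-order differences) into the equation for $v$, leaving $P_{z_1'}w=f(z_1',x^d)$; since this right-hand side is independent of $z'$, the decay estimate for the frozen model still applies to $w$, and the per-scale error becomes $\omega_{f,z'}(R)$ --- which is precisely why Theorem \ref{thm3} assumes $f\in C^{\text{Dini}}_{z'}$, whereas in Theorem \ref{thm1} boundedness of $f$ sufficed.
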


\begin{theorem}
                            \label{thm4}
Let $\delta\in (0,1)$, $a,b,c\in C_{z'}^{\delta/2,\delta}$ and $f\in C_{z'}^{\delta/2,\delta}(Q_1)$. Assume that $u\in W^{1,2}_2(Q_1)$ is a strong solution to \eqref{eq21.28b} in $Q_1$. Then we have
\begin{equation*}
|u|_{1,2;Q_{1/2}}+[u_t]_{\delta/2,\delta;Q_{1/2}}+
[D_{xx'}u]_{\delta/2,\delta;Q_{1/2}}\le N(|f|_{z',\delta/2,\delta;Q_{1}}+\|u\|_{L_2(Q_1)}),
\end{equation*}
where $N=N(d,\delta,\nu,K,[a]_{z',\delta/2,\delta},[b]_{z',\delta/2,\delta},
[c]_{z',\delta/2,\delta})$.
\end{theorem}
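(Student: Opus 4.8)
The plan is to carry out, for the non-divergence equation \eqref{eq21.28b}, the same two-step scheme used for Theorems~\ref{thm1}--\ref{thm2}, with the pair $(u_t,D_{xx'}u)$ now playing the role that $(D_{x'}u,\hat U)$ played for divergence systems. Since $\omega_{f,z'}(R)\le N[f]_{z',\delta/2,\delta}R^{\delta}$ is a Dini function, Theorem~\ref{thm3} already yields $|u|_{1,2;Q_{1/2}}\le N(|f|_{z',\delta/2,\delta;Q_1}+\|u\|_{L_2(Q_1)})$ together with the continuity of $u_t$ and $D_{xx'}u$, so it only remains to bound $[u_t]_{\delta/2,\delta;Q_{1/2}}+[D_{xx'}u]_{\delta/2,\delta;Q_{1/2}}$. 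As in Section~\ref{sec4}, I would first assume the coefficients and $f$ to be smooth, so that $u$ is smooth and every quantity below is finite, and remove this at the end by the mollification/approximation argument from the proof of Theorem~\ref{thm1}.

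The first ingredient is the non-divergence analogue of Lemma~\ref{lem3.2} for the frozen operator $P_0 w:=-w_t+a^{\alpha\beta}(x^d)D_{\alpha\beta}w$. If $P_0 w=0$ in $Q_1$, then $D_t^iD_{x'}^j w$ again solves $P_0(\cdot)=0$ for all $i,j\ge0$; applying the interior $W^{1,2}_p$ estimate for non-divergence parabolic equations with leading coefficients measurable in $x^d$ (the non-divergence counterpart of the results recalled in Section~\ref{sec3}, originating in Krylov's work) iteratively to peel off $i$ and $j$, then invoking the parabolic Sobolev imbedding and the parabolic Poincar\'e inequality (Lemma~\ref{paraPoin}), one obtains, for $i+j\ge1$, any $q\in(1,\infty)$, and $\gamma\in(0,1)$,
\[
\|D_t^iD_{x'}^j w\|_{W^{1,2}_q(Q_{1/2})}+|D_t^iD_{x'}^j w|_{\gamma/2,\gamma;Q_{1/2}}\le N\|D^2w\|_{L_2(Q_1)} .
\]
In particular $w_t$ (itself a $P_0$-solution) and $D_{xx'}w=D(D_{x'}w)$ (the full spatial gradient of the $P_0$-solution $D_{x'}w$) lie in $C^{\gamma/2,\gamma}(Q_{1/2})$. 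The single second derivative not covered is $D_{dd}w$, but $a^{dd}(x^d)D_{dd}w=w_t-\sum_{(\alpha,\beta)\ne(d,d)}a^{\alpha\beta}(x^d)D_{\alpha\beta}w$, whose right-hand side involves only $w_t$ and components of $D_{xx'}w$; this is exactly why the pair $(w_t,D_{xx'}w)$ is ``closed'' for $P_0$, and it also explains why $D_{dd}u$ is H\"older continuous only in $z'$ in Theorem~\ref{thm4}.

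Next I would run the decay/perturbation machinery as in Sections~\ref{sec4}--\ref{sec5}. Fix $z_0$ and $0<r<R\le1/4$, and for $w$ with $P_0 w=0$ in $Q_R(z_0)$ subtract a comparison function $p(z)=c_0 t+\frac12\sum_{i,j<d}c_{ij}x^ix^j+\sum_{j<d}c_{dj}x^dx^j+\ell(x^d)$, where $c_0,c_{ij},c_{dj}$ are the $Q_R(z_0)$-averages of $w_t,D_{ij}w,D_{dj}w$ and $\ell$ is a twice-integrated profile in $x^d$ chosen so that $\tilde w:=w-p$ still satisfies $P_0\tilde w=0$ while $\tilde w_t=w_t-(w_t)_{Q_R(z_0)}$ and $D_{xx'}\tilde w=D_{xx'}w-(D_{xx'}w)_{Q_R(z_0)}$. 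Using the equation to control $\|D^2\tilde w\|_{L_2}$ by the $L_2$-oscillations of $w_t$ and $D_{xx'}w$ over $Q_R(z_0)$ and applying the H\"older bound above to $\tilde w$ then gives $\phi^w_{\tau R}(z_0)\le N_1\tau^{2\gamma}\phi^w_R(z_0)$, where $\phi^v_r(z_0):=\dashint_{Q_r(z_0)}|v_t-(v_t)_{Q_r(z_0)}|^2+|D_{xx'}v-(D_{xx'}v)_{Q_r(z_0)}|^2\,dz$. For the full operator one freezes at $z_1'\in Q_R'(z_0)$, sets $P_{z_1'}u:=-u_t+a^{\alpha\beta}(z_1',x^d)D_{\alpha\beta}u$, notes $P_{z_1'}u=f+(a^{\alpha\beta}(z_1',x^d)-a^{\alpha\beta})D_{\alpha\beta}u-b^\alpha D_\alpha u-cu$, and subtracts a double antiderivative $u_0(x^d)$ of $(a^{dd}(z_1',x^d))^{-1}f(z_1',x^d)$ — which leaves $u_t$ and $D_{xx'}u$ unchanged — to cancel the term $f(z_1',x^d)$ and thereby gain a factor $R^\delta$ from $f$. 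Splitting $u-u_0=v+w$ with $P_{z_1'}v$ equal to the remaining right-hand side and $v=0$ on $\partial_p Q_R(z_0)$, the $W^{1,2}_2$ energy estimate for $P_{z_1'}$ (the non-divergence analogue of Lemma~\ref{lem3.6}) controls $\|D^2v\|_{L_2}+\|v_t\|_{L_2}$; combining this with the decay for $w$, the triangle inequality, and averaging over $z_1'\in Q_R'(z_0)$ — which turns $\dashint\|a^{\alpha\beta}(z_1',\cdot)-a^{\alpha\beta}\|_{L_2}^2$ into $N[a]_{z',\delta/2,\delta}^2R^{2\delta}|Q_R|$ and $\|f-f(z_1',\cdot)\|_{L_2}^2$ into $N[f]_{z',\delta/2,\delta}^2R^{2\delta}|Q_R|$ — yields, for $\gamma\in(\delta,1)$,
\[
\phi_r(z_0)\le N_1\Big(\frac rR\Big)^{2\gamma}\phi_R(z_0)+N R^{2\delta}\big([f]_{z',\delta/2,\delta;Q_1}+|D^2u|_{0;Q_{1/2}}+|Du|_{0;Q_{1/2}}+|u|_{0;Q_{1/2}}\big)^2 .
\]
Since $2\delta<2\gamma$, the iteration lemma \cite[Lemma~2.1, p.~86]{Giaq83} gives $\phi_r(z_0)\le Nr^{2\delta}(\cdots)^2$ for all $r\in(0,1/4)$ and $z_0\in Q_{1/4}$; Campanato's characterization of H\"older functions (cf. \cite[Lemma~4.3]{Lieberman}) then yields $[u_t]_{\delta/2,\delta;Q_{1/4}}+[D_{xx'}u]_{\delta/2,\delta;Q_{1/4}}\le N(\cdots)$, a dilation/covering argument upgrades this to $Q_{1/2}$, Theorem~\ref{thm3} absorbs $|D^2u|_0,|Du|_0,|u|_0$, and removing the smoothness assumption by approximation finishes the proof.

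I expect the main obstacle to be the frozen-operator step: establishing that, for $P_0$-solutions with coefficients merely measurable in $x^d$, the quantities $w_t$ and $D_{xx'}w$ — but not the purely bad derivative $D_{dd}w$ — are H\"older continuous in \emph{all} the variables. This rests on the partially-VMO $W^{1,2}_p$ theory for non-divergence parabolic equations and its iterated application to $D_t^iD_{x'}^j w$, together with the parabolic Poincar\'e inequality to convert first-order into second-order $L_2$ norms. A secondary technical point is the construction of the comparison function $p$: its $x^dx^j$ terms and the twice-integrated profile $\ell(x^d)$ must be chosen so that $P_0\tilde w=0$ is preserved while the averages of $w_t$ and of \emph{all} components of $D_{xx'}w$ are normalized at once.
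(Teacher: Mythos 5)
Your scheme is the same as the paper's: freeze the coefficients at $z_1'\in Q_R'(z_0')$, split $u$ into a part $v$ solving the frozen equation with the oscillating data and zero boundary values plus a remainder $w$, get oscillation decay for $(w_t,D_{xx'}w)$ from the frozen-coefficient regularity lemma (the paper's Lemma \ref{lem6.4}), normalize by the corrector $t(w_t)_{Q_R}+\tfrac12 x^\alpha x^\beta(D_{\alpha\beta}w)_{Q_R}$ plus an $x^d$-profile, average in $z_1'$, run the iteration of \cite[Lemma 2.1, p.~86]{Giaq83} and Campanato's characterization, and absorb $|D^2u|_0$, $|Du|_0$, $|u|_0$ via Theorem \ref{thm3} and Lemma \ref{lem6.3}. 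Two of your deviations are harmless: subtracting the double antiderivative $u_0$ of $(a^{dd}(z_1',\cdot))^{-1}f(z_1',\cdot)$ so that $w$ solves the homogeneous frozen equation is equivalent to the paper's device of allowing a right-hand side $f(z_1',x^d)$, independent of $z'$, in Lemma \ref{lem6.4}; and where you invoke a boundary $W^{1,2}_2$ estimate for $v$ on the same cylinder, the paper instead solves on $Q_{2R}(z_0)$, controls $\|v\|_{L_2}$ by rewriting $P_{z_1'}$ in divergence form and applying Lemma \ref{lem3.6}, and then uses the interior estimate of Lemma \ref{lem6.3} on $Q_R(z_0)$; your shortcut needs the Cauchy--Dirichlet $W^{1,2}_2$ theory for coefficients measurable in $x^d$, which is true but not among the tools the paper develops.

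The one step that does not work as written is the treatment of the lower-order terms. You put $-b^\alpha D_\alpha u-cu$ wholesale into the right-hand side of the equation for $v$; its contribution to the mean oscillation over $Q_{\tau R}(z_0)$ is then of size $N\tau^{-d-2}(|Du|_0+|u|_0)^2$ with no factor $R^{2\delta}$, so your displayed decay inequality, which carries $R^{2\delta}$ in front of $(|Du|_0+|u|_0)^2$, does not follow, and without that factor the iteration yields only boundedness of the oscillations, not $\delta$-H\"older decay. (In the divergence-form setting of Theorem \ref{thm2} the analogous term $\ff$ comes with an extra factor of $R$ from the energy estimate because the data there sits one order below the quantity $Du$ being estimated; in the non-divergence setting $f$, $b^\alpha D_\alpha u$ and $cu$ enter at the same order as $u_t$ and $D^2u$, so there is no free gain in $R$.) The fix is what the paper does: prove the estimate first for $b=c=0$ and then apply it with the data $\ff=f-b^\alpha D_\alpha u-cu$, whose partial H\"older seminorm $[\ff]_{z',\delta/2,\delta}$ is finite thanks to Theorem \ref{thm3} (boundedness of $u_t$ and $D^2u$ makes $Du$ and $u$ H\"older in $z'$); equivalently, in your decomposition you must also subtract the frozen value of the lower-order terms at $(z_1',x^d)$, not only that of $f$. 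With that correction the rest of your argument coincides with the paper's proof.
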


\begin{remark}
From Theorems \ref{thm3} and \ref{thm4}, we also obtain the corresponding results for non-divergence form elliptic equations, as in Corollaries \ref{cor1e} and \ref{cor2e}.
\end{remark}

\begin{remark}
We consider the case when the coefficients and data are piecewise H\"older continuous. Suppose that $Q_1$ is divided into $M$ laminate sub-domains $\cD_1,\cD_2,\cdots,\cD_M$ by $M-1$ parallel hyperplanes with the common normal direction $(0,\cdots,0,1)$. Assume $a,b,c$ and $f$ are in $C^{\delta/2,\delta}(\cD_i)$ for each $i=1,2,\cdots,M$, but may have jump discontinuities across these hyperplanes. By Theorem \ref{thm4} and a covering argument, for any $\epsilon\in (0,1/2)$, $u_t$ and $D_{xx'}u$ are in $C^{\delta/2,\delta}(Q_{1-\epsilon})$. Moreover, by the same theorem, $u$ and $Du$ are in $C^{\delta/2,\delta}(Q_{1-\epsilon})$. Now restricted to each $\cD_i\cap Q_{1-\epsilon}$, since
$$
D_{dd}u=(a^{dd})^{-1}\left(f+u_t-b^\alpha D_\alpha u-cu-\sum_{(\alpha,\beta)\neq (d,d)}a^{\alpha\beta}D_{\alpha\beta}u\right),
$$
we conclude that $D_{dd}u\in C^{\delta/2,\delta}(\cD_i\cap Q_{1-\epsilon})$, and hence $u\in C^{1+\delta/2,2+\delta}(\cD_i\cap Q_{1-\epsilon})$.
It is worth noting that the $C^{1+\delta/2,2+\delta}$ norm of in each sub-domain $\cD_i\cap Q_{1-\epsilon}$ is independent the number $M$.
\end{remark}

The proofs of Theorems \ref{thm3} and \ref{thm4} follow the line of the proofs of Theorems \ref{thm1} and \ref{thm2}, and in fact are simpler. First, by using the argument in Section \ref{sec3}, we obtain from the main result of \cite{KK2} the following interior estimates, which is analogous to Corollary \ref{cor3.1}.

\begin{lemma}
                                    \label{lem6.3}
Let $q\in [2,\infty)$. Assume $a^{\alpha\beta}$ are VMO in $z'$, $u\in C^\infty_{\text{loc}}$ satisfies \eqref{eq21.28b}
in $Q_1$, where $f\in L_q(Q_1)$. Then there exists a constant $N=N(d,\nu,K,\omega_{a,z'},q)$ such that
\begin{equation*}
\|u\|_{W^{1,2}_q(Q_{1/2})}\le N(\|u\|_{L_2(Q_1)}+\|f\|_{L_q(Q_1)}).
\end{equation*}
In particular, if $q>d+2$, it holds that
$$
|u|_{(1+\gamma)/2,1+\gamma;Q_{1/2}}\le N(\|u\|_{L_2(Q_1)}+\|f\|_{L_q(Q_1)}),
$$
where $\gamma=1-(d+2)/q$.
\end{lemma}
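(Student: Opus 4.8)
The plan is to deduce Lemma \ref{lem6.3} from the $L_q$ estimate for parabolic equations with leading coefficients partially VMO in $z'$ established in \cite{KK2}, following the same localization-and-bootstrap scheme used for Lemma \ref{lem3.1} and Corollary \ref{cor3.1}; since $u$ is assumed to be smooth there is no solvability issue, and only the a priori bound has to be proved. First I would record the relevant global estimate: by \cite{KK2} there are constants $\lambda_0\ge 1$ and $N$, depending only on $d,\nu,K,\omega_{a,z'},q$, such that every $w\in W^{1,2}_q(\bR^{d+1})$ with compact support satisfies
\[
\|w_t\|_{L_q}+\|D^2w\|_{L_q}+\sqrt{\lambda}\,\|Dw\|_{L_q}+\lambda\|w\|_{L_q}\le N\|Pw-\lambda w\|_{L_q}
\]
for all $\lambda\ge\lambda_0$, where the bounded lower-order coefficients $b^\alpha,c$ are absorbed into the right-hand side after enlarging $\lambda_0$.

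Next I would localize. Fix $1/2\le r<R\le 1$ and a cutoff $\varsigma\in C^\infty_0(Q_R)$ with $\varsigma\equiv 1$ on $Q_r$ and $|\varsigma_t|+|D\varsigma|+|D^2\varsigma|\le N(R-r)^{-2}$. Then $\varsigma u$ has compact support and $P(\varsigma u)-\lambda_0(\varsigma u)$ equals $\varsigma f$ plus terms that are first order in $Du$ or zeroth order in $u$ (the latter including $\lambda_0\varsigma u$), each carrying a derivative of $\varsigma$ or a bounded coefficient. Applying the global estimate to $w=\varsigma u$, using boundedness of $a^{\alpha\beta}$, and bounding these extra terms by $N(R-r)^{-2}(\|Du\|_{L_q(Q_R)}+\|u\|_{L_q(Q_R)})$, I obtain
\[
\|u_t\|_{L_q(Q_r)}+\|D^2u\|_{L_q(Q_r)}\le N(R-r)^{-2}\big(\|Du\|_{L_q(Q_R)}+\|u\|_{L_q(Q_R)}\big)+N\|f\|_{L_q(Q_1)} .
\]
Interpolating $\|Du\|_{L_q(Q_R)}$ between $\|D^2u\|_{L_q(Q_R)}$ and $\|u\|_{L_q(Q_R)}$ and then running the standard iteration over a geometric sequence of radii (cf. \cite[Lemma 2.1, p.~86]{Giaq83}) removes the blow-up in $R-r$ and gives
\[
\|u\|_{W^{1,2}_q(Q_{1/2})}\le N\big(\|u\|_{L_q(Q_1)}+\|f\|_{L_q(Q_1)}\big) .
\]

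Finally, to replace $\|u\|_{L_q}$ by $\|u\|_{L_2}$ I would bootstrap: the displayed bound holds with $q$ replaced by any $q'\in[2,q]$ and with $Q_1,Q_{1/2}$ replaced by slightly smaller cubes, so starting from $q'=2$ and alternately invoking the $W^{1,2}_{q'}$ estimate and the parabolic Sobolev embedding $W^{1,2}_p\hookrightarrow L_{p^*}$, $1/p^*=1/p-2/(d+2)$, one reaches exponent $q$ (or $L_\infty$) in finitely many steps at the cost of shrinking the domain by a fixed amount; one last application of the $q$-estimate then yields $\|u\|_{W^{1,2}_q(Q_{1/2})}\le N(\|u\|_{L_2(Q_1)}+\|f\|_{L_q(Q_1)})$. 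When $q>d+2$, the parabolic Sobolev embedding $W^{1,2}_q(Q_{1/2})\hookrightarrow C^{(1+\gamma)/2,1+\gamma}(Q_{1/2})$ with $\gamma=1-(d+2)/q$ gives the stated H\"older bound. The only nonroutine ingredient is the partially-VMO $L_q$ estimate of \cite{KK2}; handling the commutator and lower-order terms from localization and carrying out the exponent bootstrap is standard, and that is the only place where any care is needed.
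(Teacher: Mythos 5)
Your proposal is correct and rests on the same key ingredient as the paper: the global $\lambda$-dependent $L_q$ estimate from the main result of \cite{KK2} for non-divergence operators with leading coefficients measurable in $x^d$ and VMO in $z'$ (which is why $q\ge 2$ appears), followed by localization, an exponent bootstrap via the parabolic Sobolev embedding, and the embedding $W^{1,2}_q\hookrightarrow C^{(1+\gamma)/2,1+\gamma}$ for $q>d+2$ --- exactly the scheme the paper invokes by saying the lemma is obtained ``by the argument in Section \ref{sec3}''. The only genuine difference is in how the localization is absorbed: the paper's template (the Appendix proof of Lemma \ref{lem3.1}) uses an infinite sequence of cutoffs on radii $r_k=1-2^{-k}$ with geometrically growing parameters $\lambda_k$ and a weighted summation, a device tailored to the divergence-form setting where only $\|D(\zeta u)\|_{L_p}$ is controlled and no interpolation of the gradient is available; you instead use a single cutoff on $Q_R$, interpolate $\|Du\|_{L_q(Q_R)}$ between $\|D^2u\|_{L_q(Q_R)}$ and $\|u\|_{L_q(Q_R)}$, and remove the $(R-r)^{-2}$ blow-up with the standard absorbing iteration of \cite[Lemma 2.1, p.~86]{Giaq83}. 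This is legitimate here precisely because the non-divergence estimate controls $u_t$ and $D^2u$ on the left (and the quantities being absorbed are finite since $u\in C^\infty_{\text{loc}}$), so your route is arguably the more classical and slightly simpler one for this lemma, while the paper's buys uniformity of method across the divergence and non-divergence cases; the bootstrap from $\|u\|_{L_2}$ to $\|u\|_{L_q}$ and the final H\"older embedding are the same as in Corollary \ref{cor3.1}.
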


We denote
\begin{equation}
                                    \label{eq16.1.27}
P_0(u)=-u_t+a^{\alpha\beta}D_{\alpha \beta} u.
\end{equation}
The next lemma is an analogy of Lemma \ref{lem3.2}.
\begin{lemma}
                                    \label{lem6.4}
Let $\gamma\in (0,1)$. Assume $u\in C^\infty_{\text{loc}}$ satisfies $P_0 u=f_0$ in $Q_1$, where $f_0$ and $a^{\alpha\beta}$ are independent of $t$ and $x'$. Then there exists a constant $N=N(d,\gamma,\nu)$ such that
\begin{align}
                                    \label{eq22.35}
[u_t]_{(1+\gamma)/2,1+\gamma;Q_{1/2}}&\le N\|u_t\|_{L_2(Q_1)},\\
                                    \label{eq22.36}
[D_{x'} u]_{(1+\gamma)/2,1+\gamma;Q_{1/2}}&\le N\|D_{xx'} u\|_{L_2(Q_1)}.
\end{align}
\end{lemma}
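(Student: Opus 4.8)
The plan is to differentiate the equation in the good directions $t,x^1,\dots,x^{d-1}$ and then invoke the interior estimate of Lemma~\ref{lem6.3}, closely following the proof of Lemma~\ref{lem3.2}. Since $a^{\alpha\beta}$ and $f_0$ depend on $x^d$ only, differentiating $P_0u=f_0$ in $t$ and in $x^i$ ($i=1,\dots,d-1$) gives
\[
P_0(u_t)=0,\qquad P_0(D_iu)=0\ \ (i=1,\dots,d-1),\qquad\text{in }Q_1,
\]
so $u_t$ and each $D_iu$ solve the homogeneous equation $P_0(\,\cdot\,)=0$ whose coefficients are independent of $z'$.

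For \eqref{eq22.35} I would apply Lemma~\ref{lem6.3} to $w=u_t$ with vanishing right-hand side: given $\gamma\in(0,1)$, take $q=(d+2)/(1-\gamma)>d+2$, so that the last assertion of Lemma~\ref{lem6.3} yields $|u_t|_{(1+\gamma)/2,1+\gamma;Q_{1/2}}\le N\|u_t\|_{L_2(Q_1)}$, which contains \eqref{eq22.35}.

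For \eqref{eq22.36} fix $i\in\{1,\dots,d-1\}$ and set $v=D_iu$; then $P_0v=0$, and $v-c$ solves $P_0(v-c)=0$ for every constant $c$. Since the seminorm $[\,\cdot\,]_{(1+\gamma)/2,1+\gamma}$ is unchanged under adding a constant, Lemma~\ref{lem6.3} (with $Q_{3/4}$ in place of $Q_1$, after a routine covering and scaling) gives
\[
[D_iu]_{(1+\gamma)/2,1+\gamma;Q_{1/2}}=\bigl[v-(v)_{Q_{3/4}}\bigr]_{(1+\gamma)/2,1+\gamma;Q_{1/2}}\le N\bigl\|D_iu-(D_iu)_{Q_{3/4}}\bigr\|_{L_2(Q_{3/4})}.
\]
Summing over $i$, the proof of \eqref{eq22.36} reduces to the bound $\|D_iu-(D_iu)_{Q_{3/4}}\|_{L_2(Q_{3/4})}\le N\|D_{xx'}u\|_{L_2(Q_1)}$.

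This reduction is the crux. Set $w=D_iu-(D_iu)_{Q_{3/4}}$; then $P_0w=0$ and $Dw=D(D_iu)$, every entry of which carries an $x'$-derivative, so $\|Dw\|_{L_2}\le\|D_{xx'}u\|_{L_2}$. By the parabolic Poincar\'e inequality of Lemma~\ref{paraPoin}(i),
\[
\|D_iu-(D_iu)_{Q_{3/4}}\|_{L_2(Q_{3/4})}=\|w-(w)_{Q_{3/4}}\|_{L_2(Q_{3/4})}\le N\bigl(\|Dw\|_{L_2(Q_{3/4})}+\|w_t\|_{L_2(Q_{3/4})}\bigr),
\]
so everything comes down to the interior estimate $\|w_t\|_{L_2(Q_r)}\le N(d,\nu,r,R)\|Dw\|_{L_2(Q_R)}$, for $\tfrac12\le r<R\le1$, valid for solutions of $P_0w=0$ whose coefficients depend on $x^d$ alone --- the non-divergence analogue of \eqref{eq22.22}. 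I expect this to be the main obstacle: the short energy argument that proves \eqref{eq22.22} in the divergence case (roughly, testing the equation against $u_t$ and using that $A^{\alpha\beta}$ is independent of $t$) does not survive here, since the necessary integration by parts in $x^d$ would call for the derivatives $D_da^{\alpha\beta}$, which need not exist. One must instead rely on the $L_2$ interior theory for non-divergence parabolic equations with coefficients partially VMO in $z'$, in the spirit of \cite{DK09,KK2}. Granting the estimate for $w_t$, both \eqref{eq22.35} and \eqref{eq22.36} follow in a routine way.
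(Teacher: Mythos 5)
Your treatment of \eqref{eq22.35} is correct and is the same as the paper's: $P_0(u_t)=0$ and Lemma \ref{lem6.3} with $q>d+2$ gives the bound by $\|u_t\|_{L_2(Q_1)}$. For \eqref{eq22.36}, however, there is a genuine gap at precisely the point you flag as ``the main obstacle.'' After reducing matters to $\|D_{x'}u-(D_{x'}u)_{Q_R}\|_{L_2(Q_R)}\le N\|D_{xx'}u\|_{L_2(Q_1)}$, you bring in $\|D_{x'}u_t\|_{L_2}$ through a Poincar\'e inequality and then need $\|w_t\|_{L_2(Q_r)}\le N\|Dw\|_{L_2(Q_R)}$ for solutions of the homogeneous non-divergence equation with coefficients merely measurable in $x^d$. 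The appeal to the non-divergence $L_2$/VMO theory in the spirit of \cite{KK2} (i.e.\ Lemma \ref{lem6.3}) does not deliver this: that theory bounds $w_t$ by the $L_2$ norm of $w$ itself (after subtracting a constant, or even a linear function, which also solves the equation), and converting $\inf_c\|w-c\|_{L_2}$ into $\|Dw\|_{L_2}$ over a \emph{space-time} cylinder requires control of the time oscillation of $w$, i.e.\ of $w_t$ or $D^2w$ again --- circular --- or else the divergence structure which, as you yourself observe, cannot be recovered by integrating by parts in $x^d$ since $D_d a^{\alpha\beta}$ need not exist. So the crucial estimate is left unproven, and ``granting'' it is granting the heart of the lemma.

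The paper supplies the missing device: the bi-Lipschitz change of variables $y^d=\varphi(x^d)=\int_0^{x^d}(a^{dd}(s))^{-1}\,ds$, $y'=x'$, borrowed from \cite{Dong09a}. In the $y$-coordinates, $v(t,y',y^d)=u(t,y',\varphi^{-1}(y^d))$ satisfies a uniformly nondegenerate \emph{divergence-form} equation $\hat P_0 v=\hat f$ whose coefficients are still functions of $y^d$ alone, and no derivative of $a$ is ever taken. Then $\hat P_0(D_{y'}v)=0$, and Lemma \ref{paraPoin} i) bounds the mean oscillation of $D_{y'}v$ in each parabolic cylinder by $r\|DD_{y'}v\|_{L_2}$, with no time derivative entering at all; a covering argument (the $x$- and $y$-metrics are comparable) then gives $\|D_{x'}u-(D_{x'}u)_{Q_R}\|_{L_2(Q_R)}\le N\|D_{xx'}u\|_{L_2(Q_1)}$ and hence \eqref{eq22.36} via \eqref{eq20.53}, exactly your reduction. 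If you prefer to keep your route through $\|D_{x'}u_t\|$, the same change of variables also proves the estimate you need, since $D_{y'}v$ solves a divergence-form equation and the energy estimate \eqref{eq22.22} applies; but some such step restoring the divergence structure is indispensable and is what your argument lacks.
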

\begin{proof}
Note that in $Q_1$
\begin{equation*}
P_0(D_t^i D_{x'}^j u)=0,
\end{equation*}
wherever $i+j\ge 1$.
Then \eqref{eq22.35} follows from Lemma \ref{lem6.3} applied to $u_t$. For any $1/2\le r<R\le 1$, applying the same lemma to $D_{x'}u-(D_{x'}u)_{Q_R}$ gives
\begin{equation}
                            \label{eq20.53}
[D_{x'} u]_{(1+\gamma)/2,1+\gamma;Q_{r}}\le N\|D_{x'} u-(D_{x'} u)_{Q_R}\|_{L_2(Q_R)}.
\end{equation}
To bound the right-hand side of \eqref{eq20.53},
we use an idea in \cite{Dong09a} to utilize the divergence structure of the equation after making a change of variables. Let
$$
y^d=\varphi(x^d):=\int_0^{x^d} \frac 1 {a^{dd}(s)}\,ds,\quad y^\beta=x^\beta,\,\,\beta=1,\cdots,d-1.
$$
It is easy to see that $\varphi$ is a bi-Lipschitz function and
\begin{equation*}
\delta \le y^d/x^d\le \delta^{-1},\quad D_{y^d}=a^{dd}(x^d)D_{x^d}.
\end{equation*}
Denote
$$
v(t,y',y^d)=u(t,y',\varphi^{-1}(y^d)),\quad
\hat a^{ij}(y^d)=a^{ij}(\varphi^{-1}(y^d)),
$$
$$
\hat f(y^d)=f(\varphi^{-1}(y^d)).
$$
Define a divergence form operator $\hat P_0$ by
$$
\hat P_0 v=-v_t+D_{d}\left(\frac 1 {\hat a^{dd}} D_d v\right)+\sum_{\beta=1}^{d-1} D_{\beta}\left(\frac {\hat a^{d\beta}+\hat a^{\beta d}}
{\hat a^{dd}} D_dv
\right)+\sum_{\alpha,\beta=1}^{d-1} D_{\alpha}(\hat a^{\alpha\beta}D_\beta v).
$$
Clearly, $\tilde P_0$ is uniformly nondegenerate and $v$ satisfies $\hat P_0 v=\hat f$ in some stretched cylindrical domain. Since $\hat P_0 (D_{y'}v)=0$, one can use Lemma \ref{paraPoin} i) to estimate the mean oscillation of $D_{y'}v$ in each parabolic cylinder by the integral of $D_{yy'}v$ in the same cylinder. To finish the proof of \eqref{eq22.36}, it suffices to use a covering argument bearing in mind that the metrics in $x$-coordinate and $y$-coordinate are comparable.
\end{proof}

Now we are ready to prove Theorems \ref{thm3} and \ref{thm4}.

\begin{proof}[Proof of theorem \ref{thm3}]
First we assume that $b=c=0$. Also, arguing as before we may assume that $a$ are infinitely differentiable and $u$ has bounded derivative up to fourth order in $Q_{3/4}$. We take $0<\gamma<1$.
Fix a point $z_0\in Q_{3/4}$, and take $0<r<R\le (3/4-|x_0|)/4$. Now take $z_1'\in Q_{R}'(z_0')$ and denote
$$
P_{z_1'} u=-u_t+A^{\alpha\beta}(z_1',x^d)D_{\alpha\beta}u.
$$
Then we have
$$
P_{z_1'} u=f+m,
$$
where
$$
m(z)=(a^{\alpha\beta}(z_1',x^d)-a^{\alpha\beta}(z))
D_{\alpha\beta} u.
$$

 Let $v$ be the strong solution to the equation
\[
\left\{
  \begin{aligned}
    P_{z_1'} v= f(z)-f(z_1',x^d)+m(z) \quad & \hbox{in $Q_{2R}(z_0)$;} \\
    v=0 \quad & \hbox{on $\partial_p Q_{2R}(z_0)$.}
  \end{aligned}
\right.
\]
We rewrite $P_{z_1'}$ as a divergence form operator as in Lemma \ref{lem6.4}, then use Lemma 3.2 to get
$$
\|v\|_{L_2(Q_{2R}(z_0))}\le NR^2\|f-f(z_1',x^d)+m\|_{L_2(Q_{2R}(z_0))}.
$$
This together with Lemma \ref{lem6.3} gives
\begin{equation}
\|D^2v\|_{L_2(Q_{R}(z_0))}+\|v_t\|_{L_2(Q_{R}(z_0))}
                                \label{eq22.57}
\le N\|f-f(z_1',x^d)+m\|_{L_2(Q_{2R}(z_0))}.
\end{equation}
Let $w=u-v$. Then $w$ satisfies $P_{z_1'} w = f(z_1',x^d)$ in $Q_{R}(z_0)$.
It follows from Lemma \ref{lem6.4} and a suitable scaling that
\begin{align}
&\int_{Q_r(z_0)}|D_{xx'}w-(D_{xx'}w)_{Q_r(z_0)}|^2+|w_t-(w_t)_{Q_r(z_0)}|^2\,dz\nonumber\\
&\,\le N(r/R)^{d+2+2\gamma}
\int_{Q_R(z_0)}|D_{xx'}w|^2+|w_t|^2\,dz.
                                    \label{eq23.16}
\end{align}
Define
$$
h(x^d)=\int_{-1}^{x^d}\int_{-1}^s(a^{dd}(z_1',\tau))^{-1}\left(a^{\alpha\beta}(z_1',\tau)
(D_{\alpha\beta}w)_{Q_R(z_0)}-(w_t)_{Q_R(z_0)}\right)\,d\tau\,ds,
$$
and
$$
\tilde w:=w-t(w_t)_{Q_R(z_0)}-\frac 12x^\alpha x^\beta(D_{\alpha\beta}w)_{Q_R(z_0)}+h(x^d).
$$
Then, we have
$$
D_{xx'} \tilde w=D_{xx'}w-(D_{xx'}w)_{Q_R(z_0)},\quad \tilde w_t=w_t-(w_t)_{Q_R(z_0)}.
$$
It is easily seen that $\tilde w$ also satisfies $P_{z_1'} \tilde w = 0$ in $Q_{R}(z_0)$. We substitute $w$ in \eqref{eq23.16} by $\tilde w$ to get
\begin{align}
&\int_{Q_r(z_0)}|D_{xx'}w-(D_{xx'}w)_{Q_r(z_0)}|^2+|w_t-(w_t)_{Q_r(z_0)}|^2\,dz\nonumber\\
&\,\le N(r/R)^{d+2+2\gamma}
\int_{Q_R(z_0)}|D_{xx'}w-(D_{xx'}w)_{Q_R(z_0)}|^2+|w_t-(w_t)_{Q_R(z_0)}|^2\,dz.
                                    \label{eq23.17}
\end{align}
We combine \eqref{eq22.57} with \eqref{eq23.17} and use the triangle inequality to obtain
\begin{align}
&\int_{Q_r(z_0)}|D_{xx'}u-(D_{xx'}u)_{Q_r(z_0)}|^2+|D_tu-(D_tu)_{Q_r(z_0)}|^2\,dz\nonumber\\
&\,\le N_1(r/R)^{d+2+2\gamma}
\int_{Q_R(z_0)}|D_{xx'}u-(D_{xx'}u)_{Q_R(z_0)}|^2+|D_tu-(D_tu)_{Q_r(z_0)}|^2\,dz\nonumber\\
&\,\,\,+N\|f-f(z_1',x^d)+m\|^2_{L_2(Q_{2R}(z_0))},
                                    \label{eq23.48}
\end{align}
where $N_1=N_1(d,\nu)$. Now we take average of both sides of \eqref{eq23.48} with respect to $z_1'\in Q_R'(z_0')$ to get
\begin{align}
&\int_{Q_r(z_0)}|D_{xx'}u-(D_{xx'}u)_{Q_r(z_0)}|^2+|D_tu-(D_tu)_{Q_r(z_0)}|^2\,dz\nonumber\\
&\,\le N_1(r/R)^{d+2+2\gamma}
\int_{Q_R(z_0)}|D_{xx'}u-(D_{xx'}u)_{Q_R(z_0)}|^2+|D_tu-(D_tu)_{Q_r(z_0)}|^2\,dz\nonumber\\
&\,\,\,+NR^{d+2}\omega_{f,z'}^2(2R)+NR^{d+2}\omega_{a,z'}^2(2R)|D^2u|^2_{0;Q_{2R}(z_0)},
                                    \label{eq23.48bb}
\end{align}
As in Section \ref{sec4}, we immediately get \eqref{eq21.58b} from \eqref{eq23.48bb} by using an iteration argument and Lemma \ref{lem6.3}. Then the argument in Step 2 of the proof of Theorem \ref{thm1} shows the continuity of $D_{xx'}u$ and $u_t$.

In the general case, we move all the lower order terms to the right-hand side:
$$
-u_t+a^{\alpha\beta}D_{\alpha\beta}u=f-b^\alpha D_\alpha u-cu:=\ff.
$$
From the proof above, we get
$$
|u|_{1,2;Q_{1/2}}\le N(I[\omega_{\ff,z'}](1)+|\ff|_{0;Q_1}+\|u\|_{L_2(Q_1)}).
$$
To bound the first two terms on the right-hand side, it suffices to use Lemma \ref{lem6.3} and the assumptions on $b$ and $c$.
\end{proof}

\begin{proof}[Proof of Theorem \ref{thm4}]
First we assume that $b=c=0$. We fix a point $z_0\in Q_{1/4}$ and take $0<r<R\le 1/8$. Following the proof of Theorem \ref{thm3}, we take $\gamma\in (\delta,1)$. Owing to \eqref{eq23.48}, we get
\begin{align}
&\int_{Q_r(z_0)}|D_{xx'}u-(D_{xx'}u)_{Q_r(z_0)}|^2+|u_t-(u_t)_{Q_r(z_0)}|^2\,dz\nonumber\\
&\,\le N_1(r/R)^{d+2+2\gamma}
\int_{Q_R(z_0)}|D_{xx'}u-(D_{xx'}u)_{Q_R(z_0)}|^2+|u_t-(u_t)_{Q_R(z_0)}|^2\,dz\nonumber\\
&\,\,\,+N_2\left([f]_{z',\delta/2,\delta;Q_1}
+|D^2u|_{0;Q_{2R}(z_0)}\right)^2R^{d+2+2\delta},
                                    \label{eq11.16c}
\end{align}
where $N_1$ only depends on $d$ and $\nu$, and $N_2$ also depends on the $C_{z'}^{\delta/2,\delta}$ semi-norm of $a$.
Since \eqref{eq11.16c} holds for any $0<r<R\le 1/8$ and $\delta<\gamma$, by a well-known iteration argument
(see e.g., \cite[Lemma 2.1, p. 86]{Giaq83}),
\begin{align}
&\int_{Q_r(z_0)}|D_{xx'}u-(D_{xx'}u)_{Q_r(z_0)}|^2+|u_t-(u_t)_{Q_r(z_0)}|^2\,dz\nonumber\\
&\,\le N_1r^{d+2+2\delta}
\int_{Q_{1/4}(z_0)}|D_{xx'}u-(D_{xx'}u)_{Q_{1/4}(z_0)}|^2+|u_t-(u_t)_{Q_{1/4}(z_0)}|^2\,dz\nonumber\\
&\,\,\,+N_2r^{d+2+2\delta}\left([f]_{z',\delta/2,\delta;Q_1}
+|D^2u|_{0;Q_{2R}(z_0)}\right)^2.
                                    \label{eq11.39c}
\end{align}
We get from \eqref{eq11.39c}, Theorem \ref{thm3} and Lemma  \ref{lem6.3} that
\begin{align}
&\int_{Q_r(z_0)}|D_{xx'}u-(D_{xx'}u)_{Q_r(z_0)}|^2+|u_t-(u_t)_{Q_r(z_0)}|^2\,dz\nonumber\\
&\,\le N_2r^{d+2+2\delta}\left(|f|_{z',\delta/2,\delta;Q_1}+\|u\|_{L_2(Q_1)}\right)^2.
                                    \label{eq11.45c}
\end{align}
Since \eqref{eq11.45c} holds for any $r\in (0,1/8)$ and $z_0\in Q_{1/4}$, by Campanato's characterization of H\"older continuous functions, we obtain
\begin{equation*}
[D_{xx'}u]_{\delta/2,\delta;Q_{1/4}}+[u_t]_{\delta/2,\delta;Q_{1/4}}\le N_2\left(|f|_{z',\delta/2,\delta;Q_1}+\|u\|_{L_2(Q_1)}\right).
\end{equation*}
This together with a dilation and covering argument gives
\begin{equation*}
[D_{xx'}u]_{\delta/2,\delta;Q_{1/2}}+[u_t]_{\delta/2,\delta;Q_{1/2}}\le N_2\left(|f|_{z',\delta/2,\delta;Q_1}+\|u\|_{L_2(Q_1)}\right).
\end{equation*}

In the general case, we move all the lower order terms to the right-hand side and ague as before.
\end{proof}

We finish this section by proving the following ``partial'' Schauder estimates. These results  generalize Theorem 5.1 \cite{DongSeickK09} and Theorem 2.1 \cite{TianWang} for the Poisson equation.

Let us introduce a few more notation. Let $q$ be an integer such that $1\le q\le d-1$. We distinguish the first $q$ coordinates of $x$ from the rest and write $x=(\tilde x,\check x)$, where $\tilde x=(x^1,\cdots,x^q)$ and $\check x=(x^{q+1},\cdots,x^d)$. We also denote $\tilde z=(t,\tilde x)$. As in Section \ref{sec2}, we introduce the partial Dini continuous space $C_{\tilde z}^{\text{Dini}}(\cD)$ and the partial H\"older space $C_{\tilde z}^{\delta/2,\delta}$, as well as their corresponding norms.

\begin{theorem}
                            \label{thm5}
Let $a=a(x^d)$ be a measurable function of $x^d$ alone. Let $b,c\in C_{\tilde z}^{\text{Dini}}$ and $f\in C_{\tilde z}^{\text{Dini}}(Q_1)$. Assume that $u\in W^{1,2}_2(Q_1)$ is a strong solution to \eqref{eq21.28b} in $Q_1$. Then we have
\begin{equation*}
|u_t|_{0;Q_{1/2}}+|D_{x\tilde x}u|_{0;Q_{1/2}}\le N(I[\omega_{f,z'}](1)+|f|_{0;Q_1}+\|u\|_{L_2(Q_1)}),
\end{equation*}
where $N=N(d,\nu,K,\omega_{b,\tilde z},\omega_{c,\tilde z})$. Moreover, $u_t$ and $D_{x\tilde x}u$ are continuous in $\overline{Q_{1/2}}$.
\end{theorem}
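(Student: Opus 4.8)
The plan is to transcribe the proof of Theorem~\ref{thm3} essentially line by line, replacing the good variables $z'$ there by $\tilde z=(t,\tilde x)$ and the bad variable $x^d$ by $\check x=(x^{q+1},\dots,x^d)$, and exploiting one genuine simplification: since $a=a(x^d)$ there is no need to freeze the leading coefficients, so the commutator term $m$ of Theorem~\ref{thm3} never appears and the iteration carries no modulus-of-continuity term for $a$. First I would reduce to the case $b=c=0$ by moving the lower order terms to the right-hand side, writing $-u_t+a^{\alpha\beta}D_{\alpha\beta}u=\ff:=f-b^\alpha D_\alpha u-cu$. Lemma~\ref{lem6.3} applied to the original equation (its hypothesis holds because $a(x^d)$ is trivially VMO in $\tilde z$) gives $u\in W^{1,2}_p$ locally for every $p$, hence $Du\in C^{\gamma/2,\gamma}$; since a H\"older function is Dini, $\ff\in C_{\tilde z}^{\text{Dini}}$ with $I[\omega_{\ff,\tilde z}](1)+|\ff|_{0}$ controlled by the right-hand side of the asserted estimate. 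As in the proof of Theorem~\ref{thm1}, I would also first assume $a$ and $f$ smooth, removing this at the end by the same mollification-and-subtraction argument (mollifying $a$ only in $x^d$ preserves the structure $a=a(x^d)$ and the ellipticity constants).

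The one new ingredient needed is the analogue of Lemma~\ref{lem6.4}: if $P_0 w=f_0$ in $Q_1$ with $a=a(x^d)$ and $f_0$ independent of $\tilde z$, then for every $\gamma\in(0,1)$
\begin{align*}
[w_t]_{(1+\gamma)/2,1+\gamma;Q_{1/2}}&\le N\|w_t\|_{L_2(Q_1)},\\
[D_{\tilde x}w]_{(1+\gamma)/2,1+\gamma;Q_{1/2}}&\le N\|D_{x\tilde x}w\|_{L_2(Q_1)}.
\end{align*}
Since $a$ and $f_0$ are independent of $(t,\tilde x)$ one has $P_0(w_t)=0$ and $P_0(D_{\tilde x}w)=0$, so the first inequality follows from Lemma~\ref{lem6.3} (with zero data) together with bootstrapping and the Sobolev imbedding. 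For the second, applying Lemma~\ref{lem6.3} to $D_{\tilde x}w-(D_{\tilde x}w)_{Q_R}$ reduces matters to bounding $\|D_{\tilde x}w-(D_{\tilde x}w)_{Q_R}\|_{L_2(Q_R)}$ by $NR\|D_{x\tilde x}w\|_{L_2}$; I would obtain this exactly as in Lemma~\ref{lem6.4}, via the bi-Lipschitz change of variables $y^d=\varphi(x^d)=\int_0^{x^d}(a^{dd}(s))^{-1}\,ds$, $y^\beta=x^\beta$ for $\beta<d$, which turns $P_0$ into a divergence form operator $\hat P_0$ whose coefficients, being functions of $y^d$ alone, remain independent of $\tilde y=\tilde x$; hence $\hat P_0(D_{\tilde y}\hat w)=0$, and Lemma~\ref{paraPoin}(i) with vanishing right-hand side gives the Poincar\'e bound, which transfers back to the $x$-coordinates by a covering argument since the two metrics are comparable.

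With this lemma in hand the perturbation scheme of Theorem~\ref{thm3} goes through. Fixing $z_0$, small $0<r<R$, and a point $\tilde z_1$ in the $\tilde z$-cylinder of radius $R$ about $z_0$, I let $v\in\cH^1_2$ solve $P_0 v=f(z)-f(\tilde z_1,\check x)$ in $Q_R(z_0)$ with zero parabolic-boundary data and set $w=u-v$, so that $P_0 w=f(\tilde z_1,\check x)$ is independent of $\tilde z$ and, by Lemma~\ref{lem6.3} and the $L_2$ estimate (used as in the proof of Theorem~\ref{thm3}), $\|D^2 v\|_{L_2}+\|v_t\|_{L_2}$ is controlled by $\|f(\cdot)-f(\tilde z_1,\check x)\|_{L_2(Q_R(z_0))}$. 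Subtracting from $w$ the polynomial $t(w_t)_{Q_R(z_0)}+\tfrac12 x^\alpha x^\beta c_{\alpha\beta}$, where $c_{\alpha\beta}=(D_{\alpha\beta}w)_{Q_R(z_0)}$ if $\alpha$ or $\beta$ lies in $\{1,\dots,q\}$ and $c_{\alpha\beta}=0$ otherwise, produces $\tilde w$ with $\tilde w_t$ and $D_{x\tilde x}\tilde w$ of vanishing mean over $Q_R(z_0)$ and with $P_0\tilde w$ still independent of $\tilde z$. Applying the auxiliary lemma to $\tilde w$, combining with the bound on $v$ by the triangle inequality, and then averaging over $\tilde z_1$, I obtain
\begin{align*}
&\int_{Q_r(z_0)}|D_{x\tilde x}u-(D_{x\tilde x}u)_{Q_r(z_0)}|^2+|u_t-(u_t)_{Q_r(z_0)}|^2\,dz\\
&\le N_1(r/R)^{d+2+2\gamma}\int_{Q_R(z_0)}|D_{x\tilde x}u-(D_{x\tilde x}u)_{Q_R(z_0)}|^2+|u_t-(u_t)_{Q_R(z_0)}|^2\,dz\\
&\quad+NR^{d+2}\omega_{f,\tilde z}^2(2R),
\end{align*}
with $N_1=N_1(d,\nu)$ and no modulus-of-continuity term for $a$. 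Choosing $r=\tau R$ with $N_1\tau^{2\gamma}\le\tfrac12$, iterating, and invoking Lemma~\ref{lem4.08} as in Step~2 of the proof of Theorem~\ref{thm1}, the mean oscillation of $(D_{x\tilde x}u,u_t)$ on $Q_r(z_0)$ is dominated by a Dini function of $r$; Campanato's criterion then gives the uniform continuity of $D_{x\tilde x}u$ and $u_t$ on $\overline{Q_{1/2}}$, while summing the telescoping increments of the averages (using Lemma~\ref{lem6.3} to bound $\|D_{x\tilde x}u\|_{L_2}$ and $\|u_t\|_{L_2}$ by $\|u\|_{L_2(Q_1)}+\|f\|_{L_p(Q_1)}$) gives the asserted $L_\infty$ estimate. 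The general lower order terms and the non-smooth leading coefficient are then restored exactly as in the proof of Theorem~\ref{thm3}.

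The step I expect to be the main obstacle is the auxiliary lemma, specifically obtaining $[D_{\tilde x}w]_{(1+\gamma)/2,1+\gamma}\le N\|D_{x\tilde x}w\|_{L_2}$ rather than the weaker bound by $\|D^2 w\|_{L_2}$: a direct parabolic Poincar\'e inequality applied to $D_{\tilde x}w$ costs a factor $\|D_{\tilde x}w_t\|_{L_2}$, a third-order quantity that is not controlled by $\|D_{x\tilde x}w\|_{L_2}$. The change of variables in $x^d$ converting $P_0$ to divergence form removes this obstruction; it is available precisely because $a^{dd}=a^{dd}(x^d)$, and it works precisely because the transformed coefficients stay independent of $\tilde x$, so that $\hat P_0(D_{\tilde x}\hat w)=0$ and the equation-based Poincar\'e inequality~\eqref{eq22.58} with zero right-hand side applies. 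Everything else is a routine transcription of the arguments in Sections~\ref{sec4} and~\ref{sec6}.
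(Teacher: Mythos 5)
Your proposal is correct and follows essentially the same route as the paper: the partial analogue of Lemma \ref{lem6.4} (using $a=a(x^d)$ and the change of variables $y^d=\varphi(x^d)$ so that $\hat P_0(D_{\tilde x}\cdot)=0$), the comparison solution $v$ with $P_{0} v=f(z)-f(\tilde z_1,\check x)$, averaging in $\tilde z_1$, and the standard iteration, with the welcome observation (also implicit in the paper) that no commutator term $m$ appears since the leading coefficients need not be frozen. The only cosmetic difference is that the paper solves for $v$ on $Q_{2R}(z_0)$ and estimates $D^2v,v_t$ on $Q_R(z_0)$ via the interior estimate of Lemma \ref{lem6.3}, which you should likewise do when making the cylinders precise.
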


\begin{theorem}
                            \label{thm6}
Let $\delta\in (0,1)$, $a=a(x^d)$ be a measurable function of $x^d$ alone. Let $b,c\in C_{\tilde z}^{\delta/2,\delta}$ and $f\in C_{\tilde z}^{\delta/2,\delta}(Q_1)$. Assume that $u\in W^{1,2}_2(Q_1)$ is a strong solution to \eqref{eq21.28b} in $Q_1$. Then we have
\begin{equation*}
|u_t|_{\delta/2,\delta;Q_{1/2}}+
|D_{x\tilde x}u|_{\delta/2,\delta;Q_{1/2}}\le N(|f|_{\tilde z,\delta/2,\delta;Q_{1}}+\|u\|_{L_2(Q_1)}),
\end{equation*}
where $N=N(d,\delta,\nu,K,[b]_{\tilde z,\delta/2,\delta},
[c]_{\tilde z,\delta/2,\delta})$.
\end{theorem}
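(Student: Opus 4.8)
The plan is to repeat the scheme of the proofs of Theorems \ref{thm3} and \ref{thm4}, now with $\tilde z=(t,\tilde x)$ and $\check x$ in the roles previously played by $z'$ and $x^d$. The argument is in fact slightly shorter, since the leading coefficient $a=a(x^d)$ already depends only on the ``bad'' variable $x^d$: upon localizing there is nothing to freeze in $\tilde z$, so the analogue of the term $m$ from the proof of Theorem \ref{thm3} is absent. As there, I would first move $b^\alpha D_\alpha u+cu$ to the right-hand side and use Lemma \ref{lem6.3} (whose hypotheses hold since $a(x^d)$ is trivially VMO in $z'$) together with the hypotheses on $b$ and $c$ to reduce to the case $b=c=0$, i.e. $-u_t+a^{\alpha\beta}(x^d)D_{\alpha\beta}u=f$; and by the approximation argument of Theorem \ref{thm1} I may assume $a$ smooth and $u$ smooth with bounded derivatives up to fourth order in $Q_{3/4}$, so that $|u_t|_0+|D^2u|_0<\infty$ there.

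The core is a decomposition estimate around a point $z_0\in Q_{1/4}$ at two scales $0<r<R\le 1/8$. I would split $f=\big(f(z)-f(\tilde z_0,\check x)\big)+f(\tilde z_0,\check x)$, let $v$ solve $Pv=f(z)-f(\tilde z_0,\check x)$ in $Q_{2R}(z_0)$ with zero parabolic boundary data, and set $w=u-v$, so that $Pw=f(\tilde z_0,\check x)$ in $Q_{2R}(z_0)$ -- an equation whose coefficient and right-hand side are both independent of $\tilde z$. Rewriting $P$ in divergence form via the bi-Lipschitz change of variables $y^d=\int_0^{x^d}(a^{dd}(s))^{-1}\,ds$, $y^\beta=x^\beta$ ($\beta<d$) exactly as in Lemma \ref{lem6.4}, then applying Lemma \ref{lem3.6} followed by Lemma \ref{lem6.3}, yields $\|D^2v\|_{L_2(Q_R(z_0))}+\|v_t\|_{L_2(Q_R(z_0))}\le N\|f(z)-f(\tilde z_0,\check x)\|_{L_2(Q_{2R}(z_0))}$, which is $\le NR^{(d+2)/2}\omega_{f,\tilde z}(2R)$, and $\le NR^{(d+2)/2+\delta}[f]_{\tilde z,\delta/2,\delta}$ in the H\"older case.

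The second ingredient is the analogue of Lemma \ref{lem6.4}: if $P_0w=f_0$ with $a=a(x^d)$ and $f_0$ independent of $\tilde z$, then for any $\gamma\in(0,1)$ one has $[w_t]_{(1+\gamma)/2,1+\gamma;Q_{1/2}}\le N\|w_t\|_{L_2(Q_1)}$ and $[D_{\tilde x}w]_{(1+\gamma)/2,1+\gamma;Q_{1/2}}\le N\|D_{x\tilde x}w\|_{L_2(Q_1)}$. Indeed $P_0$ commutes with $D_t$ and with $D_{x^k}$ for $k\le q$, so $w_t$ and the functions $D_{x^k}w$ with $k\le q$ again satisfy the homogeneous model equation; the first bound is then Lemma \ref{lem6.3} applied to $w_t$, and for the second I would apply Lemma \ref{lem6.3} to $D_{x^k}w-(D_{x^k}w)_{Q_R}$ and then, exactly as in Lemma \ref{lem6.4}, pass to the $y$-coordinates, in which $D_{\tilde y}v$ solves a homogeneous divergence-form equation, so that Lemma \ref{paraPoin} i) (with $g=f=0$) bounds the mean oscillation of $D_{\tilde y}v$ by $\|D_{y\tilde y}v\|_{L_2}$ on each cylinder -- a bound that returns to the $x$-coordinates since the two metrics are comparable. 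Combining this with the bound on $v$, and subtracting from $w$ an auxiliary function of the form $t(w_t)_{Q_R(z_0)}+p(x)+h(\check x)$ -- with $p$ a second-order polynomial in $x$ whose $D_{x\tilde x}$ second derivatives equal $(D_{x\tilde x}w)_{Q_R(z_0)}$ and $h$ depending only on $\check x$, chosen so that the resulting $\tilde w$ satisfies $P_0\tilde w=0$ (possible because $a$ and $f_0$ depend only on $\check x$, so the defining equation for $h$ is elliptic in the $\check x$-variables) while $D_{x\tilde x}\tilde w=D_{x\tilde x}w-(D_{x\tilde x}w)_{Q_R(z_0)}$ and $\tilde w_t=w_t-(w_t)_{Q_R(z_0)}$ -- exactly as in the proof of Theorem \ref{thm3}, I would reach the decay inequality for $\int_{Q_r(z_0)}\big(|D_{x\tilde x}u-(D_{x\tilde x}u)_{Q_r(z_0)}|^2+|u_t-(u_t)_{Q_r(z_0)}|^2\big)\,dz$ in powers of $r/R$, with error term $NR^{d+2}\omega_{f,\tilde z}^2(2R)$, resp. $N_2R^{d+2+2\delta}[f]_{\tilde z,\delta/2,\delta;Q_1}^2$ in the H\"older case -- in complete parallel with \eqref{eq23.48bb}, resp. \eqref{eq11.16c}, but with the $|D^2u|_0$ term now absent.

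From here the conclusions follow as before. In the Dini case, the Campanato iteration of Step 1 of Theorem \ref{thm1} together with Lemma \ref{lem6.3} yields the bound on $|u_t|_0+|D_{x\tilde x}u|_0$, and Step 2 of that proof together with Lemma \ref{lem4.08} yields the continuity of $u_t$ and $D_{x\tilde x}u$ in $\overline{Q_{1/2}}$. In the H\"older case, the iteration of \cite[Lemma 2.1, p. 86]{Giaq83} together with Lemma \ref{lem6.3} gives the $r^{d+2+2\delta}$ decay of the left-hand side above, and Campanato's characterization of H\"older continuity, followed by a dilation and covering argument, gives Theorem \ref{thm6}; reinstating the lower-order terms as in the proof of Theorem \ref{thm3} completes both proofs. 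The step I expect to be the main obstacle is the second half of the Lemma \ref{lem6.4}-analogue -- controlling $\|D_{\tilde x}w-(D_{\tilde x}w)_{Q_R}\|_{L_2}$ by $\|D_{x\tilde x}w\|_{L_2}$ rather than by the full $W^{1,2}_2$ norm of $w$ -- which is what forces the passage to divergence form in the $x^d$-variable and the use of Lemma \ref{paraPoin} i), together with the verification that the auxiliary function subtracted in the iteration can indeed be chosen with $P_0\tilde w=0$.
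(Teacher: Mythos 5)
Your proposal follows essentially the same route as the paper's proof: freeze $f$ in $\tilde z$, estimate $v$ via the energy estimate (Lemma \ref{lem3.6}) and Lemma \ref{lem6.3}, apply the $\tilde z$-analogue of Lemma \ref{lem6.4} (obtained exactly as there, via the divergence-form rewriting in $y^d$ and Lemma \ref{paraPoin} i)) to $w=u-v$ after subtracting an affine-in-$\tilde z$ correction, and conclude with the Campanato iteration, including the correct observation that the coefficient-freezing term $m$ and hence the $|D^2u|_0$ error are absent since $a=a(x^d)$. Two minor remarks: you need not choose $h(\check x)$ so that $P_0\tilde w=0$ (which for $d-q\ge 2$ would require a genuine solvability result for an elliptic equation with coefficients merely measurable in $x^d$) — since $a$ and $D_{\alpha\beta}p$ are independent of $\tilde z$, the right-hand side of the equation for $\tilde w$ is automatically a function of $\check x$ alone (one may take $h\equiv 0$), which is all the lemma needs; and freezing $f$ at the center $\tilde z_0$ instead of averaging over $\tilde z_1\in Q_R'$ as the paper does is harmless for Theorem \ref{thm6}, because the partial H\"older semi-norm is pointwise (the averaging matters only for the mean-type Dini modulus in Theorem \ref{thm5}).
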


For the proofs of Theorems \ref{thm5} and \ref{thm6}, first we note that similar to Lemma \ref{lem6.4} if $f_0$ is independent of $\tilde z$, then \eqref{eq22.35} still holds and we have
\begin{equation*}
[D_{\tilde x} u]_{(1+\gamma)/2,1+\gamma;Q_{1/2}}\le N\|D_{x\tilde x} u\|_{L_2(Q_1)}.
\end{equation*}
Following the proof of Theorem \ref{thm3}, first we assume there is no lower order terms. Let $v$ be the solution of
\[
\left\{
  \begin{aligned}
    P_0 v= f(z)-f(\tilde z_1,\check x) \quad & \hbox{in $Q_{2R}(z_0)$;} \\
    v=0 \quad & \hbox{on $\partial_p Q_{2R}(z_0)$,}
  \end{aligned}
\right.
\]
where $P_0$ is defined in \eqref{eq16.1.27}.
Then as before we have
\begin{equation}
\|D^2v\|_{L_2(Q_{R}(z_0))}+\|v_t\|_{L_2(Q_{R}(z_0))}
                                \label{eq22.57k}
\le N\|f-f(\tilde z_1,\check x)\|_{L_2(Q_{2R}(z_0))}.
\end{equation}
Moreover, $w=u-v$ satisfies
\begin{align}
&\int_{Q_r(z_0)}|D_{x\tilde x}w-(D_{x\tilde x}w)_{Q_r(z_0)}|^2+|w_t-(w_t)_{Q_r(z_0)}|^2\,dz\nonumber\\
&\,\le N(r/R)^{d+2+2\gamma}
\int_{Q_R(z_0)}|D_{x\tilde x}w-(D_{x\tilde x}w)_{Q_R(z_0)}|^2+|w_t-(w_t)_{Q_R(z_0)}|^2\,dz.
                                    \label{eq23.17k}
\end{align}
Combining \eqref{eq22.57k} with \eqref{eq23.17k} and taking average in $\tilde z_1$ gives
\begin{align*}
&\int_{Q_r(z_0)}|D_{x\tilde x}u-(D_{x\tilde x}u)_{Q_r(z_0)}|^2+|D_tu-(D_tu)_{Q_r(z_0)}|^2\,dz\nonumber\\
&\,\le N_1(r/R)^{d+2+2\gamma}
\int_{Q_R(z_0)}|D_{x\tilde x}u-(D_{x\tilde x}u)_{Q_R(z_0)}|^2+|D_tu-(D_tu)_{Q_r(z_0)}|^2\,dz\nonumber\\
&\,\,\,+N\omega_{f,\tilde z}^2(2R)R^{d+2}.
\end{align*}
Now a standard iteration argument finishes the proof.

We remark in Theorems \ref{thm5} and \ref{thm6} 
if $b,c$ and $f$ are assumed to be regular only with respect to $\tilde x$, we can still get the estimate of $D_{x\tilde x}u$ by dropping the $u_t$ terms in \eqref{eq22.57k} and \eqref{eq23.17k} and replacing $f(\tilde z_1,\check x)$ by $f(t,\tilde x_1,\check x)$.

\mysection{Appendix}

In the appendix, we give a proof of Lemma \ref{lem3.1}. Let $\lambda_0$ be the constant in Theorem 2.2 of \cite{DK09}. Let
$$
r_k = 1-2^{-k},\quad Q_k = (-r^2_k, 0) \times B_{r_k},
\quad
k = 1, 2, \cdots.
$$
Then we find $\zeta_k(t,x) \in C_0^{\infty}(\bR^{d+1})$ such that
$$
\zeta_k
= \left\{\begin{aligned}
1 \quad &\text{on} \quad Q_k\\
0 \quad &\text{on} \quad \bR^{d+1} \setminus (-r_{k+1}^2, r_{k+1}^2) \times B_{r_{k+1}}
\end{aligned}\right.
$$
and
$$
| D\zeta_k | \le N 2^{k},
\quad
| (\zeta_k)_t | \le N 2^{2k},
\quad
| D^2 \zeta_k | \le N 2^{2k}.
$$
Observe that, for $\lambda_k\ge \lambda_0$,
$$
(\cP_0 - \lambda_k) (\zeta_k u)
= \Div g_k + f_k
\quad \text{in}
\quad
\bR^{d+1}_0,
$$
where
$$
g_k = \left({g_k}_{\alpha} \right),
\quad
{g_k}_{\alpha}= \zeta_k g_{\alpha} + \sum_{\beta=1}^d A^{\alpha\beta} uD_{\beta}\zeta_k,
$$
$$
f_k = \zeta_k f - \sum_{\alpha=1}^d g_{\alpha}D_{\alpha} \zeta_k + \sum_{\alpha,\beta=1}^d A^{\alpha\beta}D_{\beta}uD_{\alpha} \zeta_k
-uD_t \zeta_k - \lambda_k u\zeta_k .
$$
Then by Theorem 2.2 of \cite{DK09},
\begin{align*}
&\| D \left(\zeta_k u \right) \|_{L_p(\bR^{d+1}_0)}
\le N \left( \| g_k \|_{L_p(\bR^{d+1}_0)}
+ \lambda_k^{-1/2} \| f_k \|_{L_p(\bR^{d+1}_0)} \right)\\
&\le N \left(2^k + \lambda_k^{-1/2} 2^{2k} + \lambda_k^{1/2} \right) \|u\|_{L_p(Q_1)}
+ N \lambda_k^{-1/2}  \| f \|_{L_p(Q_1)}\\
&\quad+ N \left( 1 + \lambda_k^{-1/2} 2^k \right) \| g \|_{L_p(Q_1)}
+ N \lambda_k^{-1/2} 2^k\| D \left( \zeta_{k+1} u \right) \|_{L_p(\bR^{d+1}_0)}.
\end{align*}
Set
$$
\cA_k = \| D \left(\zeta_k u\right)\|_{L_p(\bR^{d+1}_0)},
\,\,
\cB = \| u \|_{L_p(Q_1)},
\,\,
\cG = \| g \|_{L_p(Q_1)},
\,\,
\cF = \| f \|_{L_p(Q_1)}.
$$
Then
\begin{align*}
\cA_k
&\le N \left( 2^{k} + \lambda_k^{-1/2} 2^{2k} + \lambda_k^{1/2} \right) \cB\\
&\quad+ N \left(1 + \lambda_k^{-1/2} 2^{k} \right) \cG
+ N \lambda_k^{-1/2}\cF
+ N \lambda_k^{-1/2}2^{k}\cA_{k+1}.
\end{align*}
By multiplying $\varepsilon^k$ both sides and summing up with respect to $k$, we have
\begin{multline*}
\sum_{k=1}^{\infty} \varepsilon^k \cA_k
\le N \cB \sum_{k=1}^{\infty}\left(2^{k} + \lambda_k^{-1/2}2^{2k} + \lambda_k^{1/2} \right) \varepsilon^k\\
+ N \cG \sum_{k=1}^{\infty} \left( 1 + \lambda_k^{-1/2}2^{k} \right) \varepsilon^k
+ N \cF \sum_{k=1}^{\infty} \lambda_k^{-1/2}\varepsilon^k
+ N_1 \sum_{k=1}^{\infty} \lambda_k^{-1/2}(2\varepsilon)^k  \cA_{k+1}.
\end{multline*}
We may certainly assume $N_1\ge \lambda_0^{1/2}$.
Now set
$$
\varepsilon = 1/4,\quad \lambda_k^{1/2} = N_12^{k+2}.
$$
Then
\begin{align*}
&\sum_{k=1}^{\infty}\left( 2^{k} + \lambda_k^{-1/2}2^{2k} + \lambda_k^{1/2} \right) \varepsilon^k\le N,\quad
\sum_{k=1}^{\infty} \left( 1 + \lambda_k^{-1/2}2^{k} \right) \varepsilon^k
\le N,\\
&\sum_{k=1}^{\infty} \lambda_k^{-1/2}\varepsilon^k
\le N,\quad
N_1 \sum_{k=1}^{\infty} \lambda_k^{-1/2}(2\varepsilon)^k  \cA_{k+1}
= \sum_{k=1}^{\infty} \varepsilon^{k+1} \cA_{k+1}
= \sum_{k=2}^{\infty} \varepsilon^{k} \cA_{k}.
\end{align*}
Therefore,
\begin{equation}							\label{eq_001}
\sum_{k=1}^{\infty} \varepsilon^k \cA_k
\le N(\cB + \cG +\cF) + \sum_{k=2}^{\infty} \varepsilon^k \cA_k.
\end{equation}
On the other hand the summations above are finite because
$$
\cA_k \le N 2^{k}\|u\|_{L_p(Q_1)} + N \|D u\|_{L_p(Q_1)}.
$$
Then the inequality \eqref{eq_001} implies that
$$
\| D u \|_{L_p(Q_{1/2})} \le \cA_1 \le N (\cB+\cG + \cF)
= N(\| u \|_{L_p(Q_1)} +\| g \|_{L_p(Q_1)} +\| f \|_{L_p(Q_1)}).
$$
Finally, the estimate of $\|u_t\|_{\bH^{-1}_p(Q_{1/2})}$ follows from the above estimate and
the equation \eqref{parabolic} itself. The lemma is proved.

\section*{Acknowledgement}
The author is grateful to Nicolai V. Krylov, Yanyan Li, and Mikhail V. Safonov for their interests in this work and helpful comments. He would also like to thank the referee for a very careful reading of the manuscript and many useful comments.


\end{document}